\newcommand{\bs}{\boldsymbol}
\newcommand{\mrm}{\mathrm}
\def\cequiv{\raisebox{-1.5mm}{$\;\stackrel{\raisebox{-3.9mm}{=}}{{\sim}}\;$}}
\def\mtequiv{\raisebox{-1.5mm}{$\;\stackrel{\raisebox{-3.9mm}{[3]}}{=}\;$}}
\def\utau{\undertilde{\tau}}
\def\uphi{\undertilde{\varphi}}
\def\upsi{\undertilde{\psi}}
\def\uG{\undertilde{G}}
\def\uH{\undertilde{H}}
\def\uS{\undertilde{S}}
\def\uP{\undertilde{P}}
\def\uw{\undertilde{w}}
\def\uv{\undertilde{v}}
\def\uL{\undertilde{L}}
\def\rot{{\rm rot}}
\def\curl{{\rm curl}}
\def\dv{{\rm div}}
\def\llt{\llbracket}
\def\rrt{\rrbracket}
\def\vgm12{\bs{V}^{1+,2}_{\gamma,M}}
\newtheorem{theorem}{Theorem}
\newtheorem{remark}[theorem]{Remark}
\newtheorem{lemma}[theorem]{Lemma}
\newcounter{mnote}
\let\oldmarginpar\marginpar
\renewcommand\marginpar[1]{\-\oldmarginpar[\raggedleft\footnotesize #1]%
  {\raggedright\footnotesize #1}}
\begin{document}

\title{On optimal finite element schemes for biharmonic equation}
\author{Shuo Zhang}
\address{LSEC, Institute of Computational Mathematics and Scientific/Engineering Computing, Academy of Mathematics and System Sciences, Chinese Academy of Sciences, Beijing 100190, People's Republic of China}
\email{szhang@lsec.cc.ac.cn}
\thanks{The author is supported partially by the National Natural Science Foundation of China with Grant No. 11471026 and National Centre for Mathematics and Interdisciplinary Sciences, Chinese Academy of Sciences.}

\subjclass[2000]{31A30, 65N30}

\keywords{optimal finite element scheme; biharmonic equation; discretized Stokes complex}

\begin{abstract} 
In this paper, two nonconforming finite element schemes that use piecewise cubic and piecewise quartic polynomials respectively are constructed for the planar biharmonic equation with optimal convergence rates on general shape-regular triangulations. Therefore, it is proved that optimal finite element schemes of arbitrary order for planar biharmonic equation can be constructed on general shape-regular triangulations. 
\end{abstract}

\maketitle

\section{Introduction}

In the study of qualitative and numerical analysis of partial differential equations and, in general, of approximation theory, we are often interested in the approximation of functions in Sobolev spaces by piecewise polynomials defined on a partition of the domain. In order for simpler interior structure, lower-degree polynomials are often expected to be used with respect to the same convergence rate. When finite element spaces that consist of polynomials of ${\bf k}$-th degree are used for discretizing $H^{\bf m}$ elliptic problems, the convergence rate in energy norm can generally not be expected higher than $\mathcal{O}(h^{k+1-m})$ for general grids, and finite element schemes that possess convergence rate $\mathcal{O}(h^{k+1-m})$ are called {\bf optimal}. It is of theoretical and practical interests to construct {optimal} finite element schemes, and this paper is devoted to this problem. 
~\\

Optimal schemes have been well studied for lowest-order cases. On one hand, for the lowest-order ($H^1$) elliptic problems,  the standard Lagrangian conforming elements can provide optimal approximation on simplicial grids of arbitrary dimension. Optimal nonconforming element spaces of $k$-th degrees are also constructed; we refer to, e.g., \cite{Crouzeix.M;Raviart.P1973},  \cite{Fortin.M;Soulie.M1983} and \cite{Crouzeix.M;Falk.R1989} for the cases $k=1$, $k=2$ and $k=3$, respectively, and to \cite{Baran.A;Stoyan.G2007} for general $k$. On the other hand, for general $H^m$ elliptic problems,  minimal-degree approximations have been studied. Specifically, when the subdivision consists of simplexes, a systematic family of nonconforming finite elements has been proposed by  \cite{Wang.M;Xu.J2013} for $H^m$ elliptic partial differential equations in $R^n$ for any $n\geqslant m$ with polynomials with degree $m$. Known as Wang-Xu family or Morley-Wang-Xu family, the elements have been playing bigger and bigger role in numerical analysis. The elements are constructed based on the perfect matching between the dimension of $m$-th degree polynomials and the dimension of $(n-k)$-subsimplexes with $1\leqslant k\leqslant m$. The generalization to the cases $n<m$ is attracting more and more research interests; c.f., e.g., \cite{Wu.S;Xu.J2017}. The minimal conforming element spaces have been constructed for $\mathbb{R}^n$ rectangle grids by \cite{Hu.J;Zhang.Sy2015}, where $Q_m$ polynomials are used for $H^m$ problems; for these spaces, composite grids are used. Besides,  constructions of finite element functions that does not depend on a cell-by-cell definition can be found in \cite{Park.C;Sheen.D2003,Hu.J;Shi.Z2005,Zhang.S2017axv}, where minimal-degree finite element spaces are defined on general quadrilateral grids for $H^1$ and $H^2$ problems. For these schemes,  the {\it finite element spaces} are defined globally and do not necessarily correspond to a {\it finite element} defined by Ciarlet's triple \cite{Ciarlet.P1978}. 
~\\

Higher-order cases are more complicated, even for the planar biharmonic problem. It is known that with polynomials of degrees $k\geqslant 5$, spaces of $\mathcal{C}^1$ continuous piecewise polynomials can be constructed with local basis, and they provide optimal approximations to the $H^2$ functions with sufficient smoothness\cite{Argyris.J;Fried.I;Scharpf.D1968,Zenisek.A1974,Morgan.J;Scott.R1975,Zenisek.A1970,deBoor.C;Hollig.K1988}. With polynomials of degrees $2\leqslant k\leqslant 4$, spaces of $\mathcal{C}^1$ continuous piecewise polynomials can be proved to provide optimal approximation when the triangulation is of some special structures, such as the Powell-Sabin triangulation\cite{Powell.M;Sabin.M1977}, Hsieh-Clough-Tocher triangulation\cite{Clough.R;Tocher.J1965}, Sander-Veubeke triangulation\cite{Sander.G1964,deVeubeke.BF1968}. The restrictions on the grids can be weakened, but are generally needed on at least part of the triangulation\cite{Nurnberger.G;Zeilfelder.F2004,Nurnberger.G;Schumaker.L;Zeilfelder.F2004,Chui.C;Hecklin.G;Nurnberger.G;Zeilfelder.F2008}. On general triangulations, as is proved by \cite{deBoor.C;Jia.R1993}, and illustrated by a counter example on a very regular triangulation\cite{Boor.C;DeVore.R1983,Boor.C;Hollig.K1983}, optimal approximation can not be expected with $\mathcal{C}^1$ continuous piecewise polynomials of degree $k<5$. It is illustrated in \cite{Alfeld.P;Piper.B;Schumaker.L1987} that all the basis functions can not be determined locally on general grids. In contrast, nonconforming finite element methodology works optimally for $k=2$. The Morley element \cite{Morley.L1968} uses piecewise quadratic polynomials to discretize biharmonic equation, and the convergence rate $\mathcal{O}(h)$ is proved. However,  to the author's knowledge, optimal finite element schemes, conforming or nonconforming, for planar biharmonic equation with $\mathcal{O}(h^2)$ or $\mathcal{O}(h^3)$ convergence rate, namely $m=2$ and $k=3,4$, are still absent. For biharmonic problem in higher dimensions and higher order problems, a bigger absence than a narrow gap can be expected.  
~\\

In this paper, we present two nonconforming finite element spaces $B_h^3$ and $B_h^4$, which consist of piecewise cubic and piecewise quartic polynomials, and two schemes which approximate the planar biharmonic equation with $\mathcal{O}(h^2)$ and $\mathcal{O}(h^3)$ convergence rate in energy norm on general shape-regular triangulations, respectively, and the convergence analysis, implementation and optimal solvers are discussed. As can be imagined, to control the consistency error, sufficient restrictions on interfacial continuity have to be imposed across the edges of the cells, and this way are the two finite element spaces defined. However, the constraints on the continuity are overdetermined versus local shape functions, and therefore, the globally-defined finite element spaces can not be corresponding to finite elements defined as Ciarlet's triple.  Difficulties arise thus in both theoretical analysis and practical implementation, especially on constructing local basis functions or even counting the dimension of the space which is crucial on implementation and on error estimation by interpolation. Indirect ways are adopted in this paper to avoid the difficulties, and the construction and utilization of discretized Stokes complexes are the main ingredients in the construction of the schemes. 
~\\

Discretized Stokes complexes are finite element analogues of the Stokes complex which reads 
\begin{equation}
\begin{array}{ccccccccc}
0 & \longrightarrow & H^2_0 & \xrightarrow{\bs{\mrm{\nabla}}} & (H^1_0)^2 & \xrightarrow{\mrm{rot}} & L^2_0  & \longrightarrow & 0.
\end{array}
\end{equation}
Discretized Stokes complexes are playing important roles in constructing stable finite element pairs for Stokes problem \cite{Falk.R;Neilan.M2013,John.V;Linke.A;Merdon.C;Neilan.M;Rebholz.L2017}. In this paper, the ability of the Stokes complex to figure out the relevance between the $H^2$ space and a constrained $H^1$ space is emphasized. Specifically, two discretized Stokes complexes are constructed, and the schemes for biharmonic equation are connected to that for rotated Stokes problem. This way, the original error estimation and implementation problems can be transformed to those for relevant rotated Stokes problems to solve.  Moreover, as  $B_h^3$ and $B_h^4$ are defined globally, the dimensions of the spaces are not trivially known, and the standard dimension counting technique can not be used directly for the proof of the exactness between the spaces. These two discrete Stokes complexes are constructed in an indirect way: we firstly construct two auxiliary discrete Stokes complexes, and then reduce them to desired ones, respectively. Finally, the theoretical analysis and implementation of the designed finite element schemes are done in a friendly way. Particularly, the scheme can be implemented by solving two Poisson systems and one Stokes systems, and thus optimal solvers for the generated discretizations can also be constructed under the framework of fast auxiliary space preconditioning (FASP) and by the aid of optimal Poisson solvers; see discussions in, e.g., \cite{Xu.J1996as,Hiptmair.R;Xu.J2007,Xu.J2010icm,Zhang.S;Xu.J2014,Ruesten.T;Winther.R1992,Grasedyck.L;Wang.L;Xu.J2016,Feng.C;Zhang.S2016} for relevant details.
~\\

We remark that, though indirect ways are utilized in the present paper, the finite element spaces ($B_h^3$ for example) may still admit a set of locally supported basis functions which may be used for interpolation-based technique and data-fitting-oriented problems. We also remark that, the discontinuous Galerkin (DG) method for biharmonic equation has been studied, and various optimal schemes are constructed; c.f., e.g., \cite{Brenner.S;Sung.L2005,Engel.G;Garikipati.K;Hughes.TJR;Larson.MG;Mazzei.L;Taylor.RL2002,Mozolevski.I;Suli.E;Bosing.P2007,Georgoulis.E;Houston.P2008}. In contrast to existing DG schemes, the nonconforming finite element methods presented in this paper can hint new kinds of DG methods. Moreover, there can be possibly asymptotic connections between the nonconforming finite element schemes and the induced DG schemes; the asymptotic connections can be helpful to studying the structure of the DG systems. 
~\\

The remaining of the paper is organized as follows. In the remaining part of this section, the notation used in this paper are introduced. In Section \ref{sec:fes}, some auxiliary finite elements are introduced. In Sections \ref{sec:cubic} and \ref{sec:quartic}, optimal cubic and quartic element spaces for the biharmonic equation are introduced with structural properties. Their implementation for solving biharmonic equation and convergence analysis are given. Finally, in Section \ref{sec:conc}, some concluding remarks are given. 

\subsection{Notations}

Through this paper, we use $\Omega$ for a  simply-connected polygonal domain.  We use $\nabla$, $\curl$, $\dv$, $\rot$ and $\nabla^2$ for the gradient operator, curl operator, divergence operator, rot operator and Hessian operator, respectively. As usual, we use $H^2(\Omega)$, $H^2_0(\Omega)$, $H^1(\Omega)$, $H^1_0(\Omega)$, $H(\rot,\Omega)$, $H_0(\rot,\Omega)$ and $L^2(\Omega)$ for certain Sobolev spaces, and specifically, denote  $\displaystyle L^2_0(\Omega):=\{w\in L^2(\Omega):\int_\Omega w dx=0\}$, $\undertilde{H}{}^1_0(\Omega):=(H^1_0(\Omega))^2$, $\mathring{H}_0(\rot,\Omega):=\{\utau\in H_0(\rot,\Omega):\rot\utau=0\}$, $\mathring{\uH}{}^1_0(\Omega):=\{\upsi\in \uH{}^1_0(\Omega):\rot\upsi=0\}$, and $\uH{}^{-1}(\Omega)$ and $H^{-1}(\Omega)$ the dual spaces of $\uH{}^1_0(\Omega)$ and $H^1_0(\Omega)$, respectively.  We use $``\undertilde{~}"$ for vector valued quantities in the present paper, and $\uv{}^1$ and $\uv{}^2$ for the two components of the function $\uv$. We use $(\cdot,\cdot)$ for $L^2$ inner product and $\langle\cdot,\cdot\rangle$ for the duality between a space and its dual. Without ambiguity, we use the same notation $\langle\cdot,\cdot\rangle$ for different dualities, and it can occasionally be treated as $L^2$ inner product for certain functions. We use the subscript $``\cdot_h"$ to denote the dependence on triangulation. Particularly, an operator with the subscript $``\cdot_h"$ implies the operation done cell by cell. Finally, $\lesssim$, $\gtrsim$, and $\cequiv$ respectively denote $\leqslant$, $\geqslant$, and $=$ up to a constant. The hidden constants depend on the domain, and, when triangulation is involved, they also depend on the shape-regularity of the triangulation, but they do not depend on $h$ or any other mesh parameter.

Let $\mathcal{T}_h$ be a shape-regular triangular subdivision of $\Omega$ with mesh size $h$, such that $\overline\Omega=\cup_{T\in\mathcal{T}_h}\overline T$. Denote by $\mathcal{E}_h$, $\mathcal{E}_h^i$, $\mathcal{E}_h^b$, $\mathcal{X}_h$, $\mathcal{X}_h^i$, and $\mathcal{X}_h^b$ the set of edges, interior edges, boundary edges, vertices, interior vertices and boundary vertices, respectively. For any edge $e\in\mathcal{E}_h$, denote by $\mathbf{n}_e$ and $\mathbf{t}_e$ the unit normal and tangential vectors of $e$, respectively, and denote by $\llbracket\cdot\rrbracket_e$ the jump of a given function across $e$; if particularly $e\in\mathcal{E}_h^b$, $\llbracket\cdot\rrbracket_e$ stands for the evaluation of the function on $e$. The subscript ${\cdot}_e$ can be dropped when there is no ambiguity brought in. 

Denote 
$$
\mathcal{X}_h^{b,+1}:=\{a\in\mathcal{X}_h^i,\ a\ \mbox{is\ connected\ to}\ \mathcal{X}_h^b\ by\ e\in\mathcal{E}_h^i\},\ \mbox{and}\ \ \mathcal{X}_h^{i,-1}:=\mathcal{X}_h^i\setminus\mathcal{X}_h^{b,+1};
$$ 
further, denote with $\mathcal{X}^{i,-(k-1)}_h\neq\emptyset$,
$$
\mathcal{X}_h^{b,+k}:=\{a\in\mathcal{X}_h^{i,-(k-1)},\ a\ \mbox{is\ connected\ to}\ \mathcal{X}_h^{b,+(k-1)}\ by\ e\in\mathcal{E}_h^i\}, \ \mbox{and}\ \ \mathcal{X}_h^{i,-k}:=\mathcal{X}_h^{i,-(k-1)}\setminus\mathcal{X}_h^{b,+k}.
$$
The smallest $k$ such that $\mathcal{X}_h^{i,-(k-1)}=\mathcal{X}_h^{b,+k}$ is called the number of levels of the triangulation. 

For $T$ a triangle, we use $P_k(T)$ for the set of polynomials on $K$ of degrees not higher than $k$. Similarly is $P_k(e)$ defined on an edge $e$. Define $\uP{}_k(T)=P_k(T)^2$ and similarly is $\uP{}_k(e)$ is defined.  Denote by $a_i$ and $e_i$ vertices and opposite edges of $K$, $i=1,2,3$.  The barycentre coordinates are denoted as usual by $\lambda_i$, $i=1,2,3$. Besides, denote $\Lambda=\lambda_1\lambda_2\lambda_3$. The indices of $a_i$, $e_i$ and $\lambda_i$ are labelled mod 3, namely $a_{i+3}=a_i$, and so forth. Correspondingly, we use the notation $``i\mtequiv j"$ to denote $``i\,({\rm mod}\,3)=j\,({\rm mod}\,3)"$. Denote basic finite element spaces by
\begin{itemize}
\item $\mathcal{L}^k_h:=\{w\in H^1(\Omega):w|_T\in P_k(T),\ \forall\,T\in\mathcal{T}_h\}$, $\mathcal{L}^k_{h0}:=\mathcal{L}^k\cap H^1_0(\Omega)$, $k\geqslant 1$;
\item $\mathbb{P}^{k}_h:=\{w\in L^2(\Omega):w|_T\in P_{k}(T)\}
$, $\mathbb{P}^{k}_{h0}:=\mathbb{P}^k_h\cap L^2_0(\Omega)$, $k\geqslant 0$.
\end{itemize}

\section{Some finite elements and associated finite element spaces}
\label{sec:fes}

In this section, we present some finite elements for further discussion in the sequel sections. Some of them are known, while most of them are not. 
\subsection{On stable Stokes finite element pairs with discontinuous pressure}

For $k\geqslant 1$, define
\begin{itemize}
\item $\uS{}_h^k:=\{\uv\in\uH^1(\Omega): \uv|_T\in \uP{}_k(T),\ T\in\mathcal{T}_h\}$;
\item $\uS{}^k_{h0}:=\uS{}^k_h\cap\uH{}^1_0(\Omega)$;
\item $\uG{}_h^k:=\{\uv\in\uL^2(\Omega):\int_ep_e\llt\uv^j\rrt=0,\ \forall\, p_e\in P_{k-1}(e),\ \forall\, e\in \mathcal{E}_h^i,\ j=1,2\};$
\item $\uG{}^k_{h0}:=\{\uv\in\uG{}^k_h:\int_ep_e\uv^j=0,\ \forall\,e\in \mathcal{E}_h^b\ \mbox{and}\ p_e\in P_{k-1}(e),\ j=1,2\}.$
\end{itemize}
Note that the moment-continuity of $\uG{}^k_h$ is equivalent to continuity on certain Gauss-Legendre points.  Denote $\undertilde{\mathcal{B}}{}^2_{h0}:=\{\undertilde{\phi}{}_h:(\undertilde{\phi}{}_h|_T)^j\in {\rm span}\{(\lambda_1^2+\lambda_2^2+\lambda_3^2)-2/3\}, \ j=1,2,\ \forall\,T\in\mathcal{T}\}$. Namely $\undertilde{\mathcal{B}}{}^2_{h0}$ is a second-degree Gauss-Legendre bubble space. Then $\uG{}^2_{h0}=\uS{}^2_{h0}\oplus \undertilde{\mathcal{B}}{}^2_{h0}$\cite{Fortin.M;Soulie.M1983}.  The decomposition can be generalized to even $k$\cite{Baran.A;Stoyan.G2007}. These spaces can be used for  Stokes problem. 

\begin{lemma}\label{lem:P4P3}
\cite{Guzman.J;Scott.R2018} (Stability of the Scott-Vogelius pair)\footnote{The Stability is proved firstly in \cite{Scott.L;Vogelius.M1985} where it is assumed the triangulation does not contain singular points.}
For $k\geqslant 4$, then there exists a generic constant $C$ depending on the domain and the regularity of the grid, such that 
\begin{equation}\label{eq:inf-supSV}
\sup_{\uv{}_h\in \uS{}^k_{h0},\|\nabla_h\uv{}_h\|_{0,\Omega}=1}(\dv \uv{}_h,q_h)\geqslant C\|q_h\|_{0,\Omega},\quad\forall\, q_h\in \mathbb{P}^{k-1}_{h0}. 
\end{equation}
\end{lemma}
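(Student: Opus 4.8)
The plan is to recast the inf--sup bound (\ref{eq:inf-supSV}), with a constant depending only on the domain and the shape regularity, as a surjectivity statement for the divergence and to verify it by a local-to-global construction. By the Brezzi theory, (\ref{eq:inf-supSV}) is equivalent to the assertion that $\dv:\uS^k_{h0}\to\mathbb{P}^{k-1}_{h0}$ is surjective and admits a right inverse bounded uniformly in $h$; that is, for every $q_h\in\mathbb{P}^{k-1}_{h0}$ there is $\uv_h\in\uS^k_{h0}$ with $\dv\uv_h=q_h$ and $\|\nabla_h\uv_h\|_{0,\Omega}\lesssim\|q_h\|_{0,\Omega}$. So I would establish this last statement, for which a convenient route is Fortin's criterion: it suffices to produce one operator $\Pi_h:\uH^1_0(\Omega)\to\uS^k_{h0}$ that is uniformly $H^1$-stable, $\|\nabla_h\Pi_h\uw\|_{0,\Omega}\lesssim\|\uw\|_{1,\Omega}$, and divergence-preserving in the sense that $\int_T\dv(\uw-\Pi_h\uw)\,q=0$ for all $q\in P_{k-1}(T)$ and all $T\in\mathcal{T}_h$. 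Feeding $\Pi_h$ the continuous right inverse (Bogovskii operator, i.e.\ the continuous Stokes inf--sup) $\uw\in\uH^1_0(\Omega)$ with $\dv\uw=q_h$ and $\|\uw\|_{1,\Omega}\lesssim\|q_h\|_{0,\Omega}$ then yields $\uv_h=\Pi_h\uw$ with $\dv\uv_h=q_h$, since both sides lie in $P_{k-1}(T)$ and share all moments against $P_{k-1}(T)$, and the stability of $\Pi_h$ delivers the bound.

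To construct $\Pi_h$ I would proceed in stages, matching on each cell the data that determine $\int_T\dv\uv\,q$ for $q\in P_{k-1}(T)$; after integration by parts these are the edge normal moments $\int_e(\uv\cdot\mathbf{n}_e)p$ with $p\in P_{k-1}(e)$, together with the interior moments $\int_T\uv\cdot\ugamma$ against $\ugamma=\nabla q$. First take a Scott--Zhang-type quasi-interpolant into $\uS^k_{h0}$ that is $H^1$-stable, $\uH^1(\Omega)$-conforming, and respects the homogeneous boundary values; then correct the edge normal moments on interior edges using edge-bubble velocities supported on the patch of each edge; and finally correct the remaining interior moments using interior bubbles drawn from $\Lambda\,\uP_{k-3}(T)$. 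Each correction is local and, by scaling, $H^1$-stable, so $\Pi_h$ inherits a uniform stability bound.

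The hard part is the last correction together with the coupling at vertices, and this is where $k\geqslant 4$ is forced. A dimension count shows that $\dv\bigl(\Lambda\,\uP_{k-3}(T)\bigr)$ cannot fill the whole mean-zero space $\{q\in P_{k-1}(T):\int_Tq=0\}$ when $k\leqslant 5$, so purely interior bubbles are insufficient and the construction must genuinely employ velocities spread over the star of each vertex. The local surjectivity of the divergence on a vertex patch holds for $k\geqslant 4$ except at \emph{singular} vertices --- those whose incident edges lie on two straight lines --- at which the local divergence has a one-dimensional cokernel and thus a spurious pressure mode outside the range; this is precisely why the original argument of \cite{Scott.L;Vogelius.M1985} excludes singular points, and the role of \cite{Guzman.J;Scott.R2018} is to carry out the patchwise analysis so that the resulting constant depends only on the shape regularity. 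The cleanest bookkeeping for these obstructions is the exactness of the discrete Stokes sequence $\Sigma_h\xrightarrow{\curl}\uS^k_{h0}\xrightarrow{\dv}\mathbb{P}^{k-1}_{h0}$, with $\Sigma_h$ the degree-$(k+1)$ $\mathcal{C}^1$ stream-function space: the defect of surjectivity is read off from the cokernel supported at singular vertices, and the uniform right inverse is assembled by combining the local patch inverses through a partition of unity with the continuous inf--sup that controls the piecewise-constant component of the pressure. I expect the patchwise local inf--sup with a shape-regularity-uniform constant, and the careful treatment of (nearly) singular vertices, to be the main difficulty.
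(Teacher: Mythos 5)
The paper does not actually prove this lemma: it is quoted as an external result, with the ``proof'' consisting of the citation to \cite{Guzman.J;Scott.R2018} and the footnote crediting \cite{Scott.L;Vogelius.M1985} for the non-singular case. So there is no internal argument to compare yours against line by line. That said, your sketch does follow the broad strategy of the cited literature: recast \eqref{eq:inf-supSV} as the existence of a uniformly bounded discrete right inverse of $\dv$, verify it via Fortin's criterion applied to the continuous (Bogovskii) right inverse, build the Fortin operator from a stable quasi-interpolant plus edge and interior bubble corrections, and locate the genuine obstruction at singular vertices. You also correctly identify the specific contribution of \cite{Guzman.J;Scott.R2018}, namely an inf--sup constant controlled by the shape regularity alone rather than by a measure of near-singularity.

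Two substantive caveats. First, as written the lemma is only true on triangulations with no exactly singular vertices: at such a vertex $\dv\uS{}^k_{h0}$ has a one-dimensional cokernel inside $\mathbb{P}^{k-1}_{h0}$, so \eqref{eq:inf-supSV} fails for the corresponding spurious pressure mode. The footnote acknowledges this hypothesis implicitly; your argument should state it explicitly (or restrict the pressure space accordingly), since otherwise the surjectivity claim you reduce to is simply false. Second, your parenthetical dimension count (``cannot fill \dots when $k\leqslant 5$'') is not the right diagnosis: the interior bubbles $\Lambda\,\uP{}_{k-3}(T)$ never suffice for any $k$, because $\dv$ of a $\mathcal{C}^0$ piecewise polynomial field satisfies vertex-smoothness constraints, so the image of the cell bubbles consists of mean-zero pressures vanishing at the vertices; the deficit is a patch-level phenomenon at vertices, not a cell-level dimension shortfall. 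Finally, the step you yourself flag as hard --- a patchwise local inf--sup constant that stays bounded below uniformly as a vertex degenerates toward singularity --- is precisely the technical core of \cite{Guzman.J;Scott.R2018} and is left entirely open in your sketch. As an outline of how the quoted result is established in the literature your proposal is sound; as a self-contained proof it is far from complete, but since the paper itself treats the lemma as an input rather than proving it, that is a defensible position provided the singular-vertex hypothesis is made explicit.
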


\begin{lemma}\label{lem:pGL}
For $k=1,2$ or $k\geqslant 4$, there exists a generic constant $C$ depending on the domain and the regularity of the grid, such that 
\begin{equation}\label{eq:inf-supGL}
\sup_{\uv{}_h\in \uG{}^k_{h0},\|\nabla_h\uv{}_h\|_{0,\Omega}=1}(\dv_h\uv{}_h,q_h)\geqslant C\|q_h\|_{0,\Omega},\ \ \forall\,q_h\in\mathbb{P}^{k-1}_{h0}.
\end{equation}
\end{lemma}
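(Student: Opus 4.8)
The plan is to split the statement according to the mechanism available in each range: for $k\geqslant 4$ the conforming Scott--Vogelius stability of Lemma \ref{lem:P4P3} can be inherited directly, whereas for the two low-order cases $k=1,2$ the conforming spaces are unstable and one must exploit the nonconforming structure of $\uG^k_{h0}$ through Fortin's lemma, thereby reducing \eqref{eq:inf-supGL} to the classical continuous Stokes inf-sup condition between $\uH{}^1_0(\Omega)$ and $L^2_0(\Omega)$.

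For $k\geqslant 4$ I would first record the inclusion $\uS^k_{h0}\subset\uG^k_{h0}$: any $\uv_h\in\uS^k_{h0}$ is continuous across interior edges, so all moments of its jumps vanish, and it vanishes on $\partial\Omega$, so the boundary moment conditions hold; moreover $\dv_h\uv_h=\dv\uv_h$ on this subspace since it is $\uH^1$-conforming. Hence the supremum in \eqref{eq:inf-supGL} taken over $\uG^k_{h0}$ dominates the same supremum restricted to the subset $\uS^k_{h0}$, and the latter is bounded below by $C\|q_h\|_{0,\Omega}$ precisely by Lemma \ref{lem:P4P3}. This disposes of $k\geqslant 4$ in a couple of lines.

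For $k=1,2$ I would invoke Fortin's criterion: it suffices to construct a linear operator $\Pi_h\colon\uH{}^1_0(\Omega)\to\uG^k_{h0}$ with $\|\nabla_h\Pi_h\uv\|_{0,\Omega}\lesssim\|\nabla\uv\|_{0,\Omega}$ and
$$(\dv_h\Pi_h\uv,q_h)=(\dv\uv,q_h),\qquad\forall\,q_h\in\mathbb{P}^{k-1}_{h0},$$
after which \eqref{eq:inf-supGL} follows from the continuous inf-sup condition together with the surjectivity of $\dv$ from $\uH{}^1_0(\Omega)$ onto $L^2_0(\Omega)\supset\mathbb{P}^{k-1}_{h0}$. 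The defining moment conditions of $\uG^k_{h0}$ are exactly what make such a $\Pi_h$ work: integrating by parts cell by cell and using that $\nabla q_h|_T\in\uP_{k-2}(T)$ while $q_h|_e\in P_{k-1}(e)$, the nonconforming consistency terms produced by the jump of $\uw:=\Pi_h\uv$ reduce to moments of $\llt\uw^j\rrt$ against $P_{k-1}(e)$ on interior edges and to boundary moments on $\partial\Omega$, all of which vanish by definition of $\uG^k_{h0}$. What survives is matched by requiring $\Pi_h\uv$ to reproduce the volume moments of $\uv$ against $\uP_{k-2}(T)$ and the edge moments of the normal trace against $P_{k-1}(e)$. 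For $k=1$ this is just the edge-average-preserving Crouzeix--Raviart interpolant, and for $k=2$ the explicit correction is furnished by the decomposition $\uG^2_{h0}=\uS^2_{h0}\oplus\undertilde{\mathcal{B}}{}^2_{h0}$, using the second-degree Gauss--Legendre bubbles of $\undertilde{\mathcal{B}}{}^2_{h0}$ to adjust the edge moments of the divergence without disturbing the conforming $\uS^2_{h0}$ part.

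The step I expect to be the main obstacle is the construction and uniform $H^1$-boundedness of $\Pi_h$ in the case $k=2$: one must verify that the prescribed volume- and edge-moment conditions are simultaneously solvable inside $\uG^2_{h0}$ and that the resulting operator is stable independently of $h$ under shape-regularity. This is where a scaling argument via affine equivalence to a reference triangle, combined with the local solvability guaranteed by the direct-sum decomposition, is needed. Once boundedness of $\Pi_h$ and the Fortin identity are established, the passage from the continuous inf-sup condition to \eqref{eq:inf-supGL} is entirely standard.
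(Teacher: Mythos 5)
Your argument for $k\geqslant 4$ is exactly the paper's: the inclusion $\uS{}^k_{h0}\subset\uG{}^k_{h0}$ (vanishing jumps imply vanishing jump moments, and $\dv_h=\dv$ there) lets Lemma \ref{lem:P4P3} carry over verbatim. For $k=1,2$ the paper simply cites the Crouzeix--Raviart and Fortin--Soulie references, and your Fortin-operator construction (edge-average-preserving interpolant for $k=1$, conforming interpolant corrected by the Gauss--Legendre bubbles of $\undertilde{\mathcal{B}}{}^2_{h0}$ for $k=2$) is precisely the argument those references supply, so the proposal is correct and follows essentially the same route.
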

\begin{proof}
For $k=1$, the pair is known as Crouzeix-Raviart pair and the lemma can be founded proved in \cite{Crouzeix.M;Raviart.P1973}; for $k=2$, it is known as Fortin-Soulie pair and is studied in \cite{Fortin.M;Soulie.M1983}; for $k\geqslant 4$, it is true by Lemma \ref{lem:P4P3} as $\uS{}^k_{h0}\subset \uG{}^k_{h0}$.
\end{proof}

\begin{remark}\label{rem:cfinfsup}
The case $k=3$ corresponds to the Crouzeix-Falk pair and was studied in \cite{Crouzeix.M;Falk.R1989}. In that paper, they proved that the pair $\uG{}^3_{h0}-\mathbb{P}^2_{h0}$ is stable ``for most reasonable meshes". Moreover, they presented a conjecture that the pair is stable ``for any triangulation of a convex polygon satisfying the minimal angle condition and containing an interior vertex". Recently, some triangulations where $\uS{}^3_{h0}-\mathbb{P}_{h0}^2$ is stable or at least $\dv\uS{}^3_{h0}=\mathbb{P}_{h0}^2$ are listed in \cite{guzman2017cubic}, and $\uG{}^3_{h0}-\mathbb{P}_{h0}^2$ is stable or $\dv_h\uG{}^3_{h0}=\mathbb{P}_{h0}^2$ on these triangulations. 

\end{remark}
\begin{remark}
By the symmetry between the two components of $\uH{}^1(\Omega)$, Lemmas \ref{lem:P4P3} and \ref{lem:pGL} remain true when ``$\dv$" and ``$\dv{}_h$" are replaced by ``$\rot$" and ``$\rot_h$", respectively. 
\end{remark}

\subsection{New cubic and quartic elements}

\subsubsection{A new cubic element {\bf FE${}_{\rm nsc}$}}

The finite element {\bf FE${}_{\rm nsc}$} is defined by $(T, P_T, D_T)$ with

\fbox{
\begin{minipage}{0.95\textwidth}

\begin{itemize}
\item $T$ is a triangle;
\item $P_T=P_3(T)$;
\item the components of $D_T$ for $v\in H^2(T)$ are:
$$
d_i(v)=v(a_i);\ d_{i+3}(v)=\fint_{e_i}v;\ d_{i+6}(v)=\fint_{T}(\lambda_i^2\lambda_{i+1}-\lambda_i\lambda_{i+1}^2)v;\ i=1,2,3; d_{10}(v)=\fint_T v.
$$
\end{itemize}
\end{minipage}
}
\begin{lemma}
The finite element {\bf FE${}_{\rm nsc}$} is well defined. 
\end{lemma}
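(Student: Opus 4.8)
The plan is to verify that the triple $(T,P_3(T),D_T)$ is unisolvent. Since $\dim P_3(T)=10$ coincides with the number of functionals listed in $D_T$, it suffices to show that any $v\in P_3(T)$ annihilated by all ten functionals vanishes identically. I would first use the boundary functionals $d_1,\dots,d_6$ to pin down the trace of $v$ on $\partial T$, and then use the interior functionals $d_7,\dots,d_{10}$ to eliminate the remaining coefficients.

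For the trace, fix an edge $e_i$ opposite $a_i$, on which $v$ is a univariate cubic. The vertex conditions $v(a_{i+1})=v(a_{i+2})=0$ force $v|_{e_i}\in\span\{\lambda_{i+1}\lambda_{i+2},\ \lambda_{i+1}\lambda_{i+2}(\lambda_{i+1}-\lambda_{i+2})\}$. Because $\fint_{e_i}\lambda_{i+1}\lambda_{i+2}\neq 0$ while $\fint_{e_i}\lambda_{i+1}\lambda_{i+2}(\lambda_{i+1}-\lambda_{i+2})=0$ by antisymmetry about the edge midpoint, the edge-average condition $d_{i+3}(v)=0$ removes the symmetric part and leaves $v|_{e_i}=c_i\,b_i|_{e_i}$, where I set $b_i:=\lambda_{i+1}\lambda_{i+2}(\lambda_{i+1}-\lambda_{i+2})$. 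Viewed as a function on $T$, $b_i$ has vanishing trace on $e_{i+1}$ and $e_{i+2}$, so $v-\sum_{i}c_i b_i$ has vanishing trace on all of $\partial T$. Since the only cubic vanishing on the three edges is a multiple of $\Lambda=\lambda_1\lambda_2\lambda_3$ (each edge forces divisibility by the corresponding $\lambda_i$), I obtain the representation $v=\sum_{i=1}^3 c_i b_i + c_0\Lambda$.

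It then remains to show that $d_{i+6}(v)=0$ for $i=1,2,3$ together with $d_{10}(v)=0$ force $c_0=c_1=c_2=c_3=0$. The decisive observation is that the weight appearing in $d_{i+6}$ is precisely $\lambda_i^2\lambda_{i+1}-\lambda_i\lambda_{i+1}^2=b_{i-1}$ (indices mod $3$), so that $d_{i+6}(b_j)=\fint_T b_{i-1}b_j$ is, up to a relabelling of rows, exactly the $L^2(T)$ Gram matrix of $\{b_1,b_2,b_3\}$. These three functions are linearly independent, since restricting $\sum c_i b_i$ to $e_j$ isolates $c_j b_j|_{e_j}$; hence their Gram matrix is symmetric positive definite and in particular nonsingular. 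Moreover the $\Lambda$-coefficient decouples: using the reflection symmetry $\lambda_i\leftrightarrow\lambda_{i+1}$ of $T$ and the antisymmetry of $b_{i-1}$ gives $d_{i+6}(\Lambda)=\fint_T b_{i-1}\Lambda=0$, while $d_{10}(b_j)=\fint_T b_j=0$ and $d_{10}(\Lambda)=\fint_T\Lambda\neq0$. Thus the $4\times 4$ coefficient matrix is block triangular with a nonsingular Gram block and a nonzero scalar block, so all coefficients vanish and $v=0$.

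I expect the Gram-matrix step to be the crux of the argument. The elegant point is that the somewhat unusual interior weights are, after relabelling, exactly the edge functions $b_i$ produced by the trace analysis; this turns the nonsingularity question into the automatic positive-definiteness of a Gram matrix and simultaneously decouples the bubble coefficient $c_0$ through the triangle's reflection symmetry. The supporting integral identities ($\fint_{e_i}b_i=0$, $d_{10}(b_j)=0$, $d_{i+6}(\Lambda)=0$) are elementary and I would not grind through them in detail.
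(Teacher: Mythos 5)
Your proof is correct and follows the same overall architecture as the paper's: both identify the four\mbox{-}dimensional complement $\mathrm{span}\{\lambda_i^2\lambda_{i+1}-\lambda_i\lambda_{i+1}^2\}_{i=1:3}\oplus\mathrm{span}\{\Lambda\}$ of the quadratics inside $P_3(T)$, observe that these functions are annihilated by the six vertex/edge functionals while the bubble coefficient decouples from the three weighted interior moments, and reduce everything to the nonsingularity of a $3\times 3$ interior block. Where you differ is in how the two nontrivial steps are closed. First, the paper exhibits the complement by listing a basis of $P_3(T)$ and checking the relevant vanishing identities, whereas you derive it from a trace analysis on each edge (vertex values plus edge mean kill the symmetric part, leaving the antisymmetric edge cubic $b_i$; what remains vanishes on $\partial T$ and is therefore a multiple of $\Lambda$); this is a slightly longer route but makes the choice of the functions $b_i$ look inevitable rather than fortuitous. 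Second, and more substantively, the paper computes the entries of the interior block explicitly ($6/7!$ on the diagonal, $-2/7!$ off it) and appeals to an algebraic calculation for its nonsingularity, whereas you observe that this block is, up to a row permutation, the $L^2(T)$ Gram matrix of the linearly independent functions $b_1,b_2,b_3$ and is therefore automatically positive definite. That observation is the nicest feature of your write-up: it removes the only genuinely computational step and explains structurally why the interior weights were chosen to coincide with the edge functions $b_i$. The small supporting identities you defer ($\fint_{e_i}b_i=0$, $\fint_T b_j=0$, $\fint_T (\lambda_i^2\lambda_{i+1}-\lambda_i\lambda_{i+1}^2)\Lambda=0$) are all immediate from antisymmetry or the standard barycentric integration formula, so nothing essential is missing.
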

\begin{proof}

It is well known that
$$
d_j(\lambda_i^2)=\delta_{ij},\quad d_j(\lambda_i\lambda_{i+1})=0,
\quad d_{j+3}(\lambda_i\lambda_{i+1})=\delta_{ij},\quad d_{j+3}(\lambda_i^2)=0, \ \ i,j=1:3.
$$
To show the unisolvence of the space, it suffices to show there exist a subset of $P_3(T)$ which vanishes under $\{d_i\}_{i=1:6}$ and resolves $\{d_i\}_{i=7:10}$. 

Evidently, 
$$
P_T={\rm span}\{\lambda_i^2,\lambda_i\lambda_{i+1},\lambda_i^2\lambda_{i+1}-\lambda_i\lambda_{i+1}^2,\Lambda\}_{i=1:3}.
$$
Direct calculation leads to 
\begin{equation*}
\fint_{T}(\lambda_k^2\lambda_{k+1}-\lambda_k\lambda_{k+1}^2)(\lambda_i^2\lambda_{i+1}-\lambda_i\lambda_{i+1}^2)=\left\{
\begin{array}{rl}
\frac{6}{7!},&k\mtequiv i
\\
-\frac{2}{7!},&k\mtequiv i+1
\\
-\frac{2}{7!}, &k\mtequiv i+2
\end{array}
\right.;
\end{equation*}
further, 
$$
d_k(\lambda_i^2\lambda_{i+1}-\lambda_i\lambda_{i+1}^2)=0,\ \ \mbox{if}\ k\not\in\{7,8,9\}, \ \mbox{and}\ 
d_k(v)=1/60\ \delta_{10,k}.
$$
Summing all above leads to the result by an algebraic calculation. 
\end{proof}
Associated with the finite element {\bf FE${}_{\rm nsc}$}, define
$$
A^3_{h}:=\{w_h\in L^2(\Omega):w_h|_T\in P_3(T); w_h(a)\ \mbox{is\ continuous\ at}\ a\in\mathcal{X}_h;\ \fint_e \llt w_h\rrt=0\ \forall\, e\in\mathcal{E}_h^i\},
$$
and, associated with the boundary condition of $H^1_0(\Omega)$, define
$$
\quad A^3_{h0}:=\{w_h\in A^3_h:w_h(a)=0\ at\ a\in\mathcal{X}_h^b; \fint_ew_h=0,\ \ e\in \mathcal{E}_h^b\}.
$$

\subsubsection{A new scalar quartic element {\bf FE${}_{\rm nsq}$}}

The finite element {\bf FE${}_{\rm nsq}$} is defined by $(T, P_T, D_T)$ with

\fbox{
\begin{minipage}{0.95\textwidth}

\begin{itemize}
\item $T$ is a triangle;
\item $P_T=P_4(T)$;
\item the components of $D_T$ for $v\in H^2(T)$ are:
\begin{multline*}
\qquad
d_i(v)=v(a_i);\ d_{i+3}(v)=\fint_{e_i}v;\ d_{i+6}(v)=\fint_{e_i}\lambda_{i+1}v;
\\ 
d_{i+9}(v)=\fint_{e_i}\partial_{\mathbf n}v;\ d_{i+12}(v)=\fint_T\lambda_{i}v; \ i=1,2,3.\qquad\qquad
\end{multline*}
\end{itemize}
\end{minipage}
}
\begin{lemma}
The finite element {\bf FE${}_{\rm nsq}$} is well defined. 
\end{lemma}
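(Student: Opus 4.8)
The plan is to establish unisolvence. Since $\dim P_4(T)=15$ equals the number of functionals in $D_T$, it is enough to prove the vanishing property: if $v\in P_4(T)$ and $d_j(v)=0$ for every $j$, then $v\equiv 0$.

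First I would examine the trace of $v$ on each edge. Fix $e_i$ and regard $v|_{e_i}\in P_4(e_i)$ as a univariate quartic. The functionals attached to $e_i$ through this trace are the two endpoint values $v(a_{i+1})=v(a_{i+2})=0$, the average $\fint_{e_i}v=0$, and the weighted moment $\fint_{e_i}\lambda_{i+1}v=0$; since $\lambda_{i+1}$ is affine on $e_i$, the last two say exactly that $v|_{e_i}$ is $L^2(e_i)$-orthogonal to $P_1(e_i)$. Hence $v|_{e_i}$ lies in the one-dimensional space spanned by the quartic $\psi_i$ on $e_i$ that vanishes at both endpoints of $e_i$ and is orthogonal to $P_1(e_i)$, unique up to a scalar. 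The subtlety to emphasize is that these four functionals do \emph{not} pin down $v|_{e_i}$: a one-parameter freedom $v|_{e_i}=t_i\psi_i$ survives. This is precisely why an edge-by-edge argument does not close, and why the normal moments and interior moments must be brought in.

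Next I would isolate the residual subspace. The trace analysis shows that any $v$ in the common kernel $K$ of the first nine functionals $\{d_j\}_{j=1}^9$ has $v|_{e_i}=t_i\psi_i$, so its boundary trace carries at most three parameters, while $v$ minus any fixed extension of that trace vanishes on $\partial T$ and hence lies in $\Lambda P_1=\mathrm{span}\{\Lambda,\Lambda\lambda_1,\Lambda\lambda_2\}$, of dimension three. Thus $\dim K\le 6$; as $K$ visibly contains the three boundary functions $\Psi_i\in P_4(T)$ with trace $\psi_i$ on $e_i$ and zero trace on the other two edges, together with the three bubbles $\Lambda,\Lambda\lambda_1,\Lambda\lambda_2$, we get $\dim K=6$ and $K=\mathrm{span}\{\Psi_1,\Psi_2,\Psi_3,\Lambda,\Lambda\lambda_1,\Lambda\lambda_2\}$. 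In particular the first nine functionals are linearly independent. Ordering a basis of $P_4(T)$ as a complement of $K$ followed by the above basis of $K$, and ordering the functionals as $\{d_j\}_{j=1}^9$ followed by the six remaining ones, the DOF--basis matrix becomes block lower triangular, its top-left $9\times 9$ block being invertible. Writing $v=\sum_{i=1}^3 t_i\Psi_i+b$ with $b\in\Lambda P_1$, unisolvence therefore reduces to showing that the six remaining functionals --- the edge normal moments $\fint_{e_i}\partial_{\mathbf{n}}v$ and the interior moments $\fint_T\lambda_i v$ --- give a nonsingular $6\times 6$ system for $(t_1,t_2,t_3)$ and the three coefficients of $b$.

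The main obstacle is the invertibility of this $6\times 6$ block, which does not split further. The bubbles have nonzero normal derivative on the edges, $\partial_{\mathbf{n}}\Lambda|_{e_i}=(\partial_{\mathbf{n}}\lambda_i)\,\lambda_{i+1}\lambda_{i+2}\not\equiv 0$, so the normal moments genuinely couple the boundary functions $\Psi_i$ to $b$; moreover the weighted moment $\fint_{e_i}\lambda_{i+1}v$ singles out one endpoint of $e_i$, so the element is invariant only under the cyclic group $C_3$ and not the full $S_3$, leaving the matrix merely block circulant. I would exploit the $C_3$-symmetry to block diagonalize it over the three characters of $C_3$, reducing the determinant to a handful of explicit scalar conditions built from barycentric integrals of the same kind tabulated in the proof of {\bf FE${}_{\rm nsc}$} (for instance $\fint_T\lambda_i\,\Lambda\lambda_j$ and the edge pairings of $\psi_i$ with the normal traces of $\Psi_j$ and of the bubbles). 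Checking that each reduced determinant is nonzero is the crux and the only step where a genuine computation cannot be avoided.
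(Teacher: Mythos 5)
Your structural reduction is correct and is in substance the same as the paper's: the paper also splits $P_4(T)$ into a nine\-/dimensional part dual to $\{d_i\}_{i=1:9}$ and a six\-/dimensional complement annihilated by them, and your space $K=\mathrm{span}\{\Psi_1,\Psi_2,\Psi_3\}\oplus\Lambda P_1(T)$ is exactly the paper's $\mathrm{span}\{\varphi_i,\psi_i\}_{i=1:3}$ with $\varphi_i=\lambda_i^3\lambda_{i+1}-3\lambda_i^2\lambda_{i+1}^2+\lambda_i\lambda_{i+1}^3$ and $\psi_i=\lambda_i\Lambda$ (each $\varphi_i$ vanishes on $e_i$ and $e_{i+1}$ and restricts on $e_{i+2}$ to the quartic orthogonal to $P_1(e_{i+2})$ with vanishing endpoint values, i.e.\ it is one of your $\Psi$'s). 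Your edge\-/trace analysis giving $\dim K\le 6$, the existence of the extensions $\Psi_i$, and the reduction of unisolvence to a $6\times 6$ determinant are all sound.

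The gap is that you stop exactly at the step on which the lemma stands or falls: you never establish that the $6\times 6$ matrix of $\{\fint_{e_i}\partial_{\mathbf n}\cdot,\ \fint_T\lambda_i\cdot\}$ on $K$ is nonsingular, only that it ``is the crux and \ldots cannot be avoided.'' A priori there is no reason this matrix must be invertible --- that is precisely the content of unisolvence --- so the argument is incomplete as written. The paper closes this by choosing the basis concretely and tabulating every entry; with its choice one finds $\fint_T\lambda_k\varphi_i=0$ for all $i,k$, so the matrix is block lower triangular and the determinant factors into $\det\bigl(f_k(\varphi_i)\bigr)$, which up to row scaling is $(-\tfrac14)^3\det(I+P)$ with $P$ a cyclic permutation (hence $\det(I+P)=2\ne 0$), times $\det\bigl(\fint_T\lambda_k\psi_i\bigr)=\bigl(\tfrac{4}{7!}\bigr)^3\det(I+2J)\ne 0$. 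In other words, your remark that the $6\times 6$ block ``does not split further'' is an artifact of leaving the $\Psi_i$ unspecified; a judicious choice of representatives makes the normal moments of the bubble-free part orthogonal to the interior moments, and the $C_3$ character decomposition you propose becomes unnecessary. To complete your proof you would either have to carry out the circulant reduction and exhibit the three nonzero eigenvalue products explicitly, or adopt a concrete basis of $K$ and compute the entries as the paper does.
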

\begin{proof}
There exist $\xi_i\in P_3(T)$, $i=1:9$, such that $d_i(\xi_j)=\delta_{ij}$, $i,j=1:9$. To show the unisolvence of the space, it suffices to show there exist a subset of $P_4(T)$ which vanishes under $\{d_i\}_{i=1:9}$ and resolves $\{d_i\}_{i=10:15}$. 
Set, for $i=1:3$
$$
\varphi_i=\lambda_i^3\lambda_{i+1}-3\lambda_i^2\lambda_{i+1}^2+\lambda_i\lambda_{i+1}^3,\quad\mbox{and}\quad \psi_i=\lambda_i\Lambda.
$$
Then $d_i(\varphi_j)=d_i(\psi_j)=0$, $i=1:9$, $j=1:3$.  Further, 
\begin{equation*}
\fint_{e_k}\partial_{\mathbf{n}}\varphi_i=\left\{
\begin{array}{rl}
-\frac{1}{4}\|\nabla\lambda_k\|,& k=i
\\
-\frac{1}{4}\|\nabla\lambda_k\|,&k\mtequiv i+1
\\
0,&k\mtequiv i+2
\end{array}
\right.,
\quad
\fint_T\lambda_k\varphi_i=0,\ k=1:3,
\end{equation*}

$$
\fint_{e_k}\partial_{\mathbf{n}}\psi_i=\left\{
\begin{array}{rl}
0,& k=i
\\
-\frac{1}{12}\|\nabla\lambda_k\|,&k\mtequiv i+1
\\
-\frac{1}{12}\|\nabla\lambda_k\|,&k\mtequiv i+2
\end{array}
\right.,
\quad\mbox{and}\quad
\fint_T\lambda_k\varphi_i=\left\{
\begin{array}{rl}
\frac{12}{7!},&k\mtequiv i
\\
\frac{8}{7!},&k\mtequiv i+1
\\
\frac{8}{7!},&k\mtequiv i+2.
\end{array}
\right.
$$
A simple algebraic calculation shows that $\{\varphi_i,\psi_i\}_{i=1:3}$, while vanishes under $\{d_i\}_{i=1:9}$, resolves $\{d_i\}_{i=10:15}$. The proof is completed. 
\end{proof}

Corresponding to the finite element {\bf FE${}_{\rm nsq}$}, define space 
\begin{multline*}
A^4_{h}:=\{w_h\in L^2(\Omega):w_h|_T\in P_4(T); w_h(a)\ \mbox{is\ continuous\ at}\ a\in\mathcal{X}_h;
\\ 
\fint_e p_e \llt w_h\rrt=0,\ \forall\,p_e\in P_1(e), \ \mbox{and}\ \fint_e \llt\partial_{\bf n}w_h\rrt=0\ \forall\,e\in\mathcal{E}_h^i\},
\end{multline*}
and, corresponding to the boundary condition of $H^2_0(\Omega)$, define 
$$
A^4_{h0}:=\{w_h\in A^4_h:w_h(a)=0\ at\ a\in\mathcal{X}_h^b; \fint_ew_hp_e=0\ \forall\,p_e\in P_1(e)\ \mbox{and}\ \fint_e\partial_{\bf n}w_h=0,\ \ e\in\mathcal{E}_h^b\}.
$$

\subsection{Enriched cubic and quartic elements}

\subsubsection{An enriched cubic element}
The finite element {\bf FE${}_{\rm ec}$} is defined by $(T, P_T, D_T)$ with

\fbox{
\begin{minipage}{0.90\textwidth}

\begin{itemize}
\item $T$ is a triangle;
\item $P_T=P^{3+}_T:=P_3(T)+\Lambda P_1(T)$;
\item the components of $D_T$ for $v\in H^2(T)$ are:
$$
d_i(v)=v(a_i);\ d_{i+3}(v)=\fint_{e_i}v;\ d_{i+6}(v)=\fint_{e_i}\partial_{\mathbf n}v;\ d_{i+9}(v)=\fint_{e_i}\lambda_{i+1}\partial_{\mathbf n}v;\ i=1,2,3.
$$
\end{itemize}
\end{minipage}
}

\begin{lemma}
The finite element {\bf FE${}_{\rm ec}$} is well-defined. 
\end{lemma}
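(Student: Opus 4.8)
The plan is to establish unisolvence against a matching dimension count. First I would verify $\dim P_T=12=\#D_T$. Since every element of $\Lambda P_1(T)$ of degree at most three is a multiple of $\Lambda$, we have $P_3(T)\cap\Lambda P_1(T)=\mathrm{span}\{\Lambda\}$, whence $\dim P_T=\dim P_3(T)+\dim\Lambda P_1(T)-1=10+3-1=12$. With the dimensions matched, it suffices to prove that $v\in P_T$ with $d_i(v)=0$ for $i=1:12$ forces $v=0$.

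Second, I would reproduce the block-triangular reduction used for {\bf FE${}_{\rm nsc}$}. Put $V:=\{v\in P_T:d_i(v)=0,\ i=1:6\}$ and $U:=\mathrm{span}\{\lambda_i^2,\lambda_i\lambda_{i+1}\}_{i=1:3}=P_2(T)$. The identities $d_j(\lambda_i^2)=\delta_{ij}$ and $d_{j+3}(\lambda_i\lambda_{i+1})=\delta_{ij}$ with vanishing cross terms, already recorded in the {\bf FE${}_{\rm nsc}$} lemma, show that $\{d_i\}_{i=1:6}$ is unisolvent on $U$; hence $U\cap V=\{0\}$ and, by the dimension count, $P_T=U\oplus V$. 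In the ordered basis ($U$ first, then $V$) the functional matrix is block lower triangular with invertible leading block, so the unisolvence of {\bf FE${}_{\rm ec}$} reduces to that of $\{d_i\}_{i=7:12}$ on the six-dimensional space $V$.

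Third, I would describe $V$ explicitly. On an edge $e_i$ the trace of any $v\in P_T$ is a cubic, the enrichment $\Lambda P_1(T)$ contributing nothing there; the conditions $v(a_{i+1})=v(a_{i+2})=0$ and $\fint_{e_i}v=0$ then force $v|_{e_i}=\beta_i\,\lambda_{i+1}\lambda_{i+2}(\lambda_{i+1}-\lambda_{i+2})$ for a scalar $\beta_i$. Accordingly $V$ is spanned by the three cubics $g_i:=\lambda_i^2\lambda_{i+1}-\lambda_i\lambda_{i+1}^2$, each of which lies in $V$, carrying precisely such an admissible trace on exactly one edge and vanishing on the other two, together with the three quartics $\Lambda\lambda_1,\Lambda\lambda_2,\Lambda\lambda_3$; linear independence of these six functions follows by reading off traces edge by edge.

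Finally, the remaining and genuinely computational step is to assemble the $6\times 6$ matrix of the normal-moment functionals $d_{i+6}=\fint_{e_i}\partial_{\mathbf n}(\cdot)$ and $d_{i+9}=\fint_{e_i}\lambda_{i+1}\partial_{\mathbf n}(\cdot)$ on this basis and to show it is nonsingular. Here $\partial_{\mathbf n}g_j|_{e_i}$ is a quadratic on $e_i$, while the factorisation $\Lambda=\lambda_i\lambda_{i+1}\lambda_{i+2}$ gives $\partial_{\mathbf n}(\Lambda\lambda_j)|_{e_i}=(\partial_{\mathbf n}\lambda_i)\,\lambda_{i+1}\lambda_{i+2}\lambda_j|_{e_i}$, so all the required integrals are elementary. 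The construction is invariant under the cyclic shift $i\mapsto i+1$, so the matrix is block circulant with $2\times 2$ blocks and decomposes, via the discrete Fourier transform over $\mathbb{Z}_3$, into three $2\times 2$ blocks; the crux of the whole argument is verifying that none of these degenerate. Granting that, the block-triangular structure closes the unisolvence and shows {\bf FE${}_{\rm ec}$} is well defined.
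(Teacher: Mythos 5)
Your reduction is sound and is essentially the route the paper takes: you split $P_T=P_2(T)\oplus V$ with $V=\ker\{d_i\}_{i=1:6}$, identify $V=\mathrm{span}\{\lambda_i^2\lambda_{i+1}-\lambda_i\lambda_{i+1}^2,\ \lambda_i\Lambda\}_{i=1:3}$ (the paper's $\hat P^{3+}(T)$), and observe that unisolvence of the whole element is equivalent to nonsingularity of the $6\times 6$ matrix of the normal-moment functionals $\{d_i\}_{i=7:12}$ on $V$. Your dimension count $\dim P_T=10+3-1=12$ and your trace-based description of $V$ are correct and, if anything, cleaner than the paper's.

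The gap is that you stop exactly where the lemma actually lives. Every step up to the last one is routine; the entire content of the statement is that the $6\times6$ block $C=\bigl(d_{i+6}(v_j)\bigr)$ on $V$ is nonsingular, and you write ``granting that'' instead of verifying it. A $6\times6$ determinant of this kind can perfectly well vanish for an unlucky choice of moments (indeed the whole design of the element is tuned so that it does not), so nonsingularity cannot be taken on faith. The paper discharges this by computing all entries --- $f_k(\lambda_i^2\lambda_{i+1}-\lambda_i\lambda_{i+1}^2)$, $f_k(\lambda_i\Lambda)$ and the corresponding values for the second moment (after the harmless change of functional $g_k(v)=\fint_{e_k}(1/2-\lambda_{k+1})\partial_{\mathbf n}v$) --- and then exhibiting an explicit dual basis $\{\phi_i,\psi_i\}_{i=1:3}$ of $V$, which settles invertibility constructively. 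Your circulant observation is a reasonable way to organize that computation, but note one wrinkle: the entries carry row factors $\|\nabla\lambda_k\|$, so the matrix is not literally cyclic for a general triangle; it is a diagonal row scaling (by the nonzero numbers $\|\nabla\lambda_k\|$) of a constant block-circulant matrix, and only the latter needs its three $2\times2$ Fourier blocks checked. With those determinants actually evaluated, your argument closes; without them, the lemma is asserted rather than proved.
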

\begin{proof}
To show  {\bf FE${}_{\rm ec}$} is well-defined, it suffices to show $P^{3+}_T$ is unisolvent versus $D_T$.

Note that $\Lambda\in P_T^{3+}$ and $\dim(P_3(T))=12$. Actually, 
$$
P_T^{3+}=P_2(T)\oplus{\rm span}\{\lambda_i^2\lambda_{i+1}-\lambda_i\lambda_{i+1}^2,\ \lambda_i\Lambda\}_{i=1:3}:=P_2(T)\oplus\hat{P}^{3+}(T).
$$
Set $\eta_i=3\lambda_i^2-2\lambda_i$ and $\eta_{i+3}=\lambda_i\lambda_{i+1}$, $i=1:3$, then $P_2(T)={\rm span}\{\eta_i\}_{i=1:6}$, and $d_i(\eta_j)=\delta_{ij}$. Meanwhile, $d_i(\psi)=0$, $i=1:6$, for any $\psi\in \hat{P}^{3+}(T)$. This way, to show the unisolvence of the finite element, it suffices to show $\{d_i\}_{i=7:12}$ is unisolvent versus $\hat{P}^{3+}(T)$.

We rewrite $f_i(v)=\fint_{e_i}\partial_{\mathbf n}v$, and $g_i(v)=\fint_{e_i}(1/2-\lambda_{i+1})\partial_{\mathbf n}v$, $i=1:3$; then $\{f_i,g_i\}_{i=1:3}=\{d_i\}_{i=7:12}$. Direct calculation leads to that 
\begin{equation*}
f_k(\lambda_i^2\lambda_{i+1}-\lambda_i\lambda_{i+1}^2)=\left\{
\begin{array}{rl}
\frac{1}{3}\|\nabla \lambda_k\|,&k\mtequiv i
\\
-\frac{1}{3}\|\nabla \lambda_k\|,&k\mtequiv i+1
\\
0, &k\mtequiv i+2
\end{array}
\right.,
\quad
f_k(\lambda_i\Lambda)=\left\{
\begin{array}{rl}
0,&k\mtequiv i
\\
-\frac{1}{12}\|\nabla\lambda_k\|&k\mtequiv i+1
\\
-\frac{1}{12}\|\nabla\lambda_k\|&k\mtequiv i+2
\end{array}
\right.,
\end{equation*}

\begin{equation*}
g_k(\lambda_i^2\lambda_j-\lambda_i\lambda_j^2)=-\frac{1}{12}\|\nabla\lambda_k\|,\quad\mbox{and}\quad
g_k(\lambda_i\Lambda)=\left\{
\begin{array}{rl}
0,&k\mtequiv i
\\
-\frac{1}{120}\|\nabla\lambda_k\|,&k\mtequiv i+1
\\
\frac{1}{120}\|\nabla\lambda_k\|,&k\mtequiv i+2.
\end{array}
\right.
\end{equation*}
A further algebraic calculation (of a determinant) proves the unisolvence of $\{d_i\}_{i=7}^{12}$ versus $\hat{P}^3(T)$. More clearly, for $i,j,k,l\in\{1,2,3\}$,
$$
f_i(\phi_j)=\delta_{ij},\ \  g_k(\phi_l)=0,\ \  f_i(\psi_j)=0, \ \ \mbox{and}\ \ g_k(\psi_l)=\delta_{kl},
$$
where
$$
\phi_i=\frac{-1}{\|\nabla\lambda_i\|}\lambda_i(2\lambda_i-1)(\lambda_i-1)
$$
and
$$
\psi_i=\frac{1}{\|\nabla\lambda_i\|}\left[2(\lambda_i^2\lambda_{i+1}-\lambda_i\lambda_{i+1}^2)-8(\lambda_{i+1}^2\lambda_{i+2}-\lambda_{i+1}\lambda_{i+2}^2)+2(\lambda_{i+2}^2\lambda_i-\lambda_{i+2}\lambda_i^2)+40(\lambda_{i+1}-\lambda_{i+2})\Lambda\right]
$$
form a basis of $\hat{P}^3(T)$. The lemma is proved. 
\end{proof}

Associated with the finite element {\bf FE${}_{\rm ec}$}, define
\begin{multline*}
B_h^{3+}:=\{v\in L^2(\Omega)|\ v|_T\in P_3^+(T), v\ \mbox{is\ continuous\ at}\ a\in\mathcal{X}_h;
\\ 
\int_e\llt v\rrt=0, \ \mbox{and}\  \int_ep_e\llt \partial_{\bf n}v\rrt=0,  \forall\, p_e\in P_1(e), \ \forall\,e\in\mathcal{E}_h^i\},
\end{multline*}
and associated with the boundary condition of $H^2_0(\Omega)$, define
$$
B_{h0}^{3+}:=\{v\in B_h^{3+}: v(a)=0,\ a\in\mathcal{X}_h^b;\ \int_ev=0,\ e\in\mathcal{E}_h^b; \int_ep_e\partial_{\bf n}v=0,\  \forall\,p_e\in P_1(e),\ e\in\mathcal{E}_h^b\}.
$$

\subsubsection{Enriched quartic element {\bf FE${}_{\rm eq}$}}

The element {\bf FE${}_{\rm eq}$} is defined by $(T, P_T, D_T)$ with

\fbox{
\begin{minipage}{0.95\textwidth}
\begin{itemize}
\item $T$ is a triangle;
\item $P_T=P^{4+}_T:=P_4(T)+{\rm span}\{(\lambda_i^2\lambda_{i+1}-\lambda_i\lambda_{i+1}^2)\Lambda\}$;
\item the components of $D_T$ for $v\in H^2(T)$ are:
\begin{multline*}
d_i(v)=v(a_i);\ d_{i+3}(v)=\fint_{e_i}v;\ d_{i+6}(v)=\fint_{e_i}\lambda_{i+1}v;
\\ 
d_{i+9}(v)=\fint_{e_i}\partial_{\mathbf n}v;\ d_{i+12}(v)=\fint_{e_i}\lambda_{i+1}\partial_{\mathbf n}v;\ d_{i+15}(v)=\fint_{e_i}\lambda_{i+1}^2\partial_{\bf n}v; \ i=1,2,3.
\end{multline*}
\end{itemize}
\end{minipage}
}

\begin{lemma}
The finite element {\bf FE${}_{\rm eq}$} is well-defined. 
\end{lemma}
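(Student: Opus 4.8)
The plan is to follow the same two–stage strategy used for \textbf{FE}${}_{\rm nsc}$, \textbf{FE}${}_{\rm nsq}$ and \textbf{FE}${}_{\rm ec}$: first verify that the number of shape functions equals the number of functionals, and then choose a basis of $P^{4+}_T$ that renders the $18\times 18$ matrix $(d_k(\cdot))$ block triangular, so that unisolvence reduces to the nonsingularity of one $9\times 9$ block. The enriching sextics are kept invisible to the ``value'' functionals, exactly as $\varphi_i,\psi_i$ were kept invisible to $\{d_i\}_{i=1:9}$ in the analysis of \textbf{FE}${}_{\rm nsq}$.

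\emph{Step 1 (dimension).} First I would show $\dim P^{4+}_T=18$, i.e. that the three sextics $B_j:=(\lambda_j^2\lambda_{j+1}-\lambda_j\lambda_{j+1}^2)\Lambda$, $j=1,2,3$, are linearly independent and meet $P_4(T)$ only in $\{0\}$. Writing $B_j=f_j\Lambda$ with $f_j=\lambda_j^2\lambda_{j+1}-\lambda_j\lambda_{j+1}^2$ and using $\Lambda|_{e_i}=0$, one gets $\partial_{\mathbf n}B_j|_{e_i}=f_j|_{e_i}\,\partial_{\mathbf n}\Lambda|_{e_i}$; since $f_j|_{e_i}\equiv 0$ unless $i\mtequiv j-1$, the normal derivative of $B_j$ is supported on the single edge $e_{j-1}$ and is nonzero there. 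Independence is then immediate (a vanishing combination, read off on each edge, forces each coefficient to vanish), and any combination lying in $P_4(T)$ would vanish on $\partial T$, hence equal $\Lambda\ell$ with $\ell\in P_1(T)$; dividing by $\Lambda$ gives $\sum_j c_j f_j=\ell\in P_1(T)$, which forces $c_j=0$. Thus $\dim P^{4+}_T=15+3=18=\#D_T$, and unisolvence is equivalent to showing $v\in P^{4+}_T$ with $D_T(v)=0$ implies $v=0$.

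\emph{Step 2 (triangular reduction).} Because $d_1,\dots,d_9$ are precisely the value–type functionals already used for \textbf{FE}${}_{\rm nsq}$, the functions $\xi_1,\dots,\xi_9\in P_3(T)$ from that proof still satisfy $d_i(\xi_j)=\delta_{ij}$, $1\le i,j\le 9$, so these nine functionals are independent on $P^{4+}_T$ and their common kernel is $9$-dimensional. I would exhibit as a basis of this kernel the nine functions $\varphi_i=\lambda_i^3\lambda_{i+1}-3\lambda_i^2\lambda_{i+1}^2+\lambda_i\lambda_{i+1}^3$, $\psi_i=\lambda_i\Lambda$ and $B_i$, $i=1,2,3$: the first six are annihilated by $d_1,\dots,d_9$ (as recorded for \textbf{FE}${}_{\rm nsq}$), the $B_i$ vanish on $\partial T$, and all nine are independent by Step 1. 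In the basis $\{\xi_i\}\cup\{\varphi_i,\psi_i,B_i\}$ the matrix $(d_k(\cdot))$ is block triangular with an identity leading block, so unisolvence is equivalent to the nonsingularity of the $9\times 9$ block $M:=\big(d_k(w)\big)$, $k=10,\dots,18$, $w\in\{\varphi_i,\psi_i,B_i\}$.

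\emph{Main obstacle.} The crux is $\det M\neq 0$, where the rows are the full $P_2(e_i)$–moments $\fint_{e_i}\partial_{\mathbf n}v$, $\fint_{e_i}\lambda_{i+1}\partial_{\mathbf n}v$, $\fint_{e_i}\lambda_{i+1}^2\partial_{\mathbf n}v$ of the normal derivatives. This is the only genuinely computational part: one must tabulate these moments for $\varphi_i,\psi_i,B_i$, extending the weight-$1$ computations of \textbf{FE}${}_{\rm nsq}$ to the weights $\lambda_{i+1}$ and $\lambda_{i+1}^2$. Two structural facts keep the determinant tractable: $\partial_{\mathbf n}B_j$ is supported on the single edge $e_{j-1}$ (Step 1), and $\partial_{\mathbf n}\psi_i$ vanishes on $e_i$ (since $\psi_i$ vanishes on all of $\partial T$, so $\partial_{\mathbf n}\psi_i=\lambda_i\,\partial_{\mathbf n}\Lambda$); the $\varphi_i$, by contrast, retain a nonzero trace on $e_{i+2}$, so their higher normal moments there must be computed and do not vanish in general. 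I expect this per-edge bookkeeping and the final numerical determinant evaluation to be the delicate step, exactly of the kind carried out in the determinant argument for \textbf{FE}${}_{\rm ec}$; everything else is structural and reuses the earlier lemmas.
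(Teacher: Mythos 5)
Your plan follows exactly the paper's route: the paper likewise takes the $\xi_i\in P_3(T)$ dual to $\{d_i\}_{i=1:9}$, observes that the nine functions $\varphi_i=\lambda_i^3\lambda_{i+1}-3\lambda_i^2\lambda_{i+1}^2+\lambda_i\lambda_{i+1}^3$, $\psi_i=\lambda_i\Lambda$ and $\eta_i=(\lambda_i^2\lambda_{i+1}-\lambda_i\lambda_{i+1}^2)\Lambda$ (your $B_i$) are annihilated by $\{d_i\}_{i=1:9}$, and reduces unisolvence to the nonsingularity of the $9\times 9$ block of normal-derivative moments. Your structural observations are all correct and consistent with the paper's tables: $\partial_{\mathbf n}\eta_j$ is supported on the single edge $e_{j+2}$, $\partial_{\mathbf n}\psi_i$ vanishes on $e_i$, and the $\varphi_i$ contribute nontrivially on $e_{i+2}$ only through the higher moments (indeed the paper finds $f_k(\varphi_i)=g_k(\varphi_i)=0$ but $h_k(\varphi_i)=\frac{1}{60}\|\nabla\lambda_k\|$ for $k\mtequiv i+2$). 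Your Step 1 (that $\dim P_T^{4+}=18$) is a sound addition that the paper leaves implicit.

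The gap is that you stop precisely where the lemma's content lies. The entire assertion reduces, as you say, to $\det M\neq 0$, and you declare this ``the delicate step'' without performing it; nothing in your structural remarks forces the determinant to be nonzero (the sparsity pattern you identify is compatible with a singular matrix). The paper closes this by tabulating all nine moments $f_k,g_k,h_k$ applied to $\varphi_i,\psi_i,\eta_i$ and then exhibiting an explicit dual basis $b_i^0,b_i^1,b_i^2\in{\rm span}\{\varphi_j,\psi_j,\eta_j\}$ with $d_{i+9+3k}(b_j^l)=\delta_{ij}\delta_{kl}$, which is a constructive certificate of nonsingularity. To complete your argument you would need to carry out that tabulation (or an equivalent determinant evaluation); as written, the proposal is a correct reduction plus a correct identification of what remains, but not a proof.
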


\begin{proof}
There exist $\xi_i\in P_3(T)$, $i=1:9$, such that $d_i(\xi_j)=\delta_{ij}$, $i,j=1:9$. To show the unisolvence of the space, it suffices to show there exist a subset of $P^{4+}_T$ which vanishes under $\{d_i\}_{i=1:9}$ and resolves $\{d_i\}_{i=10:18}$. Set for $i=1:3$, 
$$
\varphi_i=\lambda_i^3\lambda_{i+1}-3\lambda_i^2\lambda_{i+1}^2+\lambda_i\lambda_{i+1}^3,\quad
\psi_i=\lambda_i\Lambda,\quad
\eta_i=(\lambda_i^2\lambda_{i+1}-\lambda_i\lambda_{i+1}^2)\Lambda,
$$
and
$$
f_{i}=d_{i+9}, \ \ g_{i}=d_{i+12},\ \ \mbox{and}\ \ h_{i}=d_{i+15},\ i=1:6. 
$$
Direct calculation leads to that $d_i(\varphi_j)=d_i( \psi_j)=d_i( \eta_j)=0$, $i=1:9$ and $j=1:3$. Further,

\begin{equation*}
f_k(\varphi_i)=\left\{
\begin{array}{rl}
-\frac{1}{4}\|\nabla\lambda_k\|,& k=i
\\
-\frac{1}{4}\|\nabla\lambda_k\|,&k\mtequiv i+1
\\
0,&k\mtequiv i+2
\end{array}
\right.
;\
g_k(\varphi_i)=\left\{
\begin{array}{rl}
-\frac{1}{5}\|\nabla\lambda_k\|,&k\mtequiv i
\\
-\frac{1}{20}\|\nabla\lambda_k\|,&k\mtequiv i+1
\\
0,&k\mtequiv i+2
\end{array}
\right.
;\
h_k(\varphi_i)=\left\{
\begin{array}{rl}
-\frac{1}{6}\|\nabla\lambda_k\|,&k\mtequiv i
\\
-\frac{1}{60}\|\nabla\lambda_k\|,&k\mtequiv i+1
\\
\frac{1}{60}\|\nabla\lambda_k\|,&k\mtequiv i+2
\end{array}
\right.;
\end{equation*}

$$
f_k(\psi_i)=\left\{
\begin{array}{rl}
0,& k=i
\\
-\frac{1}{12}\|\nabla\lambda_k\|,&k\mtequiv i+1
\\
-\frac{1}{12}\|\nabla\lambda_k\|,&k\mtequiv i+2
\end{array}
\right.
;\
g_k(\psi_i)=\left\{
\begin{array}{rl}
0,&k\mtequiv i
\\
-\frac{1}{30}\|\nabla\lambda_k\|,&k\mtequiv i+1
\\
-\frac{1}{20}\|\nabla\lambda_k\|,&k\mtequiv i+2
\end{array}
\right.
;\
h_k(\psi_i)=\left\{
\begin{array}{rl}
0,&k\mtequiv i
\\
-\frac{1}{60}\|\nabla\lambda_k\|,&k\mtequiv i+1
\\
-\frac{1}{30}\|\nabla\lambda_k\|,&k\mtequiv i+2
\end{array}
\right.;
$$

$$
f_k(\eta_i)=\left\{
\begin{array}{rl}
0,& k=i
\\
0,&k\mtequiv i+1
\\
0,&k\mtequiv i+2
\end{array}
\right.
;\
g_k(\eta_i)=\left\{
\begin{array}{rl}
0,&k\mtequiv i
\\
0,&k\mtequiv i+1
\\
-\frac{1}{420}\|\nabla\lambda_k\|,&k\mtequiv i+2
\end{array}
\right.
;\
h_k(\eta_i)=\left\{
\begin{array}{rl}
0,&k\mtequiv i
\\
0,&k\mtequiv i+1
\\
-\frac{1}{420}\|\nabla\lambda_k\|,&k\mtequiv i+2.
\end{array}
\right.
$$
Therefore, a further algebraic calculation shows that $\{d_i\}_{i=10:18}$ is resolved by ${\rm span}\{\varphi_i,\psi_i,\eta_i\}_{i=1:3}$. Actually, set
$$
\left\{
\begin{array}{l}
\displaystyle b_i^0=\frac{1}{\|\nabla\lambda_i\|}\left[-6(\varphi_i+\varphi_{i+2})+6(\psi_i+2\psi_{i+1}+2\psi_{i+2})+210(\eta_i+\eta_{i+1}+\eta_{i+2})\right]
\\
\displaystyle b_i^1=\frac{1}{\|\nabla\lambda_i\|}\left[ 30(\varphi_i+\varphi_{i+2})-90(\psi_{i+1}+\psi_{i+2})+420(-3\eta_i-\eta_{i+1}+3\eta_{i+2})\right]
\\
\displaystyle b_i^2=\frac{1}{\|\nabla\lambda_i\|}\left[-30(\varphi_i+\varphi_{i+2})+90(\psi_{i+1}+\psi_{i+2})+1260(\eta_i-\eta_{i+2})\right]
\end{array}
\right.,
$$
then $d_{i+9+3k}(b^l_j)=\delta_{ij}\delta_{kl}$. The proof is completed. 
\end{proof}

Associated with the finite element {\bf FE${}_{\rm eq}$}, define
\begin{multline*}
B_h^{4+}:=\{v\in L^2(\Omega):v|_T\in P_4^+(T), v\ \mbox{is\ continuous\ at}\ a\in\mathcal{X}_h;
\\ 
\int_ep_e\llt v\rrt=0,\ \forall\, p_e\in P_1(e),\  \mbox{and}\ \int_eq_e\llt\partial_{\bf n}v\rrt=0,\ \forall\, q_e\in P_2(e), \ \forall\, e\in\mathcal{E}_h^i\}.
\end{multline*}
and, associated with the boundary condition of $H^2_0(\Omega)$, define, 
$$
B_{h0}^{4+}:=\{v\in B_h^{4+}: v(a)=0,\ a\in\mathcal{X}_h^b;\ \int_ep_ev=0,\ \forall\, p_e\in P_1(e), \ \mbox{and}\  \int_eq_e\partial_{\bf n}v=0,\ \forall\,q_e \in P_2(e),\ \forall\,e\in\mathcal{E}_h^b\}.
$$

\subsection{Enriched vector quadratic and cubic elements}

\subsubsection{A vector enriched quadratic element {\bf FE${}_{\rm veq}$}}

The finite element {\bf FE${}_{\rm veq}$} is defined by $(T, P_T, D_T)$ with

\fbox{
\begin{minipage}{0.95\textwidth}

\begin{itemize}
\item $T$ is a triangle;
\item $P_T=\uP{}_T^{2+}:=(P_2(T))^2+{\rm span}\{\nabla(\lambda_1\Lambda),\nabla(\lambda_2\Lambda)\}$;
\item for $v\in H^1(T)$ set $d_i(v)=\fint_{e_i}v,\ f_i(v)=\fint_{e_i}\lambda_{i+1}v,\  i=1,2,3,$ and $g(v)=\fint_T v$, and the components of $D_T$ are: 
\begin{multline*}
\qquad D_i(\uv)=d_i(\uv^1),\ D_{i+3}(\uv)=f_i(\uv^1),\ D_7=g(\uv^1),
\\
D_{i+7}(\uv)=d_i(\uv^2),\ D_{i+10}(\uv)=f_i(\uv^2), \ D_{14}(\uv)=g(\uv^2),\ \ i=1:3.\qquad
\end{multline*}

\end{itemize}
\end{minipage}
}
Note that $\sum_{i=1}^3\lambda\Lambda=\Lambda\in P_3(T)$, and thus $\uP{}^{2+}_T=\uP{}_2(T)+{\rm span}\{\nabla(\lambda_i\Lambda)\}_{i=1:3}$.

\begin{lemma}
The finite element {\bf FE${}_{\rm veq}$} is well-defined. 
\end{lemma}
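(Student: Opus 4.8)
The plan is first to match dimensions and then to prove unisolvence by reducing it to a two–dimensional nondegeneracy statement about the enrichment. For the dimension: since $\Lambda=\lambda_1\lambda_2\lambda_3$ is cubic, $\nabla\Lambda\in\uP{}_2(T)$, and from $\nabla(\lambda_1\Lambda)+\nabla(\lambda_2\Lambda)+\nabla(\lambda_3\Lambda)=\nabla\Lambda$ only two of the three gradients $\nabla(\lambda_i\Lambda)$ are new modulo $(P_2(T))^2$, which is exactly the remark recorded just after the element is defined. I would then check that $\nabla(\lambda_1\Lambda)$ and $\nabla(\lambda_2\Lambda)$ are independent modulo $(P_2(T))^2$: a combination $\nabla((c_1\lambda_1+c_2\lambda_2)\Lambda)$ lies in $(P_2(T))^2$ only if $(c_1\lambda_1+c_2\lambda_2)\Lambda$ has vanishing quartic part, forcing $c_1=c_2=0$. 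Hence $\dim\uP{}_T^{2+}=12+2=14$, equal to the number of functionals $D_1,\dots,D_{14}$, so it suffices to show that the only $\uv\in\uP{}_T^{2+}$ annihilated by all $D_j$ is $\uv=0$.

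Next I would show that the functional array is already injective on $(P_2(T))^2$. The seven functionals acting on one scalar component are $\{d_i,f_i\}_{i=1:3}$ together with $g$. On each edge the two moments $d_i,f_i$ are equivalent to the values at the two Gauss–Legendre points, so by the Fortin–Soulie fact used in this paper $\{d_i,f_i\}_{i=1:3}$ has on $P_2(T)$ the one–dimensional kernel spanned by the Gauss–Legendre bubble $b=\lambda_1^2+\lambda_2^2+\lambda_3^2-2/3$; since $g(b)=\fint_T b\neq0$, the seven functionals together are injective on $P_2(T)$. Applying this to each component, $\uw\mapsto(D_1(\uw),\dots,D_{14}(\uw))$ is injective on $(P_2(T))^2$; denote its $12$–dimensional image by $W$. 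In particular the kernel of the full array meets $(P_2(T))^2$ only in $\{0\}$, so any kernel element is determined, modulo $(P_2(T))^2$, by its two enrichment coefficients.

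It then remains to prove that $\nabla(\lambda_1\Lambda)$ and $\nabla(\lambda_2\Lambda)$ have linearly independent images modulo $W$. The quotient of the target by $W$ is two–dimensional and is detected, component by component, by the single linear relation $L$ among the six edge moments $\{d_i,f_i\}$ that holds identically on $P_2(T)$ (the bubble relation, which carries coefficient zero on $g$). Writing $\chi=(c_1\lambda_1+c_2\lambda_2)\Lambda$, triviality of the kernel is equivalent to $(L(\partial_x\chi),L(\partial_y\chi))\neq0$ for every $(c_1,c_2)\neq0$. Here I would exploit that $\chi$ vanishes on $\partial T$, so its tangential derivative vanishes on each edge and $\partial_{x_a}\chi|_{e_i}=(\mathbf n_i)_a\,\partial_{\mathbf n_i}\chi$; consequently the pair above equals $\sum_{i=1}^3\big(\fint_{e_i}\mu_i\,\partial_{\mathbf n_i}\chi\big)\mathbf n_i$, where $\mu_i$ is the linear edge weight carried by $L$. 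Since $\partial_{\mathbf n_i}\chi|_{e_i}=\gamma_i(c_1\lambda_1+c_2\lambda_2)\lambda_{i+1}\lambda_{i+2}$ with $\gamma_i=\partial_{\mathbf n_i}\lambda_i$ a nonzero constant, each edge term is an explicit linear functional of $(c_1,c_2)$, and the claim reduces to nonsingularity of the resulting $2\times2$ matrix.

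The principal work is this last step, and it is where I expect the genuine effort to lie. In the spirit of the other unisolvence proofs in this section, I would make it concrete by first pinning down the bubble relation $L$ (equivalently the weights $\mu_i$) from the identity $L|_{P_2(T)}=0$, then evaluating the three edge integrals and assembling the $2\times2$ matrix against $(c_1,c_2)$; nonvanishing of its determinant is then a direct algebraic calculation. An equivalent and perhaps tidier route, matching the style used for {\bf FE}${}_{\rm ec}$ and {\bf FE}${}_{\rm eq}$, is to exhibit explicitly two functions of $\uP{}_T^{2+}$ dual to the two ``excess'' directions and vanishing under $W$, so that the final system is visibly triangular. Either way the obstacle is purely the bookkeeping of these edge moments; the conceptual content sits entirely in the Fortin–Soulie injectivity above and in the fact that the two curl-free enrichments $\nabla(\lambda_i\Lambda)$ supply exactly the two missing directions.
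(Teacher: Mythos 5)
Your proposal is correct, but it takes a genuinely different route from the paper's. The paper proves unisolvence by brute force: it tabulates all fourteen functionals against the fourteen basis functions $(\eta_i,0)^\top$, $(0,\eta_i)^\top$, $\nabla(\lambda_1\Lambda)$, $\nabla(\lambda_2\Lambda)$ and evaluates the full $14\times 14$ determinant, which comes out as $\tfrac{103}{501530650214400}\,\nabla\lambda_1\cdot\curl\lambda_2\neq 0$. You instead use the structure of the space: the Fortin--Soulie fact that the six linear edge moments have on $P_2(T)$ the one-dimensional kernel ${\rm span}\{\lambda_1^2+\lambda_2^2+\lambda_3^2-2/3\}$, which the cell average detects, gives injectivity on $(P_2(T))^2$ at once, and the whole question collapses to whether the single moment relation $L$, applied componentwise, separates the two gradient bubbles. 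The one step you defer does go through, and I checked it: the relation is $L(v)=\sum_{i=1}^3\fint_{e_i}(\lambda_{i+2}-\lambda_{i+1})\,v$; one finds $\fint_{e_i}(\lambda_{i+2}-\lambda_{i+1})\lambda_j\lambda_{i+1}\lambda_{i+2}$ equals $0$, $-1/60$, $1/60$ according as $j\mtequiv i$, $j\mtequiv i+1$, $j\mtequiv i+2$; and since $\gamma_i\mathbf{n}_i=\nabla\lambda_i$ independently of the orientation of $\mathbf{n}_i$, your vector $\sum_i\bigl(\fint_{e_i}\mu_i\partial_{\mathbf{n}_i}\chi\bigr)\mathbf{n}_i$ equals $\tfrac{1}{60}\left[(c_1-2c_2)\nabla\lambda_1+(2c_1-c_2)\nabla\lambda_2\right]$, whose vanishing forces $c_1=c_2=0$ because the matrix $\bigl(\begin{smallmatrix}1&-2\\2&-1\end{smallmatrix}\bigr)$ has determinant $3$. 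What the paper's computation buys is that it is entirely mechanical and self-contained; what yours buys is an explanation of why the element works --- the enrichments $\nabla(\lambda_i\Lambda)$ supply exactly the two directions invisible to the quadratic part, and the only possible degeneracy is the interaction of their normal traces with the Fortin--Soulie relation --- together with a far smaller final determinant, at the price of quoting the Fortin--Soulie kernel and of pinning down $L$ explicitly.
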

\begin{proof}
To show the finite element is well-defined, it suffices to show $P_T$ is unisolvent versus $D_T$. 

The shape function space can be rewritten as 
$$
\uP{}_T^{2+}=({\rm span}\{\lambda_i^2,\lambda_i\lambda_{i+1}\}_{i=1}^3)^2\oplus {\rm span}\{\nabla(\lambda_1\Lambda),\nabla(\lambda_2\Lambda)\}.
$$

Direct calculation leads to that, $i,k=1:3$, $l=1,2$, 
$$
d_k(\lambda_i^2)=\left\{
\begin{array}{rl}
0,&i=k \\ \frac{1}{3},&k\neq i
\end{array}
\right.;
\quad
f_k(\lambda_i^2)=\left\{
\begin{array}{rl}
0, &k\mtequiv i
\\
\frac{1}{12},&k\mtequiv i+1
\\
\frac{1}{4},&k\mtequiv i+2
\end{array}
\right.;
\quad
g(\lambda_i^2)=\frac{1}{6};
$$
$$
d_k(\lambda_i\lambda_{i+1})=\left\{
\begin{array}{rl}
0,& k=i
\\
0,&k\mtequiv i+1
\\
\frac{1}{6},& k=i+2
\end{array}
\right.;
\quad
f_k(\lambda_i\lambda_{i+1})=\left\{
\begin{array}{rl}
0,&k\mtequiv i,
\\
0,&k\mtequiv i+1
\\
\frac{1}{12},&k\mtequiv i+2
\end{array}
\right.;
\quad
g(\lambda_i\lambda_{i+1})=\frac{1}{12};
$$
$$
d_k(\partial_l(\lambda_i\Lambda))=\left\{
\begin{array}{rl}
0,& k=i
\\
\frac{1}{12}\partial_l\lambda_k,&k\neq i
\end{array}
\right.;
\quad
f_k(\partial_l(\lambda_i\Lambda))=\left\{
\begin{array}{rl}
0,&k\mtequiv i
\\
\frac{1}{90}\partial_l\lambda_k,&k\mtequiv i+1
\\
\frac{1}{60}\partial_l\lambda_k,&k\mtequiv i+2
\end{array}
\right.;
\quad
g(\partial_l(\lambda_i\Lambda))=0.
$$

Now, set $\eta_i=\lambda_i^2$, $\eta_i=\lambda_i\lambda_{i+1}$, and rewrite the basis functions of $\uP{}^{2+}_T$ as:
$$
\phi_i=(\eta_i,0)^\top,\ \phi_{i+6}=(0,\eta_i)^\top,\  i=1:6;\ \phi_{13}=\nabla(\lambda_1\Lambda),\ \phi_{14}=\nabla(\lambda_2\Lambda).
$$
Let $M$ be the stiffness matrix with $M_{ij}=D_i(\phi_j)$. Then direct calculation leads to that 
$$
\det(M)=\frac{103}{501530650214400}(\nabla\lambda_1\cdot\curl\lambda_2).
$$
Note that $\nabla\lambda_1\cdot\curl\lambda_2=\nabla\lambda_1\cdot(\nabla\lambda_2)^\perp\neq 0$, (otherwise, $\nabla\lambda_1\parallel \nabla\lambda_2$, namely $e_1\parallel e_2$), the stiffness matrix $M$ is non-singular, and the unisolvence is proved. 
\end{proof}

Associated with the finite element {\bf FE${}_{\rm veq}$}, define

$$
\uG{}_h^{2+}:=\{\uv\in \uL^2(\Omega):\uv|_T\in \uP{}_2^+(T),\ \int_ep_e\llt\uv^j\rrt=0, \ \forall\, p_e\in P_1(e),\ \forall\, e\in\mathcal{E}_h^i,\ j=1,2\},
$$
and, corresponding to the boundary condition of $\uH{}^1_0(\Omega)$, define
$$
\uG{}_{h0}^{2+}:=\{\uv\in \uG{}_h^{2+}: \int_ep_e\uv^j=0,\ \forall\,p_e\in P_1(e),\ \forall\, e\in\mathcal{E}_h^b, j=1,2\}.
$$

\subsubsection{A vector enriched cubic element {\bf FE${}_{\rm vec}$}}

The finite element {\bf FE${}_{\rm vec}$} is defined by $(T, P_T, D_T)$ with

\fbox{
\begin{minipage}{0.95\textwidth}

\begin{itemize}
\item $T$ is a triangle;
\item $P_T=\uP{}_T^{3+}:=(P_3(T))^2+{\rm span}\{\nabla(\lambda_i^2\lambda_{i+1}-\lambda_i\lambda_{i+1}^2)\Lambda\}_{i=1:3}$;
\item the components of $D_T$ for $v\in H^2(T)$ are:
\begin{multline*}
d_i(\uv)=\fint_{e_i}\uv^1,\ d_{i+3}(\uv)=\fint_{e_i}\lambda_{i+1}\uv^1,\ d_{i+6}(\uv)=\fint_{e_i}\lambda_{i+1}^2\uv^1,\ i=1,2,3,\ d_{10}(\uv)=\fint_T \uv^1;
\\
d_i(\uv)=\fint_{e_i}\uv^2,\ d_{i+3}(\uv)=\fint_{e_i}\lambda_{i+1}\uv^2,\ d_{i+6}(\uv)=\fint_{e_i}\lambda_{i+1}^2\uv^2,\ i=11,12,13,\ d_{20}(\uv)=\fint_T \uv^2;
\\
d_{j+20}(\uv)=\fint_T \upsi{}_j \uv,\ j=1:3,\qquad\qquad\qquad\qquad\qquad\qquad\qquad\qquad
\\ \mbox{where}\ \upsi{}_j\in \uP{}_3(T)+{\rm span}\{\nabla(\lambda_i^2\lambda_{i+1}-\lambda_i\lambda_{i+1}^2)\Lambda\},\ \mbox{and}\ d_i(\upsi{}_j)=0,\ i=1:20.
\end{multline*}
\end{itemize}
\end{minipage}
}

\begin{lemma}
The finite element {\bf FE${}_{\rm vec}$} is well-defined. 
\end{lemma}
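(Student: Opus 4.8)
The plan is to confirm the dimension count $\dim P_T=23$, reduce the unisolvence of $\{d_i\}_{i=1:20}$ to a single scalar statement on $P_3(T)$, and then close with a Gram-matrix argument for the last three functionals. First I would verify that the three enriching fields $\nabla[(\lambda_i^2\lambda_{i+1}-\lambda_i\lambda_{i+1}^2)\Lambda]$, $i=1,2,3$, are linearly independent modulo $\uP{}_3(T)$. Writing $F_i:=(\lambda_i^2\lambda_{i+1}-\lambda_i\lambda_{i+1}^2)\Lambda$, each $F_i$ has total degree $6$, so a combination $\sum_i c_i\nabla F_i=\nabla(\sum_i c_iF_i)$ lies in $\uP{}_3(T)$ only if $\sum_i c_iF_i$ has degree at most $4$; this forces the degree-six leading parts to cancel, and since (using $\lambda_1+\lambda_2+\lambda_3=1$ to write them as homogeneous sextics in two independent linear forms) these three leading parts are linearly independent, we get $c=0$. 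Hence $\dim P_T=20+3=23$, matching the number of functionals, and it remains to prove the $23$ functionals are linearly independent on $P_T$.

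I would then exploit that $\{d_i\}_{i=1:20}$ decouples the two components: $d_1,\dots,d_{10}$ involve only $\uv^1$ and $d_{11},\dots,d_{20}$ only $\uv^2$, so on $\uP{}_3(T)$ they reduce to two independent copies of the scalar functional set $\{\fint_{e_i}w,\ \fint_{e_i}\lambda_{i+1}w,\ \fint_{e_i}\lambda_{i+1}^2w\}_{i=1:3}\cup\{\fint_T w\}$ acting on $P_3(T)$. The crux is that this ten-functional set is unisolvent on $P_3(T)$. To prove it, suppose $w\in P_3(T)$ annihilates all ten. On each edge $e_i$ the functions $1,\lambda_{i+1},\lambda_{i+1}^2$ span $P_2(e_i)$, so vanishing of the three edge moments forces the trace $w|_{e_i}$ to be $L^2(e_i)$-orthogonal to $P_2(e_i)$, i.e. $w|_{e_i}=\alpha_i L_3^{(i)}$ with $L_3^{(i)}$ the cubic Legendre polynomial on $e_i$, which (after normalization) takes opposite values $\pm\alpha_i$ at the two endpoints. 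Imposing that $w$ be single-valued at the three vertices then gives a homogeneous $3\times3$ cyclic system in $(\alpha_1,\alpha_2,\alpha_3)$ whose only solution is trivial; hence $w$ vanishes on $\partial T$, so $w=c\,\Lambda$, and the interior functional $\fint_T w=c\fint_T\Lambda=0$ gives $c=0$ and $w=0$.

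With the scalar claim in hand, $\{d_i\}_{i=1:20}$ already has rank $20$ on $\uP{}_3(T)\subset P_T$; since its codomain is $\mathbb{R}^{20}$ its rank on all of $P_T$ is exactly $20$, so by rank--nullity the kernel $K:=\{\uv\in P_T:d_i(\uv)=0,\ i=1:20\}$ has dimension $23-20=3$. This is precisely what makes the definition legitimate: the required functions $\upsi_1,\upsi_2,\upsi_3$ exist and may be taken as any basis of $K$. For the final unisolvence, let $\uv\in P_T$ annihilate all $23$ functionals; the first twenty put $\uv\in K$, so $\uv=\sum_k c_k\upsi_k$, and the last three yield $0=d_{j+20}(\uv)=\sum_k c_k\fint_T\upsi_j\cdot\upsi_k$ for $j=1,2,3$. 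Since the $\upsi_j$ are linearly independent in $\uL^2(T)$, the Gram matrix $(\fint_T\upsi_j\cdot\upsi_k)$ is symmetric positive definite, whence $c=0$ and $\uv=0$.

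The step I expect to be the main obstacle is the scalar unisolvence: because only three moments are imposed per edge, they control merely the $P_2$ part of each cubic trace, so a bare dimension match is inconclusive, and the real content is the vertex-compatibility argument that kills the residual cubic Legendre traces. Once that is settled, the leading-term dimension count and the concluding Gram-matrix argument are routine.
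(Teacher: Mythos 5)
Your argument is correct and follows essentially the same route as the paper's own (very terse) proof: reduce everything to the unisolvence of the ten scalar moments on $P_3(T)$, obtain the $\upsi{}_j$ as a basis of the kernel of the first twenty functionals, and close with the Gram-matrix step on the last three. You supply the details the paper dismisses as ``standard'' --- notably the Legendre-trace/vertex-compatibility proof of the scalar unisolvence and the leading-term computation giving $\dim P_T=23$ --- and these are all sound.
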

\begin{proof}
There exist $\{\xi_j\}_{j=1:10}\subset P_3(T)$, such that $d_i((\xi_j,0)^\top)=\delta_{ij}$, $1\leqslant i,j\leqslant 10$. Therefore, the existence of $\upsi{}_j$, $j=1:3$, is confirmed. Moreover, $P_T=\uP{}_3(T)+{\rm span}\{\upsi{}_j\}_{j=1:3}$. The unisolvence of $D_T$ versus $P_T$ can be proved by standard approach. The proof is completed. 
\end{proof}

Associated with the finite element {\bf FE${}_{\rm vec}$}, define

$$
\uG{}_h^{3+}:=\{\uv\in \uL^2(\Omega):\uv|_T\in \uP{}_3^+(T),\ \int_ep_e\llt\uv^j\rrt=0,\ \forall\, p_e\in P_2(e),\ \forall\, e\in\mathcal{E}_h^i,\ j=1,2\},
$$
and, corresponding to the boundary condition of $\uH{}^1_0(\Omega)$, define
$$
\uG{}_{h0}^{3+}:=\{\uv\in \uG{}_h^{3+}: \int_ep_e\uv^j=0\ \forall\, p_e\in P_2(e),\ e\in\mathcal{E}_h^b,\ j=1,2\}.
$$

\section{An optimal piecewise cubic finite element method for biharmonic equation}
\label{sec:cubic}

In this section, we consider the biharmonic equation: 
\begin{equation}
\left\{
\begin{array}{rl}
\displaystyle\Delta^2u=f&\mbox{in}\ \Omega;
\\
\displaystyle u=0,&\mbox{on}\ \partial\Omega.
\\
\displaystyle\partial_{\mathbf n}u=0,&\mbox{on}\ \partial\Omega.
\end{array}
\right.
\end{equation}
The variational problem is to find $u\in H^2_0(\Omega)$, such that 
\begin{equation}\label{eq:bhvp}
(\nabla^2u,\nabla^2v)=(f,v),\quad\forall\,v\in H^2_0(\Omega).
\end{equation}
Now given a triangulation $\mathcal{T}_h$, define
\begin{multline*}
B_h^{3}:=\{v\in L^2(\Omega)|\ v|_T\in P_3(T), v\ \mbox{is\ continuous\ at}\ a\in\mathcal{X}_h;
\\ 
\int_e\llt v\rrt=0,\ \mbox{and}\  \int_ep_e\llt\partial_{\bf n}v\rrt=0, \forall\, p_e\in P_1(e),\ \forall\, e\in\mathcal{E}_h^i\},
\end{multline*}
and
$$
B_{h0}^{3}:=\{v\in B_h^{3}: v(a)=0,\ a\in\mathcal{X}_h^b;\ \int_ev=0,\ \mbox{and}\ \int_ep_e\partial_{\bf n}v=0,\  \forall\, p_e\in P_1(e),\ \forall\,e\in\mathcal{E}_h^b\}.
$$
\begin{remark}\label{rem:strB3dg}
Evidently, $B^3_{h0}=\{w_h\in B^{3+}_{h0}:w_h|_T\in\mathcal{T}_h,\ \forall\,T\in\mathcal{T}_h\}$ and $B_{h0}^{3}=\{v_h\in A^3_{h0}: \ \int_ep_e\llt\partial_{\bf n}v\rrt=0,\ \forall\, p_e\in P_1(e), \ \forall\,e\in\mathcal{E}_h\}.$
\end{remark}
The discretization is to find $u_h\in B^3_{h0}$, such that 
\begin{equation}\label{eq:bhvpp3}
(\nabla_h^2u_h,\nabla^2_hv_h)=(f,v_h),\quad\forall\,v_h\in B^3_{h0}.
\end{equation}
By the weak continuity of $B^3_{h0}$, the discrete Poincar\'e-Friedrichs inequality holds as below. The well-posedness of \eqref{eq:bhvpp3} is confirmed. 
\begin{lemma}
(c.f., e.g, \cite{Brenner.S;Wang.K;Zhao.J2004}) There exists a constant $C$ depending on the regularity of the grids only such that 
\begin{equation}
\|w_h\|_{0,\Omega}^2+\|\nabla_hw_h\|_{0,\Omega}^2\leqslant C\|\nabla_h^2w_h\|_{0,\Omega}^2,\ \ \forall\,w_h\in B^3_{h0}+H^2_0(\Omega).
\end{equation}
\end{lemma}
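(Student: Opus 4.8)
The plan is to reduce the second-order inequality to the first-order (piecewise $H^1$) Poincar\'e--Friedrichs inequality of Brenner type, applied first to $w_h$ itself and then componentwise to $\nabla_h w_h$, using the weak-continuity conditions built into $B^3_{h0}$ to annihilate the jump correction terms. The fundamental tool I would invoke is the broken Poincar\'e--Friedrichs inequality for piecewise $H^1$ functions: for any $v$ with $v|_T\in H^1(T)$ one has
$$
\|v\|_{0,\Omega}^2 \lesssim \|\nabla_h v\|_{0,\Omega}^2 + \sum_{e\in\mathcal{E}_h^i}\frac{1}{|e|}\Big(\int_e\llt v\rrt\Big)^2 + \sum_{e\in\mathcal{E}_h^b}\frac{1}{|e|}\Big(\int_e v\Big)^2,
$$
as established in \cite{Brenner.S;Wang.K;Zhao.J2004}.

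First I would observe that it suffices to treat $w_h\in B^3_{h0}+H^2_0(\Omega)$ as a single object: any $H^2_0(\Omega)$ component contributes no jump across interior edges (its traces and those of its gradient are single-valued) and vanishes together with its normal derivative on $\partial\Omega$. Consequently $w_h$ inherits all the weak-continuity functionals of $B^3_{h0}$: it is continuous at every $a\in\mathcal{X}_h$ (and vanishes at $a\in\mathcal{X}_h^b$), $\int_e\llt w_h\rrt=0$ on interior edges (and $\int_e w_h=0$ on boundary edges), and $\int_e\llt\partial_{\mathbf n}w_h\rrt=0$ on interior edges (and $\int_e\partial_{\mathbf n}w_h=0$ on boundary edges).

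Applying the broken inequality directly to $v=w_h$, the interior jump means $\int_e\llt w_h\rrt$ and the boundary means $\int_e w_h$ all vanish, yielding $\|w_h\|_{0,\Omega}\lesssim\|\nabla_h w_h\|_{0,\Omega}$. Next I would apply the same inequality componentwise to $v=\partial_j w_h$, $j=1,2$, whose broken gradient is controlled by $\|\nabla_h^2 w_h\|_{0,\Omega}$. The only remaining task is to kill the jump means $\int_e\llt\partial_j w_h\rrt$. Decomposing $\partial_j=n_j\partial_{\mathbf n}+t_j\partial_{\mathbf t}$ along each edge $e$, the normal part $\int_e\llt\partial_{\mathbf n}w_h\rrt$ vanishes by the weak continuity of the normal derivative, while the tangential part satisfies $\int_e\llt\partial_{\mathbf t}w_h\rrt=\int_e\frac{d}{dt}\llt w_h\rrt=\llt w_h\rrt\big|_{\partial e}=0$ because $w_h$ is continuous at the two endpoints of $e$. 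Boundary edges are handled identically, using $\int_e\partial_{\mathbf n}w_h=0$ and the vanishing of $w_h$ at boundary vertices. Hence $\|\nabla_h w_h\|_{0,\Omega}\lesssim\|\nabla_h^2 w_h\|_{0,\Omega}$, and combining with the previous bound gives the claim.

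The main obstacle is the tangential jump of the gradient: unlike the normal derivative, it does not appear directly among the degrees of freedom, so I must route its control through the vertex continuity, rewriting $\llt\partial_{\mathbf t}w_h\rrt$ as the tangential derivative of $\llt w_h\rrt$ and invoking the fundamental theorem of calculus along $e$ together with the pointwise matching at vertices. Making this step rigorous requires the edgewise trace $w_h|_e$ to lie in $H^1(e)$ (which holds since $w_h|_T\in P_3(T)$) and careful bookkeeping of edge orientation and of boundary vertices; once that is in place, every jump correction term produced by the broken Poincar\'e--Friedrichs inequality is exactly annihilated by the defining conditions of $B^3_{h0}$.
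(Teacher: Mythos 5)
Your proof is correct, and it reconstructs exactly the argument the paper delegates to the citation: the paper itself gives no proof of this lemma, merely invoking the Poincar\'e--Friedrichs inequalities for piecewise $H^1$/$H^2$ functions of Brenner--Wang--Zhao, whose jump terms are annihilated by the weak-continuity conditions defining $B^3_{h0}$. Your handling of the tangential gradient jump via the fundamental theorem of calculus and vertex continuity is precisely the mechanism that makes the cited result applicable to this Morley-type space, so the approach is essentially the same as the paper's.
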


The main result of this section is the theorem below.
\begin{theorem}\label{thm:ratep3}
Let $u$ and $u_h$ be the solutions of \eqref{eq:bhvp} and \eqref{eq:bhvpp3}, respectively. If $u\in H^4(\Omega)$, then, with $C$ a generic constant depending on $\Omega$ and the regularity of the grid only,
\begin{equation}\label{eq:h2error3}
\|\nabla_h^2(u-u_h)\|_{0,\Omega}\leqslant Ch^2|u|_{4,\Omega}.
\end{equation}
When $\Omega$ is convex, 
\begin{equation}\label{eq:h1error3}
\|\nabla_h(u-u_h)\|_{0,\Omega}\leqslant Ch^3|u|_{4,\Omega}.
\end{equation}
\end{theorem}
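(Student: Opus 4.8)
The plan is to carry out the standard nonconforming error analysis through the second Strang lemma, while using the discretized Stokes complex to replace the missing local interpolation into $B^3_{h0}$. Writing $e_h=u-u_h$ and invoking the discrete Poincar\'e--Friedrichs inequality stated above for coercivity, I would first reduce the problem to
\[
\|\nabla_h^2 e_h\|_{0,\Omega}\;\lesssim\;\inf_{v_h\in B^3_{h0}}\|\nabla_h^2(u-v_h)\|_{0,\Omega}\;+\;\sup_{0\neq w_h\in B^3_{h0}}\frac{E_h(u,w_h)}{\|\nabla_h^2 w_h\|_{0,\Omega}},
\]
where $E_h(u,w_h)=\sum_{T\in\mathcal{T}_h}\int_T\nabla^2u:\nabla^2w_h-(f,w_h)$ is the consistency functional. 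The first term is the approximation error and the second is the consistency (nonconformity) error; I would treat them separately.

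For the consistency error I would integrate by parts twice on each cell, using $\Delta^2u=f$, so that $E_h(u,w_h)$ collapses into a sum over edges of the jumps $\llt w_h\rrt$ and $\llt\partial_{\mathbf n}w_h\rrt$ weighted by traces of second and third derivatives of $u$. On each interior edge the function jump is annihilated in mean by $\int_e\llt w_h\rrt=0$ together with vertex continuity, and the normal-derivative jump is $P_1(e)$-orthogonal by the defining condition $\int_e p_e\llt\partial_{\mathbf n}w_h\rrt=0$ for all $p_e\in P_1(e)$. These two orthogonalities let me subtract the edgewise $P_0$- and $P_1$-projections of the corresponding traces of $\nabla^2u$ before applying Cauchy--Schwarz; the resulting data factors of order $h^{1/2}|u|_{4,T}$ and $h^{3/2}|u|_{4,T}$, paired with the scaled trace bounds for the jumps, produce the estimate $E_h(u,w_h)\lesssim h^2|u|_{4,\Omega}\,\|\nabla_h^2 w_h\|_{0,\Omega}$. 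Boundary edges are handled identically using the boundary conditions built into $B^3_{h0}$.

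The genuinely hard step is the approximation error, because $B^3_{h0}$ is defined globally and is not a Ciarlet triple, so no elementwise interpolation is available; this is where the Stokes complex enters. The operator $\nabla_h$ maps $B^3_{h0}$ into the velocity space $\uG{}^{2+}_{h0}$ built on {\bf FE}${}_{\rm veq}$, and the canonical interpolations commute across the complex. Concretely, for $u\in H^2_0(\Omega)\cap H^4(\Omega)$ the field $\nabla u\in\uH{}^1_0(\Omega)$ satisfies $\rot\nabla u=0$; its {\bf FE}${}_{\rm veq}$-interpolant $I_h\nabla u\in\uG{}^{2+}_{h0}$ is therefore discretely rot-free, and exactness of the complex yields a $w_h\in B^3_{h0}$ with $\nabla_h w_h=I_h\nabla u$. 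Since then $\|\nabla_h^2(u-w_h)\|_{0,\Omega}=\|\nabla_h(\nabla u-I_h\nabla u)\|_{0,\Omega}$ and {\bf FE}${}_{\rm veq}$ is a genuine second-degree element, the classical interpolation estimate gives $\|\nabla_h^2(u-w_h)\|_{0,\Omega}\lesssim h^2|\nabla u|_{3,\Omega}\lesssim h^2|u|_{4,\Omega}$, which combined with the consistency bound proves \eqref{eq:h2error3}. The crux is thus the commuting relation $\nabla_h\circ I_h^{B^3}=I_h\circ\nabla$ and the exactness of the discrete Stokes complex, which are precisely what the degree-of-freedom choices of the elements in Section~\ref{sec:fes} are engineered to provide; I expect this to be the main obstacle.

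Finally, for the convex-domain estimate \eqref{eq:h1error3} I would run an Aubin--Nitsche duality argument built on the biharmonic dual problem with data tied to $\nabla_h e_h$. Testing the error equation against the interpolant of the dual solution and splitting as before, the bound reduces to one approximation contribution together with two consistency contributions, one from $u$ and one from the dual solution, each controlled by the same jump-moment mechanism of the second paragraph. The extra power of $h$ comes from the additional regularity of the biharmonic solution operator available on a convex domain, which upgrades the dual consistency factors by one order; balancing $h^2\cdot h$ then yields $\|\nabla_h e_h\|_{0,\Omega}\lesssim h^3|u|_{4,\Omega}$. The delicate point is to demand from the dual solution only the regularity actually guaranteed on a convex polygon, consistent with gaining exactly one order.
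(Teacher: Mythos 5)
Your overall architecture (Strang lemma, consistency via jump orthogonalities, approximation via the discrete Stokes complex, duality for the lower-order norm) is the paper's, and your treatment of the consistency term and of the convex-domain duality argument is workable: the paper estimates part of the consistency functional by a volume argument with the conforming interpolation $\Pi_h^2$ rather than by edgewise projections, but your edge-based version with $P_0$- and $P_1$-moment orthogonalities delivers the same $O(h^2)$ bound. The genuine gap is in the approximation error. You interpolate $\nabla u$ by the canonical {\bf FE}${}_{\rm veq}$ interpolant $I_h\nabla u\in \uG{}^{2+}_{h0}$ and invoke exactness to produce a preimage under $\nabla_h$. But the exact complex containing $\uG{}^{2+}_{h0}$ is the \emph{auxiliary} one of Lemma \ref{lem:dsc3+}, whose first space is $B^{3+}_{h0}$, not $B^3_{h0}$; since $I_h\nabla u$ generically activates the enrichment ${\rm span}\{\nabla(\lambda_1\Lambda),\nabla(\lambda_2\Lambda)\}$ (these bubbles have nonzero edge moments), the lifted function $w_h$ satisfies $\nabla w_h|_T\notin \uP{}_2(T)$ and hence lies in $B^{3+}_{h0}\setminus B^3_{h0}$. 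The Strang infimum must be taken over $B^3_{h0}$, where the discrete problem is posed, so this construction does not close the argument; nor can you simply discard the cellwise $\Lambda P_1$ components, since doing so destroys the normal-derivative moment continuity that defines $B^3_{h0}$. Note also that there is no canonical interpolation into $\uG{}^2_{h0}$ to fall back on: the Fortin--Soulie degrees of freedom are not unisolvent on $\uP{}_2(T)$, which is precisely why the space is written as $\uS{}^2_{h0}\oplus\undertilde{\mathcal{B}}{}^2_{h0}$.

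The paper's fix (Lemma \ref{lem:approxb3}) is to replace the interpolant by the velocity component $\uphi{}_h\in\uG{}^2_{h0}$ of the discrete \emph{rotated Stokes problem} with exact solution $(\nabla u,0)$: the inf-sup stability of the $\uG{}^2_{h0}$--$\mathbb{P}^1_{h0}$ pair (Lemma \ref{lem:pGL}) gives $\|\nabla_h(\nabla u-\uphi{}_h)\|_{0,\Omega}\lesssim h^2|u|_{4,\Omega}$, the discrete constraint forces $\rot_h\uphi{}_h=0$ exactly, and Theorem \ref{thm:exactb3} (the complex for the \emph{unenriched} spaces) then lifts $\uphi{}_h$ to $w_h\in B^3_{h0}$ with $\nabla_h w_h=\uphi{}_h$. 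You should substitute this mechanism for your commuting-interpolation step; the same substitution also supplies the $L^2$-velocity estimate needed to gain the extra order in \eqref{eq:h1error3} on convex domains. With that replacement your proof outline aligns with the paper's.
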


We postpone the proof of Theorem \ref{thm:ratep3} after some technical lemmas.

\subsection{Structural properties of $B^3_{h0}$}
\subsubsection{An auxiliary discretized Stokes complexes}

\begin{lemma}\label{lem:dsc3+}
A discretized Stokes complex holds as below:
\begin{equation}
\begin{array}{ccccccccc}
0 & \longrightarrow & B_{h0}^{3+} & \xrightarrow{\bs{\mrm{\nabla}}_h} & \uG{}_{h0}^{2+} & \xrightarrow{\mrm{rot}_h} & \mathbb{P}_{h0}^1  & \longrightarrow & 0.
\end{array}
\end{equation}
\end{lemma}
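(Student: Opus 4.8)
The plan is to verify that both maps are well defined, dispatch exactness at the two ends together with the trivial inclusion $\mathrm{im}(\nabla_h)\subseteq\ker(\rot_h)$, and then settle the middle by a dimension identity resting on the unisolvence of $\mathbf{FE}_{\mathrm{ec}}$ and $\mathbf{FE}_{\mathrm{veq}}$ and on Euler's formula.

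First I would check well-definedness. Locally $\nabla(P_3^+(T))\subseteq\uP{}_2^+(T)$, since $\nabla P_3(T)\subseteq\uP{}_2(T)$ and the functions $\nabla(\lambda_i\Lambda)$ are exactly the enrichers of $\uP{}_2^+(T)$; likewise $\rot\,\uP{}_2^+(T)\subseteq P_1(T)$ because $\rot$ annihilates those gradient enrichers. For $v\in B_{h0}^{3+}$ the normal $P_1$-moment continuity $\int_e p_e\llt\partial_{\mathbf n}v\rrt=0$ is built into the definition of $B_h^{3+}$; for the tangential part I use that $\Lambda$ vanishes on every edge, so the trace $v|_e\in P_3(e)$, and integrate by parts along $e$: since $v$ is continuous at the endpoints, $\int_e\llt v\rrt=0$, and $\partial_{\mathbf t}p_e$ is constant, I obtain $\int_e p_e\llt\partial_{\mathbf t}v\rrt=0$ for all $p_e\in P_1(e)$. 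As $\mathbf n,\mathbf t$ are constant along a straight edge, the $P_1$-moments of $\llt(\nabla v)^j\rrt$ are invertible combinations of those of $\llt\partial_{\mathbf n}v\rrt$ and $\llt\partial_{\mathbf t}v\rrt$, which yields the interior conditions of $\uG{}_{h0}^{2+}$, and the boundary conditions transfer in the same way. Finally, for any $\uv\in\uG{}_{h0}^{2+}$, Green's formula gives $\int_\Omega\rot_h\uv=\sum_{e\in\mathcal{E}_h^i}\int_e\llt\uv\cdot\mathbf t\rrt+\sum_{e\in\mathcal{E}_h^b}\int_e\uv\cdot\mathbf t=0$ by the constant ($p_e=1$) moment conditions, so $\rot_h\uv\in\mathbb{P}_{h0}^1$.

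Next the two ends. The inclusion $\mathrm{im}(\nabla_h)\subseteq\ker(\rot_h)$ is immediate from $\rot\nabla=0$ cellwise. If $\nabla_hv=0$ then $v$ is piecewise constant, hence globally constant by vertex-continuity on the connected mesh, hence $0$ by the homogeneous boundary data; thus $\nabla_h$ is injective. For surjectivity of $\rot_h$ onto $\mathbb{P}_{h0}^1$ I note $\uG{}_{h0}^2\subseteq\uG{}_{h0}^{2+}$, so the rot-form of the Fortin--Soulie inf-sup condition ($k=2$ in Lemma~\ref{lem:pGL} with the subsequent remark) holds already over the smaller space and therefore over $\uG{}_{h0}^{2+}$; an inf-sup condition forces $\rot_h$ to be onto. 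It then remains to prove middle exactness, which by these three facts is equivalent to $\dim B_{h0}^{3+}-\dim\uG{}_{h0}^{2+}+\dim\mathbb{P}_{h0}^1=0$. Because $\mathbf{FE}_{\mathrm{ec}}$ and $\mathbf{FE}_{\mathrm{veq}}$ are unisolvent, each dimension equals the number of global degrees of freedom surviving the interfacial identifications: $\dim B_{h0}^{3+}=|\mathcal{X}_h^i|+3|\mathcal{E}_h^i|$ (one value per interior vertex; one average and two normal $P_1$-moments per interior edge), $\dim\uG{}_{h0}^{2+}=4|\mathcal{E}_h^i|+2|\mathcal{T}_h|$ (two $P_1$-edge-moments per component per interior edge, plus two cell moments per triangle), and $\dim\mathbb{P}_{h0}^1=3|\mathcal{T}_h|-1$. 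Substituting,
\begin{equation*}
\dim B_{h0}^{3+}-\dim\uG{}_{h0}^{2+}+\dim\mathbb{P}_{h0}^1=|\mathcal{X}_h^i|-|\mathcal{E}_h^i|+|\mathcal{T}_h|-1,
\end{equation*}
and expressing the interior counts through the totals via $|\mathcal{X}_h^b|=|\mathcal{E}_h^b|$ and Euler's relation $|\mathcal{X}_h|-|\mathcal{E}_h|+|\mathcal{T}_h|=1$ for the simply connected $\Omega$, the right-hand side equals $|\mathcal{E}_h^b|-|\mathcal{X}_h^b|=0$. This gives the identity, hence $\mathrm{im}(\nabla_h)=\ker(\rot_h)$.

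The routine parts are exactness at the ends; the delicate step is the middle, and specifically the right to read the dimensions off the global degrees of freedom. This is exactly where unisolvence of $\mathbf{FE}_{\mathrm{ec}}$ and $\mathbf{FE}_{\mathrm{veq}}$ is indispensable: it guarantees that the interfacial moment constraints merely identify degrees of freedom rather than over-constrain the spaces --- the very feature that fails for the target spaces $B_{h0}^3$ and $B_{h0}^4$, and that motivates introducing these enriched auxiliary complexes. A secondary care point is the tangential-moment computation in the well-definedness step, which hinges on $\Lambda|_e=0$ so that the edge traces are genuinely cubic and the endpoint terms in the integration by parts drop out.
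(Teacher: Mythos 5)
Your proposal is correct and follows essentially the same route as the paper: surjectivity of $\rot_h$ inherited from the Fortin--Soulie pair via $\uG{}^2_{h0}\subset\uG{}^{2+}_{h0}$, the trivial inclusion $\nabla_h B^{3+}_{h0}\subset\ker(\rot_h)$, and the identical dimension count ($\#(\mathcal{X}^i_h)+3\#(\mathcal{E}_h^i)$, $2\#(\mathcal{T}_h)+4\#(\mathcal{E}_h^i)$, $3\#(\mathcal{T}_h)-1$) closed by Euler's formula. The only difference is that you spell out the well-definedness of the two maps and the injectivity of $\nabla_h$, which the paper leaves implicit.
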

\begin{proof}
By the construction of $\uG{}^{2+}_{h0}$ and Lemma \ref{lem:pGL} which shows $\rot_h\uG{}^{2}_{h0}=\mathbb{P}^1_{h0}$, $\rot_h\uG{}^{2+}_{h0}=\mathbb{P}^1_{h0}$. Note $\dim(\nabla_hB_{h0}^{3+})=\dim(B_{h0}^{3+})=\#(\mathcal{X}^i_h)+3\#(\mathcal{E}_h^i)$, $\dim(\uG{}_{h0}^{2+})=2(\#(\mathcal{T}_h)+2\#(\mathcal{E}_h^i))$ and $\dim(\mathbb{P}_{h0}^1)=3\#(\mathcal{T}_h)-1$, and thus by Euler formula, $\dim(\nabla_hB_{h0}^{3+})+\dim(\mathbb{P}_{h0}^1)=\dim(\uG{}_{h0}^{2+})$, namely $\dim(\nabla_h B^{3+}_{h0})=\dim (\{\upsi{}_h\in\uG{}_{h0}^{2+}:\rot_h\upsi{}_h=0\})$. Further, evidently, $\nabla_hB_{h0}^{3+}\subset \{\upsi{}_h\in\uG{}_{h0}^{2+}:\rot_h\upsi{}_h=0\}$. Therefore, $\nabla_hB_{h0}^{3+}= \{\upsi{}_h\in\uG{}_{h0}^{2+}:\rot_h\upsi{}_h=0\}$, and the exact complex follows. 
\end{proof}

\subsubsection{On the structure of $B^3_{h0}$: vertical perspectives}

\begin{theorem}\label{thm:exactb3}
A discretized Stokes complex holds as below:
\begin{equation}
\begin{array}{ccccccccc}
0 & \longrightarrow & B_{h0}^{3} & \xrightarrow{\bs{\mrm{\nabla}}_h} & \uG{}_{h0}^{2} & \xrightarrow{\mrm{rot}_h} & \mathbb{P}_{h0}^1  & \longrightarrow & 0.
\end{array}
\end{equation}
\end{theorem}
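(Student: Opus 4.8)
The plan is to obtain the complex by \emph{reducing} the auxiliary complex of Lemma \ref{lem:dsc3+}, rather than by computing $\dim B^3_{h0}$ directly, which is precisely the quantity made awkward by the overdetermined, globally-posed definition of the space. The starting observations are the two nestings $B^3_{h0}\subset B^{3+}_{h0}$ (by Remark \ref{rem:strB3dg}, $B^3_{h0}$ is exactly the set of $w_h\in B^{3+}_{h0}$ with $w_h|_T\in P_3(T)$ for every $T$) and $\uG{}^2_{h0}\subset\uG{}^{2+}_{h0}$ (the shape spaces satisfy $\uP{}_2(T)\subset\uP{}_2^+(T)$ while the weak-continuity conditions, the moments against $P_1(e)$, are identical). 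These let me transport the facts proved for the ``$+$'' complex to the reduced one.

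Three facts give exactness. First, $\nabla_h$ is injective on $B^3_{h0}$ because it is already injective on the larger space $B^{3+}_{h0}$ by Lemma \ref{lem:dsc3+}. Second, $\rot_h\uG{}^2_{h0}=\mathbb{P}^1_{h0}$: this is the surjectivity built into the Fortin--Soulie inf-sup bound, i.e. Lemma \ref{lem:pGL} with $k=2$ in its $\rot$ form, the same surjectivity already invoked in the proof of Lemma \ref{lem:dsc3+}. The remaining, and central, point is exactness in the middle, namely $\{\upsi{}_h\in\uG{}^2_{h0}:\rot_h\upsi{}_h=0\}=\nabla_h B^3_{h0}$. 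For the inclusion $\nabla_h B^3_{h0}\subset\uG{}^2_{h0}$ I argue by nesting: if $w_h\in B^3_{h0}\subset B^{3+}_{h0}$ then $\nabla_h w_h\in\uG{}^{2+}_{h0}$ by Lemma \ref{lem:dsc3+}, so its moment conditions against $P_1(e)$ already hold; since in addition $\nabla_h w_h|_T\in\nabla P_3(T)\subset\uP{}_2(T)$, the field lands in $\uG{}^2_{h0}$, and $\rot_h\nabla_h w_h=0$ cellwise. For the reverse inclusion, take $\upsi{}_h\in\uG{}^2_{h0}$ with $\rot_h\upsi{}_h=0$; viewing $\upsi{}_h\in\uG{}^{2+}_{h0}$ and applying Lemma \ref{lem:dsc3+} yields $\upsi{}_h=\nabla_h w_h$ for some $w_h\in B^{3+}_{h0}$, and it then remains to show $w_h|_T\in P_3(T)$ on every cell.

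This last cellwise step is where I expect the real content, though it reduces to a short linear-algebra check. Writing $w_h|_T=p+c_1\lambda_1\Lambda+c_2\lambda_2\Lambda$ with $p\in P_3(T)$ (using $P^{3+}_T=P_3(T)\oplus{\rm span}\{\lambda_1\Lambda,\lambda_2\Lambda\}$), one has $\upsi{}_h|_T=\nabla p+c_1\nabla(\lambda_1\Lambda)+c_2\nabla(\lambda_2\Lambda)$ with $\nabla p\in\uP{}_2(T)$. The element {\bf FE${}_{\rm veq}$} was arranged so that $\nabla(\lambda_1\Lambda)$ and $\nabla(\lambda_2\Lambda)$ are precisely the two enrichment directions of $\uP{}_2^+(T)$ and are linearly independent modulo $\uP{}_2(T)$ (this is exactly what makes $\dim\uP{}_2^+(T)=14$). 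Hence $\upsi{}_h|_T\in\uP{}_2(T)$ forces $c_1=c_2=0$, i.e. $w_h|_T\in P_3(T)$, so $w_h\in B^3_{h0}$.

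Combining the injectivity at $B^3_{h0}$, the surjectivity onto $\mathbb{P}^1_{h0}$, and the middle identity $\nabla_h B^3_{h0}=\ker(\rot_h)\cap\uG{}^2_{h0}$ yields the exact complex. The only delicate compatibility to check is that the gradient of the cubic enrichment $\Lambda P_1(T)$ matches the vector enrichment ${\rm span}\{\nabla(\lambda_i\Lambda)\}$ of $\uG{}^{2+}_{h0}$; this is built into the way {\bf FE${}_{\rm ec}$} and {\bf FE${}_{\rm veq}$} were defined, so the whole argument is a clean transfer through Lemma \ref{lem:dsc3+} together with one cellwise independence observation, with no direct dimension count of $B^3_{h0}$ required.
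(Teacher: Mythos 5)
Your proposal is correct and follows essentially the same route as the paper: both reduce the statement to the auxiliary complex of Lemma \ref{lem:dsc3+} together with the surjectivity $\rot_h\uG{}^2_{h0}=\mathbb{P}^1_{h0}$ from Lemma \ref{lem:pGL}, and both hinge on showing that a potential $w_h\in B^{3+}_{h0}$ of a rot-free field in $\uG{}^2_{h0}$ is necessarily piecewise cubic. The only difference is cosmetic: the paper disposes of that last step in one line (a polynomial with gradient in $\uP{}_2(T)$ lies in $P_3(T)$), whereas you make the equivalent check that the enrichment gradients $\nabla(\lambda_1\Lambda),\nabla(\lambda_2\Lambda)$ are independent modulo $\uP{}_2(T)$.
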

\begin{proof}
By Lemma \ref{lem:pGL}, it suffices to prove $\nabla_hB_{h0}^3=\uG{}^2_{h0}(\rot_h,0):=\{\uv{}_h\in\uG{}^2_{h0}:\rot_h\uv{}_h=0\}$. It is evident that $\nabla_hB_{h0}^3\subset\uG{}^2_{h0}(\rot_h,0)$. On the other hand, given $\uv{}_h\in \uG{}^2_{h0}\subset \uG{}^{2+}_{h0}$ such that $\rot_h\uv{}_h=0$, by Lemma \ref{lem:dsc3+}, there exists $w_h\in B^{3+}_{h0}$, such that $\nabla_hw_h=\uv{}_h$. Since $\nabla(w_h|_T)=\uv{}_h|_T\in \uP^2(T)$, $w_h|_T\in P_3(T)$, and thus $w_h\in B^3_{h0}$. Namely $\nabla_hB_{h0}^3\supset \uG{}^2_{h0}(\rot_h,0)$. The proof is completed. 
\end{proof}

\begin{lemma}\label{eq:imp3}
$B^3_{h0}=\{w_h\in A^3_{h0}:\nabla w_h\in \uG{}^2_{h0}\}$. 
\end{lemma}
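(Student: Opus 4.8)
The plan is to prove the two inclusions by decomposing the gradient on each edge into its normal and tangential parts. First note that membership in $A^3_{h0}$ is the easy half of the forward inclusion: every defining condition of $A^3_{h0}$ (piecewise cubic, continuity at all vertices of $\mathcal{X}_h$, vanishing zeroth moment of $\llt w_h\rrt$ on $\mathcal{E}_h^i$, vanishing at boundary vertices, and zero mean on $\mathcal{E}_h^b$) appears verbatim among the defining conditions of $B^3_{h0}$. Consequently the whole statement collapses to a single equivalence: for a fixed $w_h\in A^3_{h0}$, the property $\nabla w_h\in\uG{}^2_{h0}$ is equivalent to the \emph{extra} conditions carried by $B^3_{h0}$, namely $\int_e p_e\llt\partial_{\bf n}w_h\rrt=0$ for all $p_e\in P_1(e)$ on interior edges together with $\int_e p_e\partial_{\bf n}w_h=0$ on boundary edges.

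The key step is to split the $\uG{}^2_{h0}$ moment conditions into a normal and a tangential part. Since $w_h|_T\in P_3(T)$ gives $\nabla w_h|_T\in\uP{}_2(T)$, the field $\nabla w_h$ is a legitimate candidate for $\uG{}^2_{h0}$. Because $\mathbf{n}_e$ and $\mathbf{t}_e$ are constant along the straight edge $e$ and form an orthonormal frame, the requirement $\int_e p_e\llt(\nabla w_h)^j\rrt=0$ for $j=1,2$ is equivalent to the pair $\int_e p_e\llt\partial_{\bf n}w_h\rrt=0$ and $\int_e p_e\llt\partial_{\bf t}w_h\rrt=0$ (and the analogous statement on $\mathcal{E}_h^b$ with the jump replaced by the trace). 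The normal half is precisely the extra $B^3_{h0}$ condition, so it remains only to show that the tangential half is automatically satisfied by any $w_h\in A^3_{h0}$. Using the commutation $\llt\partial_{\bf t}w_h\rrt=\partial_{\bf t}\llt w_h\rrt$ and integrating by parts along $e$ for $p_e\in P_1(e)$,
$$
\int_e p_e\llt\partial_{\bf t}w_h\rrt\,\mathrm{ds}=\bigl[p_e\llt w_h\rrt\bigr]_{\partial e}-\int_e(\partial_{\bf t}p_e)\llt w_h\rrt\,\mathrm{ds},
$$
the endpoint term vanishes because $w_h$ is continuous at vertices, so $\llt w_h\rrt=0$ there, and the remaining integral vanishes because $\partial_{\bf t}p_e$ is constant and $\int_e\llt w_h\rrt=0$ by the zero-mean condition of $A^3_{h0}$. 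On boundary edges the same computation with $w_h$ in place of $\llt w_h\rrt$ uses $w_h(a)=0$ at boundary vertices and $\int_e w_h=0$. This establishes the equivalence in both directions and hence the claimed identity.

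The main obstacle here is not any analytic estimate but the precise bookkeeping that matches the normal-derivative moment conditions of $B^3_{h0}$ against the Cartesian moment conditions of $\uG{}^2_{h0}$; the decisive mechanism is the commutation $\llt\partial_{\bf t}w_h\rrt=\partial_{\bf t}\llt w_h\rrt$ together with the edge integration by parts, which shows that the tangential half of the $\uG{}^2_{h0}$ constraints is redundant once the vertex continuity and zeroth-moment conditions already built into $A^3_{h0}$ are in force.
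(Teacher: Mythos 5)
Your proof is correct, and it takes a genuinely different route from the paper's. The paper obtains the identity as a corollary of the exactness of the discretized Stokes complex: by Theorem \ref{thm:exactb3}, $\nabla_h B^3_{h0}=\{\uv{}_h\in\uG{}^2_{h0}:\rot_h\uv{}_h=0\}$, so any $w_h\in A^3_{h0}$ with $\nabla_h w_h\in\uG{}^2_{h0}$ (which is automatically piecewise curl-free) shares its broken gradient with some $w_h'\in B^3_{h0}$, and the injectivity of $\nabla_h$ on $A^3_{h0}$ forces $w_h=w_h'$. You instead verify the set equality directly: starting from the observation (already recorded in Remark \ref{rem:strB3dg}) that $B^3_{h0}$ is $A^3_{h0}$ cut down by the normal-derivative moment conditions, you resolve the $\uG{}^2_{h0}$ constraints on $\nabla_h w_h$ into the frame $(\mathbf{n}_e,\mathbf{t}_e)$ and show that the tangential half is automatic for every $w_h\in A^3_{h0}$ via the edge integration by parts $\int_e p_e\llt\partial_{\mathbf{t}}w_h\rrt=[p_e\llt w_h\rrt]_{\partial e}-\int_e(\partial_{\mathbf{t}}p_e)\llt w_h\rrt$, using vertex continuity and the zeroth-moment condition on the jump (and their boundary analogues). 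This is precisely the mechanism the paper uses elsewhere to derive \eqref{eq:momentc3+} in the proof of Lemma \ref{lem:res}, so it is consistent with the paper's toolbox; what your route buys is independence from the Stokes-complex machinery --- in particular from the dimension count in Lemma \ref{lem:dsc3+} and the Fortin--Soulie inf-sup stability --- making the lemma elementary and self-contained. What it does not give, of course, is the surjectivity statement $\nabla_h B^3_{h0}=\{\uv{}_h\in\uG{}^2_{h0}:\rot_h\uv{}_h=0\}$ itself, which the paper still needs for the implementation and approximation results; but for this particular lemma your direct argument is complete.
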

\begin{proof}
By the constructions of $B^3_{h0}$ and $A^3_{h0}$ and Theorem \ref{thm:exactb3}, $B^3_{h0}\subset\{w_h\in A^3_{h0}:\nabla w_h\in \uG{}^2_{h0}\}$. On the other hand, given $w_h\in A^3_{h0}$ such that $\nabla_hw_h\in \uG{}^2_{h0}$, there exists $w_h'\in B^3_{h0}$, such that $\nabla_hw_h'=\nabla_hw_h$. Since $\nabla_h$ is injective on $A^3_{h0}$, $w_h=w_h'$. The proof is completed. 
\end{proof}

\subsubsection{On the structure of $B^3_{h0}$: a horizontal perspective} Now we are going to show that $B^3_{h0}$ admits a set of basis functions with vertex-patch-based supports. Define 
\begin{equation}
\uS{}^2_{h0}(\rot,w0):=\{\uv\in\uS{}^2_{h0}:(\rot\uv,q_h)=0,\ \forall\,q_h\in \mathbb{P}^0_{h0}\}.
\end{equation} 
As the $\uS{}^2_{h0}-\mathbb{P}^0_{h0}$ pair is stable for Stokes problem, 
$$
\dim(\uS{}^2_{h0}(\rot,w0))=\dim(\uS{}^2_{h0})-\dim(\mathbb{P}^0_{h0})=\#(\mathcal(N)_{h0}^i)+3\#(\mathcal{X}_{h0}^i)=\dim(\uG{}^2_{h0}(\rot,0)).
$$

For $a\in\mathcal{X}_h$, denote by $P_a$ the union of triangles of which $a$ is a vertex, namely the patch associated with $a$; for $e\in\mathcal{E}_h$,  denote by $P_e$ the patch associated with $e$. Define, with respect to $a\in\mathcal{X}_h$ and $e\in\mathcal{E}_h$,
\begin{description}
\item[$\uphi{}_a^x$] $\uphi{}_a^x\in\uS{}^2_h$, $\uphi{}_a^x(a)=(1,0)^\top$; $\uphi{}_a^x(a')=\undertilde{0}$ on $a\neq a'\in\mathcal{X}_h$; $\fint_{e'}\uphi{}_a^x=\undertilde{0}$ on $e'\in\mathcal{E}_h$;
\item[$\uphi{}_a^y$] $\uphi{}_a^y\in\uS{}^2_h$, $\uphi{}_a^y(a)=(0,1)^\top$; $\uphi{}_a^y(a')=\undertilde{0}$ on $a\neq a'\in\mathcal{X}_h$; $\fint_{e'}\uphi{}^y_a=\undertilde{0}$ on $e'\in\mathcal{E}_h^i$;
\item[$\uphi{}_{P_a}$] $\uphi{}_{P_a}\in\uS{}^2_h$, $\uphi{}_a^x(a')=\undertilde{0}$ on $ a'\in\mathcal{X}_h$; $\fint_e\uphi=\undertilde{0}$ on $e\in\mathcal{E}_h$ and $a\not\in e$; $\fint_e\uphi\cdot\mathbf{t}_{e,P_a}=1$ and $\int_e\uphi\cdot\mathbf{n}_{e,P_a}=0$ on $e\subset P_a$ and $a\in e$, where $\mathbf{t}_{e,P_a}$ is the unit tangential vector along $e$ starting from $a$ and $\mathbf{n}_{e,P_a}$ is the normal direction of $e$ along the anticlockwise with respect to $P_a$;
\item[$\uphi{}_e$] $\uphi{}_e\in\uS{}^2_h$, $\fint_e\uphi{}_e\cdot\utau{}_e=0$, $\fint_e\uphi{}_e\cdot\mathbf{n}_e=1$, and $\uphi{}_e$ vanishes on $\Omega\setminus \mathring{P_e}$.
\end{description}

\begin{lemma}
The set $\{\uphi{}^x_a,\uphi{}^y_a,\uphi{}_{P_a},\uphi{}_e\}_{a\in\mathcal{X}^i_h,\ e\in\mathcal{E}^i_h}$ forms a basis of $\uS{}^2_{h0}(\rot,w0)$; namely
\begin{equation}\label{eq:basisweakrotfree}
\uS{}^2_{h0}(\rot,w0)={\rm span}\{\uphi{}_a^x\}_{a\in\mathcal{X}_h^i}\oplus {\rm span}\{\uphi{}_a^y\}_{a\in\mathcal{X}_h^i}\oplus{\rm span}\{\uphi{}_e\}_{e\in\mathcal{E}_h^i}\oplus{\rm span}\{\uphi{}_{P_a}\}_{a\in\mathcal{X}_h^i}.
\end{equation}
\end{lemma}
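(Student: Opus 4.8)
The plan is to reduce everything to a dimension comparison. The cardinality count displayed just before the lemma shows that the proposed set has exactly $3\#(\mathcal{X}_h^i)+\#(\mathcal{E}_h^i)$ elements, which is precisely $\dim(\uS{}^2_{h0}(\rot,w0))$ obtained from the stability of the $\uS{}^2_{h0}-\mathbb{P}^0_{h0}$ pair. Hence it suffices to prove two facts: every listed function belongs to $\uS{}^2_{h0}(\rot,w0)$, and the listed functions are linearly independent. Independence together with the matching count then upgrades to a spanning property, so that \eqref{eq:basisweakrotfree} holds and the set is a basis.

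For membership I would first note that each function is continuous and piecewise $P_2$ by construction, and lies in $\uH{}^1_0(\Omega)$ because it is supported in an interior patch ($P_a$ or $P_e$ with $a,e$ interior) and both its vertex values and its edge averages on any boundary edge vanish, so its boundary trace is zero. For the weak rot-free condition, the key observation is that for $\uphi\in\uS{}^2_{h0}$ the relation $(\rot\uphi,q_h)=0$ for all $q_h\in\mathbb{P}^0_{h0}$ is equivalent to $\int_T\rot\uphi=0$ on every $T$: the piecewise-constant $L^2$-projection of $\rot\uphi$ is forced to be a global constant, and $\int_\Omega\rot\uphi=0$ (since $\uphi\in\uH{}^1_0$) makes that constant vanish. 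I would then write $\int_T\rot\uphi=\oint_{\partial T}\uphi\cdot\mathbf t\,\mathrm ds$ as a signed sum of tangential edge integrals and check the three types: for $\uphi{}^x_a,\uphi{}^y_a$ every edge average is zero, so each term vanishes; for $\uphi{}_e$ the only surviving edge is $e$, where the tangential moment is prescribed to be zero; and for $\uphi{}_{P_a}$ only the two spoke edges of $T$ survive, and their tangential contributions enter with opposite orientations and cancel triangle by triangle.

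For linear independence I would peel off the coefficients of a vanishing combination $\sum\alpha_a\uphi{}^x_a+\sum\beta_a\uphi{}^y_a+\sum\gamma_a\uphi{}_{P_a}+\sum\delta_e\uphi{}_e=0$ using the defining moments. Evaluating at each interior vertex $a$ leaves only $\uphi{}^x_a,\uphi{}^y_a$ (value $(1,0)^\top$, $(0,1)^\top$), giving $\alpha_a=\beta_a=0$. Taking the normal average on each interior edge $e$ leaves only $\uphi{}_e$ (the $\uphi{}_{P_a}$ have zero prescribed normal moment on their spokes and vanish off their patches, and $\uphi{}_{e'}$ vanishes on $e\neq e'$), forcing $\delta_e=0$. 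Finally, testing with the tangential average on an interior edge $e=ab$, the functions $\uphi{}_{P_a}$ and $\uphi{}_{P_b}$ enter with opposite tangential orientations $\mathbf t_{e,P_a}=-\mathbf t_{e,P_b}$, whence $\gamma_a=\gamma_b$ for adjacent interior vertices, while an edge joining an interior vertex $a$ to a boundary vertex receives only the single spoke contribution of $\uphi{}_{P_a}$ and forces $\gamma_a=0$. Sweeping inward from $\mathcal{X}_h^{b,+1}$ through the levels $\mathcal{X}_h^{b,+k}$ of the notation, all $\gamma_a$ vanish.

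I expect the membership step, specifically the weak rot-free verification for $\uphi{}_{P_a}$, to be the main obstacle: one must confirm that the spoke contributions cancel \emph{triangle by triangle} rather than merely telescoping globally over the patch, since the local moment $\int_T\rot\uphi{}_{P_a}$ is what has to vanish, and this requires careful bookkeeping of the orientations $\mathbf t_{e,P_a}$, $\mathbf n_{e,P_a}$ together with the edge-length weights. The level induction for the $\gamma_a$ is the second delicate point, and it is the place where the level filtration of the interior vertices is genuinely exploited.
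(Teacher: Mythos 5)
Your proposal is correct and follows essentially the same route as the paper: membership in $\uS{}^2_{h0}(\rot,w0)$ plus linear independence plus the matching dimension count $3\#(\mathcal{X}_h^i)+\#(\mathcal{E}_h^i)$, with the independence of $\{\uphi{}_{P_a}\}$ extracted from tangential edge moments and a level-by-level sweep from $\mathcal{X}_h^b$ through the sets $\mathcal{X}_h^{b,+k}$. Your two flagged worries are exactly the points the paper leaves implicit: its proof writes $\fint_e\upsi\cdot\mathbf{n}_e$ where the tangential moment must be meant (all normal spoke moments vanish by construction), and the triangle-by-triangle cancellation of $\int_T\rot\uphi{}_{P_a}$ does require reading the spoke normalization as an integral $\int_e\uphi\cdot\mathbf{t}_{e,P_a}=1$ rather than an average, since with averages the two spokes of a triangle $T$ contribute $\pm(|e_1|-|e_2|)$.
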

\begin{proof}
By direct calculation, the functions $\uphi{}_a^x$, $\uphi{}_a^y$, $\uphi{}_{P_a}$ and $\uphi{}_e$ are all well-defined, and they all belong to $\uS{}^2_{h0}(\rot,w0)$. By their definitions, the functions $\{\uphi{}^x_a,\uphi{}^y_a,\uphi{}_e\}_{a\in\mathcal{X}_h^i,\ e\in\mathcal{E}_h^i}$ are linearly independent, and the summation ${\rm span}\{\uphi{}^x_a,\uphi{}^y_a,\uphi{}_e\}_{a\in\mathcal{X}_h^i,\ e\in\mathcal{E}_h^i}+{\rm span}\{\uphi{}_{P_a}\}_{a\in\mathcal{X}_h^i}$ is direct. It remains for us to show $\{\uphi{}_e\}_{e\in\mathcal{E}_h^i}$ are linearly independent. 

Assume there exist $\{\alpha_a\}_{a\in\mathcal{X}}\subset\mathbb{R}$ with $\alpha_a=0$ for $a\in\mathcal{X}_h^b$, such that  $\upsi=\sum_{a\in \mathcal{X}_{h}}\alpha_a\uphi{}_{P_a}\equiv 0$. By the definition of $\uphi{}_{P_a}$, for any $e\in\mathcal{E}_h^i$, $|\fint_{e}\upsi\cdot\mathbf{n}_e|=|\alpha_{a_e^L}-\alpha_{a_e^R}|$, where $a_e^L$ and $a_e^R$ are the two ends of $e$; thus $\alpha_{a_e^L}=\alpha_{a_e^R}$ for every $e\in\mathcal{E}_h^i$. Since $\alpha_a=0$ for $a\in \mathcal{X}_h^b$, $\alpha_a=0$ for $a\in\mathcal{X}_h^{b,+1}$; recursively, we obtain $\alpha_a=0$ for $a\in\mathcal{X}_h^{b,+j}$ level by level, and finally $\alpha_a=0$ for $a\in\mathcal{X}_h$. 

The proof is completed by noting the two sides of \eqref{eq:basisweakrotfree} have the same dimension. 
\end{proof}

\begin{theorem}
There exists a set of functions $\{w_{h}^i\}_{i=1:\dim(B^3_{h0})}\subset B^3_{h0}$, such that the functions are linearly independent and are each supported in a patch of some vertex. 
\end{theorem}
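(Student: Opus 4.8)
The plan is to transport a patch-supported basis through the isomorphism $\nabla_h\colon B^3_{h0}\to\uG{}^2_{h0}(\rot_h,0)$ furnished by Theorem \ref{thm:exactb3}. The crucial observation is that $\nabla_h^{-1}$ preserves patch supports: if $\uv{}_h\in\uG{}^2_{h0}(\rot_h,0)$ vanishes outside a vertex patch $P_a$, then $w_h:=\nabla_h^{-1}\uv{}_h\in B^3_{h0}$ is constant on each cell $T\not\subset P_a$, since $\nabla w_h|_T=\uv{}_h|_T=\undertilde{0}$; the vertex-continuity built into $A^3_{h0}\supset B^3_{h0}$, together with the connectedness of $\Omega\setminus P_a$ and the homogeneous boundary data, then forces $w_h\equiv 0$ there (the degenerate case $P_a=\Omega$ being trivial). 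Hence it suffices to exhibit a basis of $\uG{}^2_{h0}(\rot_h,0)$ whose members are each supported in a single vertex patch.

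To produce such a basis I would correct the explicit patch-supported basis $\{\uphi{}_a^x,\uphi{}_a^y,\uphi{}_{P_a},\uphi{}_e\}$ of $\uS{}^2_{h0}(\rot,w0)$ constructed above, which lives in the "wrong" space (only weak rot-freeness, and $\uS$ rather than $\uG$), by a cell-local bubble. First one checks that for any $\uv\in\uS{}^2_{h0}(\rot,w0)$ the linear polynomial $\rot\uv|_T$ has zero mean on each $T$: the condition $(\rot\uv,q_h)=0$ for all $q_h\in\mathbb{P}^0_{h0}$ forces $\fint_T\rot\uv$ to be independent of $T$, and summing this common value against the boundary condition $\uv\in\uH{}^1_0(\Omega)$ gives $\int_\Omega\rot\uv=0$, whence the value is $0$. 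Because $\rot$ maps the two-dimensional cell bubble space $\undertilde{\mathcal{B}}{}^2_{h0}|_T$ isomorphically onto the mean-zero linear polynomials on $T$, there is a unique $\undertilde{b}\in\undertilde{\mathcal{B}}{}^2_{h0}$ with $\rot_h\undertilde{b}=\rot_h\uv$ cell by cell; then $\uv-\undertilde{b}\in\uG{}^2_{h0}(\rot_h,0)$, and $\undertilde{b}$ is supported on exactly the cells on which $\uv$ is supported, so the patch support is preserved.

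The assignment $\uv\mapsto\uv-\undertilde{b}$ is linear from $\uS{}^2_{h0}(\rot,w0)$ into $\uG{}^2_{h0}(\rot_h,0)$; it is injective, since $\uv-\undertilde{b}=\undertilde{0}$ would put $\uv=\undertilde{b}\in\uS{}^2_{h0}\cap\undertilde{\mathcal{B}}{}^2_{h0}=\{\undertilde{0}\}$, and as the two spaces have equal dimension (recorded above) it is an isomorphism. Applying it to the basis yields a patch-supported basis $\{\uv_i-\undertilde{b}_i\}$ of $\uG{}^2_{h0}(\rot_h,0)$. Finally I would set $w_h^i:=\nabla_h^{-1}(\uv_i-\undertilde{b}_i)$; these are linearly independent in $B^3_{h0}$, number $\dim(B^3_{h0})$ because $\nabla_h$ is a bijection onto $\uG{}^2_{h0}(\rot_h,0)$, and by the support-preservation of the first paragraph each $w_h^i$ is supported in the vertex patch containing the support of $\uv_i-\undertilde{b}_i$ (an edge patch $P_e$ being contained in the patch of either endpoint of $e$), which gives the claim.

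The main obstacle is the mismatch between the space in which an explicit patch-supported basis is available, namely $\uS{}^2_{h0}(\rot,w0)$ with only weak rot-freeness, and the space onto which $\nabla_h$ actually surjects, namely $\uG{}^2_{h0}(\rot_h,0)$ with cell-wise rot-freeness; the bubble correction is precisely what bridges them while keeping supports local, and its validity rests entirely on the zero-cell-mean property of $\rot\uv$, which is a consequence of the global weak condition plus the boundary condition rather than of any purely local structure. A secondary delicate point is the support-preservation under $\nabla_h^{-1}$: deducing that a discrete gradient vanishing outside a patch forces the potential to vanish there relies on vertex-continuity and on the connectedness of the mesh exterior, which should be stated carefully.
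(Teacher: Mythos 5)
Your proposal is correct and follows essentially the same route as the paper: the paper defines the bubble-correction map $\mathcal{F}_h:\uS{}^2_{h0}(\rot,w0)\to\uG{}^2_{h0}(\rot_h,0)$ (your $\uv\mapsto\uv-\undertilde{b}$, up to sign) and composes it with $(\nabla^{-1})_h$ furnished by the exact sequence, both maps preserving supports. You merely supply details the paper asserts without proof, namely the zero cell-mean of $\rot\uv$, the per-cell isomorphism of $\rot$ from the Gauss--Legendre bubble space onto mean-zero linears, and the support preservation of $(\nabla^{-1})_h$.
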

\begin{proof}
Firstly, define an operator $\mathcal{F}_h:\uS{}^2_{h0}(\rot,w0)\to \uG{}^2_{h0}(\rot_h,0)$ by
$$
\mathcal{F}_h\uphi{}_h=\uphi{}_h+\undertilde{\phi}{}_h,\ \undertilde{\phi}{}_h\in\undertilde{\mathcal{B}}{}^2_{h0},\ \ \ \mbox{such\ that}\ \ \rot_h (\mathcal{F}_h\uphi{}_h)=0.
$$
The summand $\undertilde{\phi}{}_h$ can be determined cell by cell, and ${\rm supp} (\mathcal{F}_h\uphi{}_h)\subset {\rm supp}(\uphi{}_h)$. Secondly, define $(\nabla^{-1})_h:\uG{}^2_{h0}(\rot_h,0)\to B^3_{h0}$ such that $\nabla_h(\nabla^{-1})_h\upsi{}_h=\upsi{}_h$. By the exact relation between the spaces, the operator is well-defined, and ${\rm supp}((\nabla^{-1})_h\upsi{}_h)\subset {\rm supp}(\upsi{}_h)$. It thus follows that $(\nabla^{-1})_h\circ\mathcal{F}_h:\uS{}^2_{h0}(\rot, w0)\to B^3_{h0}$ is bijective and preserves support. Now, set 
$$
\left\{w_h^i\right\}_{i=1}^{3\#(\mathcal{X}^i_h)+\#(\mathcal{E}^i_h)}:=\left[(\nabla^{-1})_h\circ\mathcal{F}_h\right]\left( \{\uphi{}_a^x\}_{a\in\mathcal{X}_h^i}+\{\uphi{}_a^y\}_{a\in\mathcal{X}_h^i}+\{\uphi{}_e\}_{e\in\mathcal{E}_h^i}+\{\uphi{}_{P_a}\}_{a\in\mathcal{X}_h^i}\right),
$$ 
and the set is what we need. The proof is completed. 
\end{proof}

\subsection{Convergence analysis}

\begin{lemma}\label{lem:approxb3}
There exists a constant $C$, such that for any $w\in H^2_0(\Omega)\cap H^4(\Omega)$, there exists $w_h\in B^3_{h0}$ and 
$$
\|\nabla_h^2(w-w_h)\|_{0,\Omega}\leqslant Ch^2|w|_{4,\Omega};
$$
moreover, if $\Omega$ is convex, then $w_h$ can be chosen such that 
$$
\|\nabla_h(w-w_h)\|_{0,\Omega}\leqslant Ch^3|w|_{4,\Omega}.
$$
\end{lemma}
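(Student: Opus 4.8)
The plan is to establish the approximation lemma for $B^3_{h0}$ by exploiting the discretized Stokes complex of Theorem \ref{thm:exactb3}, rather than by constructing an interpolation operator directly on $B^3_{h0}$ (which would be hard since the space is overdetermined versus local shape functions and has no Ciarlet-triple interpolant). The key idea is to transfer the approximation problem from the $H^2$-function $w$ to its gradient $\nabla w$, which lives in the velocity space of a rotated Stokes pair, where standard approximation estimates are available.

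First I would set $\uv := \nabla w \in \uH^1_0(\Omega)$, and note that since $w\in H^2_0(\Omega)$ we have $\rot \uv = \rot\nabla w = 0$ and $\dv$-type control on $\uv$; in particular $\uv$ lies in a rotated analogue of the divergence-free subspace of $\uH^1_0$. The target is to produce $\uv_h \in \uG^2_{h0}(\rot_h,0) := \{\utau_h\in\uG^2_{h0}:\rot_h\utau_h=0\}$ approximating $\uv$ optimally in the broken $H^1$-seminorm, with $\|\nabla_h(\uv-\uv_h)\|_{0,\Omega}\lesssim h^2|\uv|_{3,\Omega}\lesssim h^2|w|_{4,\Omega}$. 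The standard approach here is a Fortin-type argument: start from the nodal/edge-moment interpolant $\Pi_h\uv\in\uG^2_{h0}$ afforded by the element \textbf{FE${}_{\rm veq}$} (or its reduction), which by Bramble--Hilbert gives the optimal $O(h^2)$ bound, but which need not be exactly weakly rot-free. One then corrects $\Pi_h\uv$ using the inf--sup stability of the rotated $\uG^2_{h0}-\mathbb{P}^1_{h0}$ pair (Lemma \ref{lem:pGL} together with the rot-version remark) to subtract off a correction that annihilates $\rot_h$, with the correction controlled in $H^1$-seminorm by $\|\rot_h\Pi_h\uv\|$, which is itself $O(h^2)$ because $\rot\uv=0$. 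This yields $\uv_h\in\uG^2_{h0}(\rot_h,0)$ with the optimal estimate.

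Having obtained such a weakly rot-free $\uv_h$, I invoke Theorem \ref{thm:exactb3}: since $\uv_h\in\uG^2_{h0}(\rot_h,0)=\nabla_h B^3_{h0}$, there exists $w_h\in B^3_{h0}$ with $\nabla_h w_h=\uv_h$. Then $\|\nabla_h^2(w-w_h)\|_{0,\Omega}=\|\nabla_h(\uv-\uv_h)\|_{0,\Omega}\lesssim h^2|w|_{4,\Omega}$, which is the first claimed estimate. Here I must be a little careful about normalization: $w_h$ is determined by $\uv_h$ only up to the kernel of $\nabla_h$ on $B^3_{h0}$, but since the complex is exact at $B^3_{h0}$ (it begins with $0\to B^3_{h0}$), $\nabla_h$ is injective there, so $w_h$ is unique and the boundary condition $w_h\in B^3_{h0}$ is automatic.

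For the convex case and the improved $O(h^3)$ $H^1$-estimate, the plan is a duality (Aubin--Nitsche) argument exploiting $H^4$-elliptic regularity on convex domains for the biharmonic/Stokes problem: I would estimate $\|\nabla_h(w-w_h)\|_{0,\Omega}$ by testing against the solution of an auxiliary problem and using the already-established $O(h^2)$ energy bound together with one extra power of $h$ from the regularity of the dual solution and the consistency properties of the nonconforming space. The main obstacle, and the step I expect to require the most care, is precisely this Fortin correction combined with the consistency bookkeeping: I must verify that the $\rot_h$-annihilating correction truly preserves membership in $\uG^2_{h0}$ (homogeneous edge and boundary moments) and does not degrade the approximation order, and in the duality argument I must control the nonconformity error terms arising across interior edges — the weak continuity built into $B^3_{h0}$ (vanishing zeroth-order jumps and first-order normal-derivative moments) is exactly what is needed to make these consistency terms superconvergent, but tracking them cleanly is the delicate part.
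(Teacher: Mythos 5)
Your proposal follows essentially the same route as the paper: set $\uphi=\nabla w$, use the inf--sup stability of the rotated $\uG{}^2_{h0}$--$\mathbb{P}^1_{h0}$ pair (Lemma \ref{lem:pGL}) to approximate $\uphi$ optimally by a discretely rot-free $\uphi_h\in\uG{}^2_{h0}$ (with the $O(h^3)$ $L^2$-bound on convex domains coming from Stokes duality), and then lift $\uphi_h$ to $w_h\in B^3_{h0}$ via the exactness of the discrete complex in Theorem \ref{thm:exactb3}. Your extra detail on the Fortin-type correction and the injectivity of $\nabla_h$ on $B^3_{h0}$ just fills in steps the paper leaves implicit.
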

\begin{proof}
Given $w\in H^2_0(\Omega)\cap H^4(\Omega)$, $\uphi:=\nabla w\in\uH{}^1_0(\Omega)\cap \uH^3(\Omega)$, and by the aid of the rotated incompressible Stokes problem, regarding  Lemma \ref{lem:pGL}, there exists $\uphi{}_h\in \uG{}^2_{h0}$, such that $\displaystyle\|\nabla_h(\uphi-\uphi{}_h)\|_{0,\Omega}\leqslant Ch^2|\uphi|_{3,\Omega}$, and, when $\Omega$ is convex, $\displaystyle\|\uphi-\uphi{}_h\|_{0,\Omega}\leqslant Ch^3|\uphi|_{4,\Omega}$. By Theorem \ref{thm:exactb3}, there exists $w_h\in B^3_{h0}$, such that $\nabla_hw_h=\uphi{}_h$ and completes the proof. 
\end{proof}

Define for $\varphi,\psi$ that make it reasonable the bilinear form
\begin{equation}\label{eq:res1def}
\mathcal{R}_h^1(\varphi,\psi):=(\nabla^2\varphi,\nabla_h^2\psi)+(\nabla\Delta \varphi,\nabla_h\psi),
\end{equation}
\begin{equation}\label{eq:res2def}
\mathcal{R}_h^2(\varphi,\psi):=(\nabla\Delta \varphi,\nabla_h\psi)+(\Delta^2\varphi,\psi),
\end{equation}
and 
\begin{equation}\label{eq:resdef}
\mathcal{R}_h(\varphi,\psi):=\mathcal{R}_h^1(\varphi,\psi)-\mathcal{R}_h^2(\varphi,\psi).
\end{equation}
\begin{lemma}\label{lem:res}
There is a constant $C$, such that 
\begin{equation}\label{eq:res1}
\displaystyle\mathcal{R}_h^1(\varphi,w_h)\leqslant Ch^k|\varphi|_{2+k,\Omega}\|\nabla_h^2w_h\|_{0,\Omega},\ \ \forall\,\varphi\in H^2_0(\Omega)\cap H^k(\Omega),\ w_h\in B^{3+}_{h0}+H^2_0(\Omega),\ k=1,2.
\end{equation}
\begin{equation}\label{eq:res2}
\displaystyle\mathcal{R}_h^i(\varphi,w_h)\leqslant Ch^2|\varphi|_{4,\Omega}\|\nabla_h^2w_h\|_{0,\Omega},\ \ \forall\,\varphi\in H^4_0(\Omega),\ w_h\in B^{3+}_{h0}+H^2_0(\Omega).
\end{equation}
\end{lemma}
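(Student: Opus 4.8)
The plan is to reduce each residual form to a sum of edge integrals by elementwise integration by parts, and then to play the weak–continuity conditions defining $B^{3+}_{h0}$ against the smoothness of $\varphi$. First I would integrate $(\nabla^2\varphi,\nabla_h^2 w_h)$ by parts cell by cell. Since $\dv\nabla^2\varphi=\nabla\Delta\varphi$, the resulting interior volume term exactly cancels the second summand of $\mathcal R^1_h$, leaving only the boundary contributions; collecting these across each shared edge gives
\[
\mathcal R^1_h(\varphi,w_h)=\sum_{e\in\mathcal E_h}\int_e(\nabla^2\varphi\,\mathbf n_e)\cdot\llt\nabla w_h\rrt_e .
\]
An analogous integration by parts of $(\nabla\Delta\varphi,\nabla_h w_h)$ yields $\mathcal R^2_h(\varphi,w_h)=\sum_{e\in\mathcal E_h}\int_e\partial_{\mathbf n}\Delta\varphi\,\llt w_h\rrt_e$. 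A conforming summand of $w_h\in B^{3+}_{h0}+H^2_0(\Omega)$ contributes nothing, since its gradient and trace have no interior jumps and vanish on $\partial\Omega$; similarly the boundary edges are governed by the homogeneous conditions defining $B^{3+}_{h0}$, so throughout it suffices to control the jumps carried by the $B^{3+}_{h0}$–part.

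Next I would split $\nabla^2\varphi\,\mathbf n_e$ and $\llt\nabla w_h\rrt$ into normal and tangential parts, so that the edge integrand becomes $(\mathbf n_e^\top\nabla^2\varphi\,\mathbf n_e)\llt\partial_{\mathbf n}w_h\rrt+(\mathbf t_e^\top\nabla^2\varphi\,\mathbf n_e)\llt\partial_{\mathbf t}w_h\rrt$. The decisive observation is that \emph{both} jumps are $L^2(e)$–orthogonal to $P_1(e)$: for the normal derivative this is built into the definition of $B^{3+}_{h0}$, while for the tangential derivative it follows from vertex continuity together with $\int_e\llt w_h\rrt=0$, because $\int_e\llt\partial_{\mathbf t}w_h\rrt\,p=\big[\llt w_h\rrt\,p\big]_{\partial e}-\int_e\llt w_h\rrt\,\partial_{\mathbf t}p$ vanishes for every $p\in P_1(e)$ (the endpoint term by continuity, the remaining term since $\partial_{\mathbf t}p$ is constant). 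Hence, writing $Q_e$ for the $L^2(e)$–projection onto $P_1(e)$, I may subtract $Q_e$ from each $\varphi$–coefficient at no cost, and a Cauchy–Schwarz step reduces $\eqref{eq:res1}$ to the two local factors $\|g-Q_e g\|_{0,e}$, with $g$ a second-order derivative of $\varphi$, and $\|\llt\nabla w_h\rrt\|_{0,e}$. The coefficient factor is dispatched by the Bramble–Hilbert trace estimate, giving $h^{k-1/2}|\varphi|_{2+k,\omega_e}$ for $k=1,2$; the same device with the constant projection and $\int_e\llt w_h\rrt=0$ will deliver $\eqref{eq:res2}$ for $\mathcal R^2_h$, once $\|\llt w_h\rrt\|_{0,e}$ is controlled.

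The heart of the matter — and the step I expect to be the main obstacle — is the pair of jump estimates $\|\llt\nabla w_h\rrt\|_{0,e}\lesssim h^{1/2}\|\nabla_h^2 w_h\|_{0,\omega_e}$ and $\|\llt w_h\rrt\|_{0,e}\lesssim h^{3/2}\|\nabla_h^2 w_h\|_{0,\omega_e}$, where $\omega_e$ denotes the patch of the one or two cells meeting $e$. Since both $\llt\nabla w_h\rrt$ and $\llt w_h\rrt$ have vanishing mean on $e$ — the former because both of its components are $P_1$–orthogonal, the latter by definition — the Poincaré inequality on the segment $e$ trades a factor $h$ for a tangential derivative, $\|\llt\nabla w_h\rrt\|_{0,e}\lesssim h\|\llt\partial_{\mathbf t}\nabla w_h\rrt\|_{0,e}$. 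But $\partial_{\mathbf t}\nabla w_h=\nabla^2 w_h\,\mathbf t_e$ is a genuine column of the piecewise Hessian, and here the conforming part of $w_h$ drops out because its gradient is continuous along $e$; an inverse trace estimate for polynomials then gives $\|\llt\partial_{\mathbf t}\nabla w_h\rrt\|_{0,e}\lesssim h^{-1/2}\|\nabla_h^2 w_h\|_{0,\omega_e}$, which proves the first bound. Feeding this back through Poincaré once more — now on $\llt w_h\rrt$, whose tangential derivative is precisely a component of $\llt\nabla w_h\rrt$ — yields the second.

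Finally I would assemble the pieces: bounding each edge integral by the product of its two local factors, summing over $e$ with Cauchy–Schwarz, and invoking the finite overlap of the patches $\{\omega_e\}$ guaranteed by shape-regularity. For $\eqref{eq:res1}$ this collapses $\sum_e h^{k-1/2}|\varphi|_{2+k,\omega_e}\,h^{1/2}\|\nabla_h^2 w_h\|_{0,\omega_e}$ into $Ch^{k}|\varphi|_{2+k,\Omega}\|\nabla_h^2 w_h\|_{0,\Omega}$ for $k=1,2$, and the case $k=2$ together with the $\mathcal R^2_h$ computation gives both instances of $\eqref{eq:res2}$. The only genuinely delicate points are the two jump estimates above and the verification that the tangential jump inherits $P_1$–orthogonality; everything else is a Bramble–Hilbert and scaling exercise.
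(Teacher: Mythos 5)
Your treatment of \eqref{eq:res1} is essentially the paper's: you first establish that $\llt\nabla w_h\rrt_e$ is $L^2(e)$-orthogonal to $P_1(e)$ (with exactly the same edgewise integration by parts for the tangential component that the paper uses to prove its identity \eqref{eq:momentc3+}), and then carry out the ``standard techniques'' that the paper invokes without writing them down --- elementwise integration by parts to edge terms, insertion of the $P_1(e)$-projection against the Hessian traces of $\varphi$, and the scaled trace/Poincar\'e jump bounds. That part is correct. Where you genuinely diverge is the bound for $\mathcal{R}_h^2$: you integrate by parts a second time to reduce it to $\sum_e\int_e\partial_{\mathbf n}\Delta\varphi\,\llt w_h\rrt_e$ and then control $\|\llt w_h\rrt\|_{0,e}\lesssim h^{3/2}\|\nabla_h^2w_h\|_{0,\omega_e}$ by two applications of the edge Poincar\'e inequality (using $\int_e\llt w_h\rrt=0$ and the $P_1$-orthogonality of $\llt\partial_{\mathbf t}w_h\rrt$), which together with the $h^{1/2}$ oscillation bound on $\partial_{\mathbf n}\Delta\varphi$ gives the required $O(h^2)$. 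The paper instead avoids edge traces of third derivatives of $\varphi$ altogether: it introduces the conforming nodal interpolant $\Pi_h^2$ onto $\mathcal{L}^2_{h0}$, applies Green's formula globally to $(\Delta^2\varphi,\Pi_h^2w_h)$, and splits $\mathcal{R}_h^2(\varphi,w_h)=(\nabla\Delta\varphi,\nabla_h(w_h-\Pi_h^2w_h))+(\Delta^2\varphi,w_h-\Pi_h^2w_h)$, estimating both terms by the elementwise moment orthogonality \eqref{eq:orthpi2} and approximation properties of $\Pi_h^2$. The two routes give the same rate; yours keeps everything on the skeleton and needs the extra jump estimate on $\llt w_h\rrt$, while the paper's interpolation device trades that for the (easy) construction and approximation properties of $\Pi_h^2$ and stays entirely with volume integrals. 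Both are sound; just make sure in your version that the $H^2_0(\Omega)$ summand of $w_h$ is disposed of by linearity (its contribution to each $\mathcal{R}_h^i$ vanishes identically), so that the inverse trace inequalities are only ever applied to the piecewise-polynomial part.
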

\begin{proof}
Given $e\in\mathcal{E}_h$, by the definition of $B_{h0}^{3+}$, $\fint_{e}p_e\llbracket\partial_{\mathbf{n}_e}w_h\rrbracket_e=0,\ p_e\in P_1(e)$; for the tangential direction, $\fint_{e}p_e\llbracket\partial_{\tau_e}w_h\rrbracket_e=(p_e(L_e)\llbracket w_h\rrbracket_e(L_e)-p_e(R_e)\llbracket w_h\rrbracket_e(R_e))-\fint_{e}\partial_{\tau_e}p_e\llbracket w_h\rrbracket_e=0$. Namely, 
\begin{equation}\label{eq:momentc3+}
\fint_{e}p_e\llbracket\nabla w_h\rrbracket_e=\undertilde{0},\ \forall\, p_e\in P_1(e),\ \ e\in\mathcal{E}_h.
\end{equation}
Therefore, \eqref{eq:res1} follows by standard techniques.

Now define $\Pi_h^2$ the nodal interpolation to $\mathcal{L}_{h0}^2$ by
$$
(\Pi_h^2w)(a)=w(a),\ \forall\,a\in\mathcal{X}_h^i;\quad \fint_e(\Pi_h^2w)=\fint_ew,\ \forall\,e\in\mathcal{E}_h^i.
$$
It is easy to verify the operator is well-defined. Moreover,
\begin{equation}\label{eq:orthpi2}
\fint_T\undertilde{c}\cdot\nabla (w-\Pi_h^2w)=0,\ \ \forall\,\undertilde{c}\in\mathbb{R}^2\ \mbox{and}\ T\in\mathcal{T}_h, \ \mbox{provided}\ w\in H^2_0(\Omega)+B^{3+}_{h0}.
\end{equation}
By Green's formula,
\begin{equation}
(\Delta^2u,\Pi_h^2 w_h)=-(\nabla\Delta u,\nabla \Pi_h^2 w_h),
\end{equation}
Therefore, 
$$
\displaystyle\mathcal{R}_h^2(\varphi,w_h)=(\nabla \Delta u,\nabla_h(w_h-\Pi_h^2w_h))+(\Delta^2u,w_h-\Pi_h^2w_h):=I_1+I_2.
$$
By \eqref{eq:orthpi2},
$$
I_1=\inf_{\undertilde{c}\in(\mathbb{P}_h^0)^2}\left(\left[\nabla\Delta u-\undertilde{c}\right],\nabla_h(\Pi_h^2w_h-w_h)\right)\leqslant Ch^2|u|_{4,\Omega}\|\nabla_h^2w_h\|_{0,\Omega}.
$$
Further, 
$$
I_2\leqslant Ch^2|u|_{4,\Omega}\|\nabla_h^2w_h\|_{0,\Omega}.
$$
Summing all above proves \eqref{eq:res2}. 
\end{proof}

\paragraph{\bf Proof of Theorem \ref{thm:ratep3}} 
The proof is mainly along the line of \cite{Shi.Z1990} with some technical modifications. By Strang lemma,
$$
\|\nabla_h^2(u-u_h)\|_{0,\Omega}\cequiv \inf_{v_h\in B^3_{h0}}\|\nabla_h^2(u-v_h)\|_{0,\Omega}+\sup_{v_h\in B^3_{h0}\setminus\{\mathbf{0}\}}\frac{(\nabla^2u,\nabla^2_hv_h)-(f,v_h)}{\|\nabla_h^2v_h\|_{0,\Omega}}.
$$
The approximation error estimate follows by Lemma \ref{lem:approxb3}. By Lemma \ref{lem:res}, 
$$
(\nabla^2u,\nabla^2_hv_h)-(f,v_h)=(\nabla^2u,\nabla^2_hv_h)-(\Delta^2u,v_h)=\mathcal{R}_h(u,v_h)\leqslant Ch^2|u|_{4,\Omega}\|\nabla_h^2v_h\|_{0,\Omega}.
$$
The proof of \eqref{eq:h2error3} is completed.

Now we turn to the proof of \eqref{eq:h1error3}. By Lemma \ref{lem:approxb3}, there exists $u^\Pi_h\in B^3_{h0}$, such that 
$\|\nabla_h^j(u-u^\Pi_h)\|_{0,\Omega}\leqslant Ch^{4-j}|u|_{4,\Omega}$, $j=1,2$, and  thus $\|\nabla_h^2(u_h^\Pi-u_h)\|_{0,\Omega}\leqslant Ch^2|u|_{4,\Omega}$. Denote by $\Pi_h^1$ the nodal interpolation onto $\mathcal{L}^1_{h0}$, then $\Pi_h^1(u^\Pi_h-u_h)\in H^1_0(\Omega)$. Set $\varphi\in H^3(\Omega)\cap H^2_0(\Omega)$ such that 
$$
(\nabla^2\varphi,\nabla^2 v)=(\nabla\Pi_h^1(u^\Pi_h-u_h),\nabla v),\quad\forall\,v\in H^2_0(\Omega),
$$
then when $\Omega$ is convex, $\|\varphi\|_{3,\Omega}\cequiv \|\Pi_h^1(u^\Pi_h-u_h)\|_{1,\Omega}$. By Green's formula, 
\begin{multline*}\label{eq:shi23'}
\|\nabla\Pi_h^1(u^\Pi_h-u_h)\|_{0,\Omega}^2=-(\nabla\Delta\varphi,\nabla \Pi_h^1(u_h^\Pi-u_h))=-(\nabla\Delta\varphi,\nabla\Pi_h^1(u_h^\Pi-u))-(\nabla\Delta \varphi,\nabla\Pi_h^1(u-u_h))
\\
=(\nabla\Delta\varphi\cdot\nabla ({\rm Id}-\Pi_h^1)(u^\Pi_h-u_h))-(\nabla\Delta \varphi\cdot\nabla(u^\Pi_h-u))-(\nabla\Delta \varphi\cdot\nabla(u-u_h)):=I_1+I_2+I_3.
\end{multline*}
Further, choose $\varphi_h^\Pi$ to be an approximation of $\varphi$ by Lemma \ref{lem:approxb3}, and
\begin{multline*}
I_3=(\nabla^2\varphi,\nabla_h^2(u-u_h))+\mathcal{R}_h^1(\varphi,u-u_h)=-(\nabla_h^2(\varphi-\varphi_h^\Pi),\nabla_h^2(u-u_h))-(\nabla_h^2\varphi_h^\Pi,\nabla_h^2(u-u_h))+\mathcal{R}_h^1(\varphi,u-u_h)
\\
=-(\nabla_h^2(\varphi-\varphi_h^\Pi),\nabla_h^2(u-u_h))-\mathcal{R}_h(u,\varphi-\varphi_h^\Pi)+\mathcal{R}_h^1(\varphi,u-u_h)
\end{multline*}
Therefore $\|\nabla\Pi_h^1(u^\Pi_h-u_h)\|_{0,\Omega}^2\leqslant Ch^3|\varphi|_{3,\Omega}|u|_{4,\Omega},$ and $\|\nabla\Pi_h^1(u_h^\Pi-u_h)\|_{0,\Omega}\leqslant Ch^3|u|_{4,\Omega}$. Finally
\begin{multline*}
\|\nabla_h(u-u_h)\|_{0,\Omega}\leqslant \|\nabla_h(u-u_h^\Pi)\|_{0,\Omega}+\|\nabla_h(u_h^\Pi-u_h)\|_{0,\Omega}
\\
\leqslant  
\|\nabla_h(u-u_h^\Pi)\|_{0,\Omega}+\|\nabla_h[(u_h^\Pi-u_h)-\Pi_h^1(u_h^\Pi-u_h)]\|_{0,\Omega}+\|\Pi_h^1(u_h^\Pi-u_h)]\|_{0,\Omega}\leqslant Ch^3\|\nabla^4u\|_{0,\Omega}.
\end{multline*}
The proof is completed.
\qed

\subsection{On the implementation and solver} The lemma below follows from Theorem \ref{thm:exactb3} and Lemma \ref{eq:imp3}. Namely, \eqref{eq:bhvpp3} can be decomposed to three subsystems to implement and to solve. 

\begin{lemma}
Let $u_h^*$ be obtained by following procedure: 
\begin{enumerate}
\item find $r_h\in A^3_{h0}$, such that 
$$
(\nabla_hr_h,\nabla_hs_h)=(f,s_h),\quad\forall\,s_h\in A_{h0}^3;
$$
\item find $(\uphi{}_h,p_h)\in \uG{}^2_{h0}\times\mathbb{P}^1_{h0}$, such that 
\begin{equation*}
\left\{
\begin{array}{rll}
(\nabla_h\uphi{}_h,\nabla_h\upsi{}_h)+(p_h,\rot_h\upsi{}_h)&=(\nabla_hr_h,\upsi{}_h)&\forall\,\upsi{}_h\in\uG{}^2_{h0}
\\
(q_h,\rot_h\uphi{}_h)&=0&\forall\,q_h\in \mathbb{P}^1_{h0};
\end{array}
\right.
\end{equation*}
\item find $u_h^*\in A^3_{h0}$, such that 
$$
(\nabla_hu_h^*,\nabla_hv_h^*)=(\uphi{}_h,\nabla_hv_h^*),\quad\forall v_h^*\in A^3_{h0}.
$$
\end{enumerate}
Let $u_h$ be the solution of \eqref{eq:bhvpp3}. Then $u_h^*=u_h$.
\end{lemma}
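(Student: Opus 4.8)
The plan is to run the three steps backward through the exact complex of Theorem~\ref{thm:exactb3}, showing that the velocity field $\uphi{}_h$ produced by Step~(2) is exactly $\nabla_h u_h$, and that Step~(3) merely inverts $\nabla_h$ on $A^3_{h0}$ to recover the potential. First I would rewrite the load: since $B^3_{h0}\subset A^3_{h0}$, every $v_h\in B^3_{h0}$ is admissible as a test function in Step~(1), so that $(f,v_h)=(\nabla_h r_h,\nabla_h v_h)$ for all $v_h\in B^3_{h0}$. This converts the biharmonic right-hand side into a quantity depending only on $\nabla_h v_h$.

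Next I would analyze Step~(2). For $\uphi{}_h\in\uG{}^2_{h0}$ the function $\rot_h\uphi{}_h$ is piecewise linear, and the moment continuity together with the boundary conditions of $\uG{}^2_{h0}$ give $\int_\Omega\rot_h\uphi{}_h=0$ (all tangential edge integrals match across interior edges and vanish on boundary edges); hence $\rot_h\uphi{}_h\in\mathbb{P}^1_{h0}$. Testing the second equation of Step~(2) with $q_h=\rot_h\uphi{}_h$ then forces $\rot_h\uphi{}_h=0$, i.e. $\uphi{}_h\in\uG{}^2_{h0}(\rot_h,0)$. By Theorem~\ref{thm:exactb3} this kernel is precisely $\nabla_h B^3_{h0}$, so $\uphi{}_h=\nabla_h\tilde u_h$ for a unique $\tilde u_h\in B^3_{h0}$. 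For this to be meaningful I also need Step~(2) to be uniquely solvable, which follows from the $\rot_h$-version of the inf-sup stability of the pair $\uG{}^2_{h0}-\mathbb{P}^1_{h0}$ recorded in Lemma~\ref{lem:pGL}.

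Then I would identify $\tilde u_h$ with the biharmonic solution. Restricting the first equation of Step~(2) to rot-free test functions $\upsi{}_h\in\uG{}^2_{h0}(\rot_h,0)$ annihilates the pressure term and leaves $(\nabla_h\uphi{}_h,\nabla_h\upsi{}_h)=(\nabla_h r_h,\upsi{}_h)$ for all such $\upsi{}_h$. Writing $\upsi{}_h=\nabla_h v_h$ with $v_h\in B^3_{h0}$ (again via Theorem~\ref{thm:exactb3}) and using the load rewrite from Step~(1), this becomes $(\nabla_h^2\tilde u_h,\nabla_h^2 v_h)=(f,v_h)$ for all $v_h\in B^3_{h0}$, which is exactly~\eqref{eq:bhvpp3}; by uniqueness of its solution, $\tilde u_h=u_h$, so $\uphi{}_h=\nabla_h u_h$.

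Finally, Step~(3) is a pure potential recovery: since $\uphi{}_h=\nabla_h u_h$ with $u_h\in B^3_{h0}\subset A^3_{h0}$, the right-hand side equals $(\nabla_h u_h,\nabla_h v_h^*)$, whence $(\nabla_h(u_h^*-u_h),\nabla_h v_h^*)=0$ for all $v_h^*\in A^3_{h0}$; taking $v_h^*=u_h^*-u_h$ and invoking the injectivity of $\nabla_h$ on $A^3_{h0}$ (the discrete Poincaré inequality) yields $u_h^*=u_h$. I expect the main obstacle to be the bookkeeping in Step~(2): verifying that the constraint genuinely forces $\rot_h\uphi{}_h=0$, which hinges on the integrability fact $\rot_h\uphi{}_h\in\mathbb{P}^1_{h0}$, and then checking that the reduced saddle-point equation, after the load rewrite, coincides term for term with the biharmonic weak form. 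The remaining well-posedness ingredients are already furnished by Lemma~\ref{lem:pGL} and the injectivity of $\nabla_h$ on $A^3_{h0}$.
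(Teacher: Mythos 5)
Your argument is correct and follows exactly the route the paper intends: the paper gives no written proof beyond asserting that the lemma ``follows from Theorem~\ref{thm:exactb3} and Lemma~\ref{eq:imp3}'', and your write-up is precisely the natural expansion of that assertion --- using the exactness $\nabla_h B^3_{h0}=\{\upsi{}_h\in\uG{}^2_{h0}:\rot_h\upsi{}_h=0\}$ to identify $\uphi{}_h=\nabla_h u_h$, the inclusion $B^3_{h0}\subset A^3_{h0}$ to rewrite the load, and the injectivity of $\nabla_h$ on $A^3_{h0}$ to conclude. The verification that $\rot_h\uG{}^2_{h0}\subset\mathbb{P}^1_{h0}$ (so that the constraint equation really forces $\rot_h\uphi{}_h=0$) is the right detail to have checked.
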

\begin{remark}
The space $B_{h0}^3$ admits a set of basis functions whose supports are located within single vertex-patches, and the finite element scheme can be implemented by writing these basis functions out. However, the way of implementation as Lemma \ref{eq:imp3} is recommended, as 
\begin{itemize} 
\item[---] it involves Poisson equations and Stokes problems only, which can be solved in effective and standard ways;
\item[---] only restriction on the evaluation, rather than the derivatives, are imposed on the finite element functions involved. 
\end{itemize}
\end{remark}

\begin{remark}
Evidently, $B^3_{h0}=\{w_h\in A^3_{h0}:\fint_ep_e\llt\partial_{\bf n}w_h\rrt=0,\ \forall\, p_e\in \tilde{P}_1(e), \forall\, e\in\mathcal{E}_h^i\}$, where $\tilde{P}_1(e)$ denotes the first-degree homogeneous polynomial on $e$ with vanishing integration. The restriction that $\partial_{\bf n}w_h$ is moment-continuous across interior edges can be imposed by a penalty formulation with the type of saddle point problem. 
\end{remark}

\section{An optimal piecewise quartic finite element method for biharmonic equation}
\label{sec:quartic}

In this section, we assume on the triangulation that
\begin{quote}
{\bf Assumption CF}:\ \ \ $\rot_h\uG{}^3_{h0}=\mathbb{P}^2_{h0}.$
\end{quote}
Regarding Remark \ref{rem:cfinfsup}, the assumption is quite mild.

Now define
\begin{multline*}
B_h^{4}:=\{v\in L^2(\Omega):v|_T\in P_4(T), v\ \mbox{is\ continuous\ at}\ a\in\mathcal{X}_h;
\\ 
\int_ep_e\llt v\rrt=0,\ \forall\, p_e\in P_1(e),\ \mbox{and}\  \int_eq_e\llt\partial_{\bf n}v\rrt=0,\ \forall\,q_e\in P_2(e), \ \forall\, e\in\mathcal{E}_h^i\},
\end{multline*}
and, corresponding to the boundary condition of $H^2_0(\Omega)$, define
$$
B_{h0}^{4}:=\{v\in B_h^{4}: v(a)=0,\ a\in\mathcal{X}_h^b;\ \int_ep_ev=0,\ p_e\in P_1(e),\ \mbox{and}\  \int_eq_e\partial_{\bf n}v=0,\  \forall\,q_e\in P_2(e),\ \forall\, e\in\mathcal{E}_h^b\}.
$$
We consider the discretization: find $u_h\in B^4_{h0}$, such that 
\begin{equation}\label{eq:bhvpp4}
(\nabla_h^2u_h,\nabla^2_hv_h)=(f,v_h),\quad\forall\,v_h\in B^4_{h0}.
\end{equation}
The main result of this section is the theorem below. Denote
\begin{equation}
\mathcal{C}_h^{\rm CF}:=\inf_{q_h\in \mathbb{P}_{h0}^2\setminus\{\mathbf{0}\}}\sup_{\uv{}_h\in\uG{}^3_{h0}\setminus\{\mathbf{0}\}}\frac{(\rot_h\uv{}_h,q_h)}{\|\nabla_h\uv{}_h\|_{0,\Omega}\|q_h\|_{0,\Omega}}.
\end{equation}
\begin{theorem}\label{thm:ratep4}
Let $u$ and $u_h$ be the solutions of \eqref{eq:bhvp} and \eqref{eq:bhvpp4}, respectively. If $u\in H^5(\Omega)$, then, with $C$ a generic constant depending on $\Omega$ and the regularity of the grid only,
\begin{equation}
\|\nabla_h^2(u-u_h)\|_{0,\Omega}\leqslant \frac{Ch^3}{\mathcal{C}_h^{\rm CF}}|u|_{5,\Omega}.
\end{equation}
\end{theorem}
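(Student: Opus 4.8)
The plan is to reproduce the architecture of the proof of Theorem \ref{thm:ratep3} verbatim, with the cubic triple $(B_{h0}^3,\uG{}^2_{h0},\mathbb{P}^1_{h0})$ replaced by the quartic triple $(B_{h0}^4,\uG{}^3_{h0},\mathbb{P}^2_{h0})$, and with the (now non-uniform) inf-sup constant $\mathcal{C}_h^{\rm CF}$ tracked explicitly. The starting point is Strang's lemma, which gives
\[
\|\nabla_h^2(u-u_h)\|_{0,\Omega}\cequiv \inf_{v_h\in B_{h0}^4}\|\nabla_h^2(u-v_h)\|_{0,\Omega}+\sup_{v_h\in B_{h0}^4\setminus\{\mathbf{0}\}}\frac{(\nabla^2u,\nabla_h^2v_h)-(f,v_h)}{\|\nabla_h^2v_h\|_{0,\Omega}}.
\]
It then suffices to bound the approximation term and the consistency term, each by a multiple of $h^3|u|_{5,\Omega}/\mathcal{C}_h^{\rm CF}$.

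For the approximation term I would first establish the quartic discretized Stokes complex, i.e. the exact sequence
\[
0\longrightarrow B_{h0}^4\xrightarrow{\bs{\mrm{\nabla}}_h}\uG{}^3_{h0}\xrightarrow{\mrm{rot}_h}\mathbb{P}^2_{h0}\longrightarrow 0,
\]
the analogue of Theorem \ref{thm:exactb3}: surjectivity of $\mrm{rot}_h$ on the right is exactly \textbf{Assumption CF}, and exactness in the middle follows by first proving an enriched complex for $(B_{h0}^{4+},\uG{}^{3+}_{h0},\mathbb{P}^2_{h0})$ by an Euler-formula dimension count as in Lemma \ref{lem:dsc3+}, and then reducing via the observation that $\nabla(w_h|_T)\in\uP{}_3(T)$ forces $w_h|_T\in P_4(T)$. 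Given $u\in H^5(\Omega)\cap H^2_0(\Omega)$, set $\uphi:=\nabla u$, which is rot-free; solving the rotated Stokes problem on the inf-sup stable pair $\uG{}^3_{h0}$–$\mathbb{P}^2_{h0}$ produces a discretely rot-free $\uphi{}_h\in\uG{}^3_{h0}$ satisfying the standard velocity estimate $\|\nabla_h(\uphi-\uphi{}_h)\|_{0,\Omega}\leqslant \frac{C}{\mathcal{C}_h^{\rm CF}}h^3|\uphi|_{4,\Omega}=\frac{C}{\mathcal{C}_h^{\rm CF}}h^3|u|_{5,\Omega}$, the factor $1+1/\mathcal{C}_h^{\rm CF}$ arising from inflating the best approximation error by the reciprocal of the inf-sup constant. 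Lifting $\uphi{}_h$ back through the complex yields $w_h\in B_{h0}^4$ with $\nabla_hw_h=\uphi{}_h$, whence $\|\nabla_h^2(u-w_h)\|_{0,\Omega}=\|\nabla_h(\uphi-\uphi{}_h)\|_{0,\Omega}\leqslant \frac{C}{\mathcal{C}_h^{\rm CF}}h^3|u|_{5,\Omega}$.

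For the consistency term I would bound the residual $\mathcal{R}_h(u,v_h)=(\nabla^2u,\nabla_h^2v_h)-(\Delta^2u,v_h)$ by an argument parallel to Lemma \ref{lem:res}. The new ingredient is that the continuity constraints defining $B_{h0}^4$ (vertex continuity together with value-jumps orthogonal to $P_1(e)$ and normal-derivative-jumps orthogonal to $P_2(e)$) upgrade the edge moment-cancellation of \eqref{eq:momentc3+} by one order, namely $\fint_ep_e\llbracket\nabla v_h\rrbracket_e=\undertilde{0}$ for all $p_e\in P_2(e)$ and $e\in\mathcal{E}_h$; the tangential component follows by integrating by parts along the edge, the boundary contributions vanishing by vertex continuity. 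Integrating $\mathcal{R}_h$ by parts elementwise and subtracting the $P_2(e)$ edge projection of the relevant trace of $\nabla^2u$ then absorbs one more Taylor order than in the cubic case, giving $Ch^3|u|_{5,\Omega}\|\nabla_h^2v_h\|_{0,\Omega}$ with $C$ independent of $\mathcal{C}_h^{\rm CF}$. For the volume contribution $(\Delta^2u,v_h)$ I would use a nodal interpolation $\Pi_h^3$ onto $\mathcal{L}^3_{h0}$ chosen so that $\fint_T\undertilde{c}\cdot\nabla(v_h-\Pi_h^3v_h)=0$ for all $\undertilde{c}\in\uP{}_1(T)$, the quartic analogue of \eqref{eq:orthpi2}, again yielding $Ch^3|u|_{5,\Omega}$.

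Finally, since $\mathcal{C}_h^{\rm CF}\leqslant C$ uniformly, the consistency bound $Ch^3|u|_{5,\Omega}$ is dominated by $\frac{Ch^3}{\mathcal{C}_h^{\rm CF}}|u|_{5,\Omega}$, so combining it with the approximation estimate closes the proof. The main obstacle is the approximation step: establishing the quartic discretized Stokes complex under \textbf{Assumption CF} (the dimension count and the enriched-to-reduced reduction), and correctly propagating the non-uniform constant $\mathcal{C}_h^{\rm CF}$ through the velocity error estimate so that it appears exactly once in the final bound. The consistency step, by contrast, is a routine one-order upgrade of Lemma \ref{lem:res} whose constant depends only on the shape-regularity of $\mathcal{T}_h$.
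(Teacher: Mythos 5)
Your proposal follows the paper's own route essentially verbatim: the paper explicitly states that the proof of Theorem \ref{thm:ratep4} repeats that of Theorem \ref{thm:ratep3}, replacing the cubic triple by $(B_{h0}^{4},\uG{}_{h0}^{3},\mathbb{P}_{h0}^{2})$, with the approximation term handled through the quartic discretized Stokes complex (exactness via the enriched complex, an Euler-formula dimension count, and Assumption CF supplying $\rot_h\uG{}^3_{h0}=\mathbb{P}^2_{h0}$, with $\mathcal{C}_h^{\rm CF}$ entering through the Stokes velocity estimate) and the consistency term handled by the upgraded edge moment condition \eqref{eq:momentc4+} and the interpolation $\Pi_h^3$ of Lemma \ref{lem:resquartic}. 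This matches the paper's argument in both structure and detail, so the proposal is correct.
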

The proof of Theorem \ref{thm:ratep4} is just the same as that of Theorem \ref{thm:ratep3}. Firstly we list some key lemmas and omit the detailed proofs.

\begin{lemma}
A discretized Stokes complex holds as below:
\begin{equation}
\begin{array}{ccccccccc}
0 & \longrightarrow & B_{h0}^{4+} & \xrightarrow{\bs{\mrm{\nabla}}_h} & \uG{}_{h0}^{3+} & \xrightarrow{\mrm{rot}_h} & \mathbb{P}_{h0}^2  & \longrightarrow & 0.
\end{array}
\end{equation}
\end{lemma}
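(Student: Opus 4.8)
The plan is to mimic exactly the proof of Lemma~\ref{lem:dsc3+}, since the target complex is the quartic analogue of the cubic one established there. The strategy relies on three ingredients: surjectivity of $\rot_h$ onto $\mathbb{P}_{h0}^2$, a dimension count via Euler's formula, and the elementary inclusion $\nabla_h B_{h0}^{4+}\subset\{\upsi{}_h\in\uG{}_{h0}^{3+}:\rot_h\upsi{}_h=0\}$.

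First I would establish the right end of the complex, namely $\rot_h\uG{}_{h0}^{3+}=\mathbb{P}_{h0}^2$. Under \textbf{Assumption CF} we already have $\rot_h\uG{}^3_{h0}=\mathbb{P}^2_{h0}$, and since $\uG{}^3_{h0}\subset\uG{}^{3+}_{h0}$, the surjectivity is immediate; this also shows $\bs{\mrm{\nabla}}_h$ is injective on $B_{h0}^{4+}$ because its kernel consists of piecewise-constant functions that are single-valued at vertices and hence globally constant, forcing zero by the boundary conditions. Next I would carry out the dimension count. From the unisolvence of {\bf FE${}_{\rm eq}$} one reads off $\dim(B_{h0}^{4+})$ in terms of $\#(\mathcal{X}_h^i)$ and $\#(\mathcal{E}_h^i)$ (three moments per interior edge for $v$ and its normal derivative, plus vertex values), and from {\bf FE${}_{\rm vec}$} one reads off $\dim(\uG{}_{h0}^{3+})$ in terms of $\#(\mathcal{T}_h)$ and $\#(\mathcal{E}_h^i)$; finally $\dim(\mathbb{P}_{h0}^2)=6\#(\mathcal{T}_h)-1$. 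The goal is to verify, using Euler's relation among $\#(\mathcal{X}_h)$, $\#(\mathcal{E}_h)$ and $\#(\mathcal{T}_h)$, the identity
$$
\dim(\nabla_h B_{h0}^{4+})+\dim(\mathbb{P}_{h0}^2)=\dim(\uG{}_{h0}^{3+}),
$$
which, combined with the surjectivity of $\rot_h$, forces $\dim(\nabla_h B_{h0}^{4+})=\dim(\{\upsi{}_h\in\uG{}_{h0}^{3+}:\rot_h\upsi{}_h=0\})$.

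With the injectivity and the two dimension facts in hand, the middle exactness follows formally: the inclusion $\nabla_h B_{h0}^{4+}\subset\{\upsi{}_h\in\uG{}_{h0}^{3+}:\rot_h\upsi{}_h=0\}$ is checked edge-by-edge, since if $v\in B_{h0}^{4+}$ then $\nabla v$ is single-valued in the appropriate moments across each interior edge (the tangential moments of $\nabla v$ are controlled by the moments of $\llt v\rrt$ via integration by parts along $e$, and the normal moments by the moments of $\llt\partial_{\bf n}v\rrt$, both of which vanish), and $\rot_h\nabla_h v=0$ elementwise. Equality of the two spaces then follows from equality of their dimensions, completing the complex.

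The main obstacle I anticipate is the dimension bookkeeping rather than any conceptual difficulty: one must correctly extract the local dimensions from the enriched shape-function spaces $P_T^{4+}$ and $\uP{}_T^{3+}$, account precisely for the boundary degrees of freedom removed in passing to the subscript-zero spaces, and then confirm the Euler-formula cancellation. A subsidiary technical point is verifying that the tangential jump moments of $\nabla v$ are genuinely killed by the defining conditions on $\llt v\rrt$; this is the quartic analogue of the computation $\fint_e p_e\llbracket\partial_{\tau_e}w_h\rrbracket_e=0$ appearing in the proof of Lemma~\ref{lem:res}, now requiring $p_e$ to range over $P_2(e)$ consistent with the continuity of $\llt v\rrt$ against $P_1(e)$ and continuity of $\llt\partial_{\bf n}v\rrt$ against $P_2(e)$. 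Given the parallel with Lemma~\ref{lem:dsc3+}, I expect the proof to be short once the counts are pinned down.
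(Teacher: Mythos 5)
Your proposal is correct and follows essentially the same route the paper intends: the paper omits the detailed proof, stating it parallels the cubic case, and your argument is exactly the quartic transcription of Lemma~\ref{lem:dsc3+} (surjectivity of $\rot_h$ from Assumption CF via $\uG{}^3_{h0}\subset\uG{}^{3+}_{h0}$, the Euler-formula dimension count $\#(\mathcal{X}_h^i)+5\#(\mathcal{E}_h^i)+6\#(\mathcal{T}_h)-1=6\#(\mathcal{E}_h^i)+5\#(\mathcal{T}_h)$, and the elementary inclusion $\nabla_h B_{h0}^{4+}\subset\{\upsi{}_h\in\uG{}_{h0}^{3+}:\rot_h\upsi{}_h=0\}$). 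The dimension bookkeeping you flag as the main risk does in fact close, reducing to the same identity $\#(\mathcal{X}_h^i)+\#(\mathcal{T}_h)-1=\#(\mathcal{E}_h^i)$ used in the cubic case.
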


\begin{lemma}
$B_{h0}^4=\{w_h\in A^4_{h0}:\nabla_hw_h\in \uG{}^4_{h0}\}$.
\end{lemma}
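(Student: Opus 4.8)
The plan is to follow the cubic argument of Lemma~\ref{eq:imp3} line for line, with the triple $(B^3_{h0},A^3_{h0},\uG{}^2_{h0})$ replaced by $(B^4_{h0},A^4_{h0},\uG{}^4_{h0})$ and Theorem~\ref{thm:exactb3} replaced by the plain quartic discretized Stokes complex
\begin{equation*}
0\longrightarrow B_{h0}^{4}\xrightarrow{\bs{\mrm{\nabla}}_h}\uG{}_{h0}^{3}\xrightarrow{\mrm{rot}_h}\mathbb{P}_{h0}^2\longrightarrow 0,
\end{equation*}
which I would first establish by reducing the $B^{4+}_{h0}$ complex displayed just above this lemma exactly as Theorem~\ref{thm:exactb3} reduces Lemma~\ref{lem:dsc3+}: if $\uv{}_h$ lies in the rot-free part of $\uG{}^3_{h0}$, then $\uv{}_h=\nabla_hw_h$ for some $w_h\in B^{4+}_{h0}$, and $\nabla(w_h|_T)\in\uP{}_3(T)$ forces $w_h|_T\in P_4(T)$, whence $w_h\in B^4_{h0}$. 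Combined with the injectivity of $\nabla_h$ on $A^4_{h0}$ --- a vanishing broken gradient leaves a piecewise constant, which vertex continuity renders globally constant and the homogeneous vertex data of $A^4_{h0}$ renders zero --- these are the two structural facts on which the proof rests.

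For the inclusion $B^4_{h0}\subseteq\{w_h\in A^4_{h0}:\nabla_hw_h\in\uG{}^4_{h0}\}$, one reads off from the definitions that the vertex and edge constraints defining $B^4_{h0}$ subsume those defining $A^4_{h0}$, so $B^4_{h0}\subseteq A^4_{h0}$, and it remains to verify $\nabla_hw_h\in\uG{}^4_{h0}$, i.e. that across every $e\in\mathcal{E}_h^i$ and on every $e\in\mathcal{E}_h^b$ the edge-moments of both components of $\nabla_hw_h$ prescribed by $\uG{}^4_{h0}$ vanish. The normal component is governed directly by the moment constraints placed on $\llt\partial_{\bf n}w_h\rrt$ in the definition of $B^4_{h0}$; the tangential component is treated by integrating $\int_eq_e\llt\partial_{\mathbf t}w_h\rrt$ by parts along $e$, after which the endpoint terms drop out by vertex continuity and the surviving integral becomes a moment of $\llt w_h\rrt$ controlled by the jump constraints of $B^4_{h0}$. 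Reconciling the exact degrees of the test polynomials $q_e$ admissible for $\uG{}^4_{h0}$ with the degrees of the moment constraints built into $B^4_{h0}$ is the delicate point, and is exactly the step I expect to be the main obstacle.

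For the reverse inclusion, let $w_h\in A^4_{h0}$ with $\nabla_hw_h\in\uG{}^4_{h0}$. Since $\nabla_hw_h$ is piecewise in $\uP{}_3$, membership in $\uG{}^4_{h0}$ entails in particular membership in $\uG{}^3_{h0}$, and $\nabla_hw_h$ is broken-rot-free as a broken gradient; the quartic complex of the first paragraph therefore yields $w_h'\in B^4_{h0}$ with $\nabla_hw_h'=\nabla_hw_h$. By the forward inclusion $w_h'\in A^4_{h0}$, so the injectivity of $\nabla_h$ on $A^4_{h0}$ forces $w_h=w_h'\in B^4_{h0}$, closing the argument. The whole identity thus reduces to the forward moment-continuity check, and it is the edge-by-edge degree matching there --- reconciling the orders of the normal- and tangential-derivative moments of $w_h$ with the order of the edge moments encoded in $\uG{}^4_{h0}$ --- where I would concentrate the effort.
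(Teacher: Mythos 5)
Your route is exactly the one the paper intends: the quartic proofs are omitted with the remark that they parallel the cubic ones, and your plan --- reduce the enriched complex to the plain quartic complex just as Theorem~\ref{thm:exactb3} reduces Lemma~\ref{lem:dsc3+} (using Remark~\ref{rem:strB4} for the local-degree reduction $w_h|_T\in P_4(T)$), then repeat the short argument of Lemma~\ref{eq:imp3} via injectivity of $\nabla_h$ on $A^4_{h0}$ --- is precisely the cubic proof transplanted. Your reverse inclusion is sound as written, since there you only use that membership in $\uG{}^4_{h0}$ implies the $P_2(e)$ moment conditions. One point to make explicit: the surjectivity $\nabla_h B^{4+}_{h0}\supset\{\uv{}_h\in\uG{}^{3+}_{h0}:\rot_h\uv{}_h=0\}$ rests on $\rot_h\uG{}^{3+}_{h0}=\mathbb{P}^2_{h0}$, i.e.\ on Assumption CF, which is in force throughout this section but should be cited.

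However, the ``delicate degree matching'' you flag in the forward inclusion is not a step you can close --- it is a typo in the statement, and your hesitation is exactly where the literal claim breaks. For $w_h\in B^4_{h0}$ one has $\llt\partial_{\bf n}w_h\rrt|_e\in P_3(e)$, a four-dimensional space, while the definition of $B^4_{h0}$ annihilates only its moments against $P_2(e)$; membership of $\nabla_h w_h$ in $\uG{}^4_{h0}$ would require vanishing moments against all of $P_3(e)$, one condition per edge more than $B^4_{h0}$ imposes, so $\nabla_h B^4_{h0}\not\subset\uG{}^4_{h0}$ in general and the stated equality fails. The superscript should be $3$: $B^4_{h0}=\{w_h\in A^4_{h0}:\nabla_h w_h\in\uG{}^3_{h0}\}$, consistent with the cubic pattern of Lemma~\ref{eq:imp3} (where $B^3_{h0}$ pairs with $\uG{}^2_{h0}$), with the quartic complex $B^4_{h0}\xrightarrow{\nabla_h}\uG{}^3_{h0}\xrightarrow{\rot_h}\mathbb{P}^2_{h0}$, and with the implementation lemma, whose Stokes solve lives in $\uG{}^3_{h0}\times\mathbb{P}^2_{h0}$. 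With this correction your forward inclusion closes exactly as you sketched: the normal moments against $P_2(e)$ are the constraints of $B^4_{h0}$ verbatim, and for the tangential part, integrating $\int_e q_e\llt\partial_{\mathbf t}w_h\rrt$ by parts with $q_e\in P_2(e)$ leaves endpoint terms killed by vertex continuity and the integral $\int_e(\partial_{\mathbf t}q_e)\llt w_h\rrt$ with $\partial_{\mathbf t}q_e\in P_1(e)$, killed by the $P_1(e)$ jump moments built into $B^4_{h0}$. So: diagnose the superscript as an error, fix it, and your proposal becomes the paper's proof.
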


\begin{remark}\label{rem:strB4}
Evidently, $B_{h0}^4=\{w_h\in B_{h0}^{4+}:w_h|_T\in P_4(T),\ \forall\,T\in\mathcal{T}_h\}$, and $\displaystyle B_{h0}^{4}=\{v\in A^4_{h0}, \ \int_ep_e\llt\partial_{\bf n}v\rrt=0,\ \forall\, p_e\in \tilde{P}_2(e), \forall\,e\in\mathcal{E}_h\},$
where $\tilde{P}_2(e)$ consists of polynomials in $P_2(e)$ with vanishing integration.
\end{remark}

\begin{theorem}
A discretized Stokes complex holds as below:
\begin{equation}
\begin{array}{ccccccccc}
0 & \longrightarrow & B_{h0}^{4} & \xrightarrow{\bs{\mrm{\nabla}}_h} & \uG{}_{h0}^{3} & \xrightarrow{\mrm{rot}_h} & \mathbb{P}_{h0}^2  & \longrightarrow & 0.
\end{array}
\end{equation}
\end{theorem}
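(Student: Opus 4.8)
The plan is to follow verbatim the strategy of the proof of Theorem~\ref{thm:exactb3}, replacing the cubic ingredients by their quartic counterparts. Exactness at the right end, namely $\rot_h\uG{}^3_{h0}=\mathbb{P}^2_{h0}$, is precisely Assumption CF, so it may be invoked directly. Exactness at the left end, i.e.\ the injectivity of $\nabla_h$ on $B^4_{h0}$, is elementary: a function in the kernel is piecewise constant, hence globally constant by the vertex-continuity built into $B^4_{h}$, and then identically zero because it vanishes at the boundary vertices. It therefore remains to establish the middle exactness, namely $\nabla_hB^4_{h0}=\uG{}^3_{h0}(\rot_h,0):=\{\uv{}_h\in\uG{}^3_{h0}:\rot_h\uv{}_h=0\}$.

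I would first dispatch the inclusion $\nabla_hB^4_{h0}\subset\uG{}^3_{h0}(\rot_h,0)$. Cellwise gradients are obviously $\rot_h$-free, so the only point to check is that $\nabla_hw_h\in\uG{}^3_{h0}$ for every $w_h\in B^4_{h0}$; that is, the moments of $\llbracket\nabla_hw_h\rrbracket_e$ against $P_2(e)$ vanish on interior edges, and the analogous boundary moments vanish. The normal component is immediate from the definition of $B^4_{h0}$, which imposes $\int_eq_e\llbracket\partial_{\mathbf n}w_h\rrbracket=0$ for $q_e\in P_2(e)$. For the tangential component I would integrate by parts along each edge: $\int_ep_e\llbracket\partial_{\mathbf t}w_h\rrbracket$ becomes boundary terms at the two endpoints, which cancel by vertex continuity, plus $-\int_e(\partial_{\mathbf t}p_e)\llbracket w_h\rrbracket$ with $\partial_{\mathbf t}p_e\in P_1(e)$, which vanishes by the $P_1$-moment continuity of $B^4_{h0}$. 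The boundary edges are handled identically, using $w_h(a)=0$ at boundary vertices.

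The substance of the theorem is the reverse inclusion, obtained through the enriched spaces exactly as Theorem~\ref{thm:exactb3} used Lemma~\ref{lem:dsc3+}. Given $\uv{}_h\in\uG{}^3_{h0}\subset\uG{}^{3+}_{h0}$ with $\rot_h\uv{}_h=0$, the auxiliary enriched complex $0\to B^{4+}_{h0}\xrightarrow{\nabla_h}\uG{}^{3+}_{h0}\xrightarrow{\rot_h}\mathbb{P}^2_{h0}\to 0$ furnishes $w_h\in B^{4+}_{h0}$ with $\nabla_hw_h=\uv{}_h$. The crux, and the main obstacle, is the degree-reduction step: since $\uv{}_h|_T\in\uP{}_3(T)$ carries no enrichment, $\nabla(w_h|_T)$ has degree at most three, and I must deduce $w_h|_T\in P_4(T)$, i.e.\ that the degree-six enrichment bubbles $(\lambda_i^2\lambda_{i+1}-\lambda_i\lambda_{i+1}^2)\Lambda$ do not occur. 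This is genuinely harder than in the cubic case, where the enrichment $\Lambda P_1(T)$ was annihilated almost trivially. Writing $w_h|_T=p_4+\sum_ic_i(\lambda_i^2\lambda_{i+1}-\lambda_i\lambda_{i+1}^2)\Lambda$ with $p_4\in P_4(T)$, the requirement $\nabla(w_h|_T)\in\uP{}_3(T)$ forces $\sum_ic_i(\lambda_i^2\lambda_{i+1}-\lambda_i\lambda_{i+1}^2)\Lambda\in P_4(T)$; in particular its degree-six part must vanish. Factoring out the common cubic leading factor coming from $\Lambda$, this reduces to the linear independence of the leading parts of $\lambda_i\lambda_{i+1}(\lambda_i-\lambda_{i+1})$, $i=1,2,3$, a short homogeneous-polynomial computation. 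Hence all $c_i=0$ and $w_h|_T\in P_4(T)$, so $w_h\in B^4_{h0}$ by Remark~\ref{rem:strB4}. This gives $\nabla_hB^4_{h0}\supset\uG{}^3_{h0}(\rot_h,0)$, which together with the first inclusion and Assumption CF yields the asserted exact complex.
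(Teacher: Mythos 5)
Your proposal is correct and follows exactly the route the paper intends: the paper omits this proof, presenting the theorem as the quartic analogue of Theorem~\ref{thm:exactb3}, and your argument is precisely that analogue --- Assumption CF for the surjectivity of $\rot_h$, the evident inclusion $\nabla_hB^4_{h0}\subset\uG{}^3_{h0}(\rot_h,0)$ via the normal/tangential moment computation, and the lift through the enriched complex followed by degree reduction. The only remark is that your ``crux'' is simpler than you make it: once $\nabla(w_h|_T)\in\uP{}_3(T)$, integration along segments from a fixed point gives $w_h|_T\in P_4(T)$ directly (exactly as the paper argues in the cubic case), so the linear-independence analysis of the leading parts of the enrichment bubbles, while correct, is not needed.
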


\begin{lemma}\label{lem:approxb4}
There exists a constant $C$, such that for any $w\in H^2_0(\Omega)\cap H^4(\Omega)$, there exists $w_h\in B^3_{h0}$ and 
$$
\|\nabla_h^2(w-w_h)\|_{0,\Omega}\leqslant \frac{Ch^3}{\mathcal{C}_h^{\rm CF}}|w|_{5,\Omega};
$$
moreover, if $\Omega$ is convex, then $w_h$ can be chosen such that 
$$
\|\nabla_h(w-w_h)\|_{0,\Omega}\leqslant \frac{Ch^4}{\mathcal{C}_h^{\rm CF}}|w|_{5,\Omega}.
$$
\end{lemma}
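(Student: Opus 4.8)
The plan is to follow the argument of Lemma~\ref{lem:approxb3} almost verbatim, replacing the uniformly stable pair $\uG{}^2_{h0}$--$\mathbb{P}^1_{h0}$ by the cubic Crouzeix--Falk pair $\uG{}^3_{h0}$--$\mathbb{P}^2_{h0}$, whose surjectivity is exactly what \textbf{Assumption CF} (quantified by $\mathcal{C}_h^{\rm CF}$) provides. First I would set $\uphi:=\nabla w$. Since $w\in H^2_0(\Omega)\cap H^5(\Omega)$, one has $\uphi\in\uH{}^1_0(\Omega)\cap\uH{}^4(\Omega)$ and $\rot\uphi=\rot\nabla w=0$, so $\uphi$ is the velocity solution of a rotated incompressible Stokes problem whose pressure vanishes identically. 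I would then take $\uphi{}_h\in\uG{}^3_{h0}$ to be the velocity component of the discrete mixed solution of this rotated Stokes problem posed on the pair $\uG{}^3_{h0}$--$\mathbb{P}^2_{h0}$, and produce $w_h\in B^4_{h0}$ with $\nabla_hw_h=\uphi{}_h$.

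Next I would invoke the standard error theory for mixed methods. Because the continuous pressure is zero, the quasi-optimal velocity bound collapses to $\|\nabla_h(\uphi-\uphi{}_h)\|_{0,\Omega}\leqslant \frac{C}{\mathcal{C}_h^{\rm CF}}\inf_{\uv{}_h\in\uG{}^3_{h0}}\|\nabla_h(\uphi-\uv{}_h)\|_{0,\Omega}$, and the piecewise-cubic approximation property of $\uG{}^3_{h0}$ on $\uH{}^4$ data yields $\inf_{\uv{}_h}\|\nabla_h(\uphi-\uv{}_h)\|_{0,\Omega}\leqslant Ch^3|\uphi|_{4,\Omega}=Ch^3|w|_{5,\Omega}$. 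When $\Omega$ is convex, an Aubin--Nitsche duality argument for the rotated Stokes problem upgrades the $L^2$ velocity error to $\|\uphi-\uphi{}_h\|_{0,\Omega}\leqslant \frac{Ch^4}{\mathcal{C}_h^{\rm CF}}|w|_{5,\Omega}$. Once $\uphi{}_h=\nabla_hw_h$ is secured, the identities $\nabla_h^2(w-w_h)=\nabla_h(\uphi-\uphi{}_h)$ and $\nabla_h(w-w_h)=\uphi-\uphi{}_h$ turn these two bounds into the claimed estimates.

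The crux, and the step I expect to be the main obstacle, is showing that $\uphi{}_h$ is \emph{exactly} rot-free, i.e. $\rot_h\uphi{}_h=0$, so that it lies in $\nabla_hB^4_{h0}$ by the discretized Stokes complex proved above (the quartic counterpart of Theorem~\ref{thm:exactb3}). The discrete incompressibility constraint only delivers $(\rot_h\uphi{}_h,q_h)=(\rot\uphi,q_h)=0$ for all $q_h\in\mathbb{P}^2_{h0}$. To close the gap I would argue that $\rot_h\uphi{}_h$ itself belongs to $\mathbb{P}^2_{h0}$: it is piecewise $P_2$ because the enrichment of $\uP{}_T^{3+}$ consists of gradients and is therefore annihilated by $\rot$, and it has vanishing mean on $\Omega$ because the tangential-moment continuity and the boundary conditions built into $\uG{}^3_{h0}$ force $\sum_{T}\int_{\partial T}\uphi{}_h\cdot\mathbf{t}=0$. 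Testing the orthogonality relation with $q_h=\rot_h\uphi{}_h\in\mathbb{P}^2_{h0}$ then yields $\rot_h\uphi{}_h=0$. Unlike the cubic case, where Lemma~\ref{lem:pGL} supplies an $h$-independent inf-sup constant, here the Crouzeix--Falk pair need not be uniformly stable, so care is needed to carry the explicit factor $1/\mathcal{C}_h^{\rm CF}$ through the mixed-method estimate; this is precisely the source of the $\mathcal{C}_h^{\rm CF}$ dependence appearing in the statement.
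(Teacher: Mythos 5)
Your proposal is correct and follows essentially the same route as the paper, which omits this proof and simply indicates that it parallels Lemma \ref{lem:approxb3}: set $\uphi=\nabla w$, approximate it by the discrete velocity of the rotated Stokes problem on the $\uG{}^3_{h0}$--$\mathbb{P}^2_{h0}$ pair, and lift back through the discretized Stokes complex to obtain $w_h\in B^4_{h0}$ with $\nabla_h w_h=\uphi{}_h$. Your additional verifications --- that $\rot_h\uphi{}_h$ lies in $\mathbb{P}^2_{h0}$ (piecewise $P_2$ with zero total integral by the edge-moment continuity) and hence vanishes exactly, and that the factor $1/\mathcal{C}_h^{\rm CF}$ enters through the quasi-optimality constant of the mixed method --- are details the paper leaves implicit but are exactly what the intended argument requires.
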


\begin{lemma}\label{lem:resquartic}
Recall $\mathcal{R}_h^1$ and $\mathcal{R}_h^2$ defined in \eqref{eq:res1def} and \eqref{eq:res2def}. There is a constant $C$, such that 
\begin{equation}\label{eq:res1q}
\displaystyle\mathcal{R}_h^1(\varphi,w_h)\leqslant Ch^k|\varphi|_{2+k,\Omega}\|\nabla_h^2w_h\|_{0,\Omega},\ \ \forall\,\varphi\in H^2_0(\Omega)\cap H^{k+2}(\Omega),\ w_h\in B^{4+}_{h0}+H^2_0(\Omega),\ k=1,2,3.
\end{equation}
\begin{equation}\label{eq:res2q}
\displaystyle\mathcal{R}_h^2(\varphi,w_h)\leqslant Ch^k|\varphi|_{k+2,\Omega}\|\nabla_h^2w_h\|_{0,\Omega},\ \ \forall\,\varphi\in H_2^0(\Omega)\cap H^{k+2}(\Omega),\ w_h\in B^{4+}_{h0}+H^2_0(\Omega),\ k=2,3.
\end{equation}
\end{lemma}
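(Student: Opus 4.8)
The plan is to follow the template of Lemma \ref{lem:res} essentially verbatim, upgrading every ``degree one'' to ``degree two'' wherever the richer continuity of $B_{h0}^{4+}$ permits it. The first step is to record the weak continuity of the gradient. By the very definition of $B_{h0}^{4+}$ one has $\fint_e q_e\llt\partial_{\mathbf n}w_h\rrt=0$ for every $q_e\in P_2(e)$ and every edge. For the tangential component I would integrate by parts along $e$: since $w_h$ is continuous at the two endpoints of $e$ and $\fint_e p_e\llt w_h\rrt=0$ for $p_e\in P_1(e)$, the endpoint terms vanish and $\partial_{\mathbf t}q_e\in P_1(e)$ is annihilated by the moment condition, so $\fint_e q_e\llt\partial_{\mathbf t}w_h\rrt=0$ for $q_e\in P_2(e)$. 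Combining the two directions yields the quartic analogue of \eqref{eq:momentc3+}, namely
\begin{equation}
\fint_e q_e\llt\nabla w_h\rrt=\undertilde{0},\quad\forall\,q_e\in P_2(e),\ \forall\,e\in\mathcal{E}_h,
\end{equation}
valid for every $w_h\in B_{h0}^{4+}+H^2_0(\Omega)$.

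For \eqref{eq:res1q} I would integrate $\mathcal{R}_h^1$ by parts cell by cell, so that the volume contributions cancel exactly by the definition of $\mathcal{R}_h^1$, leaving $\mathcal{R}_h^1(\varphi,w_h)=\sum_e\fint_e(\nabla^2\varphi\,\mathbf n_e)\cdot\llt\nabla w_h\rrt_e$. Writing $g:=\nabla^2\varphi\,\mathbf n_e$ and letting $P_e$ denote the componentwise $L^2(e)$-projection onto $P_2(e)$, the weak continuity above lets me replace $g$ by $g-P_eg$ at no cost. A Cauchy--Schwarz on each edge, a trace plus Bramble--Hilbert estimate $\|g-P_eg\|_{0,e}\lesssim h^{k-1/2}|\varphi|_{k+2,\omega_e}$ for $1\le k\le 3$, and the zero-mean-jump bound $\|\llt\nabla w_h\rrt\|_{0,e}\lesssim h^{1/2}\|\nabla_h^2w_h\|_{0,\omega_e}$ then combine to give \eqref{eq:res1q}; the ceiling $k=3$ is precisely the reproduction order of $P_2(e)$.

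The estimate \eqref{eq:res2q} is where the new ingredient enters. I would introduce the cubic Lagrange interpolation $\Pi_h^3$ onto $\mathcal{L}^3_{h0}$ fixed by the vertex values, the two edge moments $\fint_e p_e(\cdot)$ with $p_e\in P_1(e)$, and the cell integral $\fint_T(\cdot)$; on $B_{h0}^{4+}+H^2_0(\Omega)$ these are all single-valued, so $\Pi_h^3$ is well defined and $\Pi_h^3w_h\in H^1_0(\Omega)$. The purpose of this choice is the orthogonality $\fint_T\undertilde{p}\cdot\nabla(w-\Pi_h^3w)=0$ for all $\undertilde{p}\in\uP{}_1(T)$: integrating by parts on $T$, the volume term drops because $\dv\undertilde{p}$ is constant and the cell integrals match, while the boundary term drops because $\undertilde{p}\cdot\mathbf n_e\in P_1(e)$ and the edge moments match. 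Using $(\Delta^2\varphi,\Pi_h^3w_h)=-(\nabla\Delta\varphi,\nabla\Pi_h^3w_h)$ I then split $\mathcal{R}_h^2=(\nabla\Delta\varphi,\nabla_h(w_h-\Pi_h^3w_h))+(\Delta^2\varphi,w_h-\Pi_h^3w_h)=:I_1+I_2$. In $I_1$ I subtract the best $\uP{}_1(T)$ approximation of $\nabla\Delta\varphi$ (permitted by the orthogonality) and use $\|\nabla_h(w_h-\Pi_h^3w_h)\|_{0,T}\lesssim h\,\|\nabla^2w_h\|_{0,T}$, yielding $h^{k-1}\cdot h=h^k$ for $k=2,3$; in $I_2$ I subtract the cell mean of $\Delta^2\varphi$ (permitted by the cell-integral matching) and use $\|w_h-\Pi_h^3w_h\|_{0,T}\lesssim h^2\|\nabla^2w_h\|_{0,T}$, again producing $h^k|\varphi|_{k+2}$ for $k=2,3$.

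The main obstacle is the degree bookkeeping in this third step: the full gain of one order at $k=3$ is realized only if $\Pi_h^3$ carries \emph{both} the $P_1(e)$ edge moments \emph{and} the cell integral, so that $I_1$ may subtract a linear (not merely constant) part of $\nabla\Delta\varphi$ and $I_2$ may subtract a constant part of $\Delta^2\varphi$. I must therefore verify carefully that these are exactly the degrees of freedom that are single-valued on $B_{h0}^{4+}$ and that cubic Lagrange has the matching count ($3$ vertex, $6$ edge, $1$ interior). The remaining estimates---the trace/Bramble--Hilbert bounds and the zero-mean-jump inequality---are standard and entirely parallel to those behind Lemma \ref{lem:res}.
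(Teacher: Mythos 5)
Your proposal is correct and follows essentially the same route as the paper: the same weak gradient continuity with $P_2(e)$ moments, the same cubic interpolant $\Pi_h^3$ (vertex values, $P_1(e)$ edge moments, cell integral) with orthogonality against $\uP{}_1(T)$, and the same $I_1+I_2$ splitting with subtraction of local polynomial approximants. The paper compresses these steps by referring back to the proof of Lemma \ref{lem:res}; you have simply written out the details it omits, with the degree bookkeeping done correctly.
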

\begin{proof}
Firstly, we can obtain for $w_h\in B^{4+}_{h0}$ that
\begin{equation}\label{eq:momentc4+}
\fint_{e}p_e\llbracket\nabla w_h\rrbracket_e=\undertilde{0},\ \forall\, p_e\in P_2(e),\ \ e\in\mathcal{E}_h,
\end{equation}

Secondly, define $\Pi_h^3$ the nodal interpolation to $\mathcal{L}_{h0}^3$ by
$$
(\Pi_h^3w)(a)=w(a),\ \forall\,a\in\mathcal{X}_h^i;\  \fint_ep_e(\Pi_h^3w)=\fint_ep_ew,\ \forall\,e\in\mathcal{E}_h^i,\ p_e\in P_1(e);\ \fint_T\Pi_h^3w=\fint_T w,\ \forall\,T\in\mathcal{T}_h.
$$
It is easy to verify the operator is well-defined. Moreover,
\begin{equation}\label{eq:orthpi3}
\fint_T\undertilde{c}\cdot\nabla (w-\Pi_h^2w)=0,\ \ \forall\,\undertilde{c}\in\uP{}_1(T)\ \mbox{and}\ T\in\mathcal{T}_h, \ \mbox{provided}\ w\in B^{4+}_{h0}.
\end{equation}

The remaining of the proof is the same as that of Lemma \ref{lem:res}.
\end{proof}

\begin{remark}
As $\dim(B^4_{h0})=\dim(\{\uw{}_h\in\uG{}^3_{h0}:\rot_h\uw{}_h=0\})=3\#(\mathcal{X}^i_{h})+2\#(\mathcal{E}_h^i)-3$, $B^4_{h0}$ on a triangulation with no interior vertex and two interior edges is 1-dimensional. Thus the supports of the basis of $B^4_{h0}$ can be with more possibilities than that of the basis of $B^3_{h0}$, and we do not seek to present a detailed description in the present paper. 
\end{remark}

\subsection{On the implementation and optimal solver}
Again, the discretization scheme \eqref{eq:bhvpp4} can be implemented by solving two Poisson systems and a Stokes system. 

\begin{lemma}
Let $u_h^*$ be obtained by following procedure: 
\begin{enumerate}
\item find $r_h\in A^4_{h0}$, such that 
$$
(\nabla_hr_h,\nabla_hs_h)=(f,s_h),\quad\forall\,s_h\in A^4_{h0};
$$
\item find $(\uphi{}_h,p_h)\in \uG{}^3_{h0}\times\mathbb{P}^2_{h0}$, such that 
\begin{equation*}
\left\{
\begin{array}{ll}
(\nabla_h\uphi{}_h,\nabla_h\upsi{}_h)+(p_h,\rot_h\upsi{}_h)=(\nabla_hr_h,\upsi{}_h)&\forall\,\upsi{}_h\in\uG{}^3_{h0}
\\
(q_h,\rot_h\uphi{}_h)=0&\forall\,q_h\in \mathbb{P}^2_{h0};
\end{array}
\right.
\end{equation*}
\item find $u_h^*\in A^4_{h0}$, such that 
$$
(\nabla_hu_h^*,\nabla_hv_h^*)=(\uphi{}_h,\nabla_hv_h^*),\quad\forall v_h^*\in A^4_{h0}.
$$
\end{enumerate}
Let $u_h$ be the solution of \eqref{eq:bhvpp4}. Then $u_h^*=u_h$.
\end{lemma}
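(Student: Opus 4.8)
The plan is to reproduce, \emph{mutatis mutandis}, the argument behind the cubic solver, reading the three subsystems as: (i) a discrete Poisson solve that represents the load functional $(f,\cdot)$ on $A^4_{h0}$; (ii) a rotated Stokes solve that recovers the cellwise gradient $\nabla_h u_h$; and (iii) a second discrete Poisson solve that reconstructs $u_h$ from that gradient. The two structural facts I would lean on are the quartic discretized Stokes complex $0\to B^4_{h0}\xrightarrow{\nabla_h}\uG{}^3_{h0}\xrightarrow{\rot_h}\mathbb{P}^2_{h0}\to 0$ and the characterization $B^4_{h0}=\{w_h\in A^4_{h0}:\nabla_hw_h\in\uG{}^3_{h0}\}$, i.e. the quartic analogues of Theorem~\ref{thm:exactb3} and Lemma~\ref{eq:imp3}; throughout, well-posedness of the intermediate saddle point is supplied by \textbf{Assumption CF}.

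Concretely, let $\uphi{}_h:=\nabla_h u_h$, where $u_h\in B^4_{h0}$ solves \eqref{eq:bhvpp4}. Since $u_h\in A^4_{h0}$, step (i) gives $(f,v_h)=(\nabla_h r_h,\nabla_h v_h)$ for every $v_h\in B^4_{h0}$; writing $\upsi{}_h:=\nabla_h v_h$, the discretization \eqref{eq:bhvpp4} becomes $(\nabla_h\uphi{}_h,\nabla_h\upsi{}_h)=(\nabla_h r_h,\upsi{}_h)$ for all $\upsi{}_h$ in the range $\nabla_h B^4_{h0}$. By exactness of the complex at $\uG{}^3_{h0}$ this range is exactly $\{\upsi{}_h\in\uG{}^3_{h0}:\rot_h\upsi{}_h=0\}$, so $\uphi{}_h$ is the unique minimizer of $\upsi{}_h\mapsto\tfrac12\|\nabla_h\upsi{}_h\|_{0,\Omega}^2-(\nabla_h r_h,\upsi{}_h)$ over that kernel. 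I would then recognize this constrained minimizer as the velocity component of the saddle point in step (ii): tested against kernel functions the first equation reduces to the minimization characterization (the pressure term drops because $\rot_h\upsi{}_h=0$), while the constraint equation $(q_h,\rot_h\uphi{}_h)=0$ holds trivially since $\rot_h\nabla_h u_h=0$ cellwise. Hence $\uphi{}_h=\nabla_h u_h$ is the $\uG{}^3_{h0}$-component of the step-(ii) solution. Finally, both $u_h$ and $u_h^*$ lie in $A^4_{h0}$ and satisfy the step-(iii) Poisson problem with datum $\uphi{}_h$; since $\nabla_h$ is injective on $A^4_{h0}$, that problem is uniquely solvable and $u_h^*=u_h$.

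Steps (i) and (iii) are routine once one observes that $\nabla_h$ is injective on $A^4_{h0}$ --- a cellwise-constant gradient forces $w_h$ to be globally constant by vertex continuity, hence zero under the boundary conditions --- so $(\nabla_h\cdot,\nabla_h\cdot)$ is an inner product and the two Poisson problems are well-posed. The real work is in step (ii): I expect the main obstacle to be verifying that the rotated Stokes system is well-posed and that its velocity component is genuinely $\nabla_h u_h$ rather than some other kernel element. Well-posedness requires coercivity of $(\nabla_h\cdot,\nabla_h\cdot)$ on $\ker\rot_h$, which follows from the discrete Poincar\'e--Friedrichs inequality on $\uG{}^3_{h0}$, together with the inf-sup condition for the pair $\uG{}^3_{h0}$--$\mathbb{P}^2_{h0}$; the latter is precisely the surjectivity $\rot_h\uG{}^3_{h0}=\mathbb{P}^2_{h0}$ assumed in \textbf{Assumption CF}, which is also what guarantees that the multiplier $p_h$ is uniquely determined. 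Given these, uniqueness of the constrained minimizer closes the identification, and the chain $u_h\mapsto\nabla_h u_h\mapsto u_h$ threaded through the complex yields $u_h^*=u_h$.
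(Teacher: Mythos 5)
Your argument is correct and follows exactly the route the paper intends: the paper states this lemma without a written proof, remarking only (in the cubic case) that it follows from the discretized Stokes complex and the characterization of $B^k_{h0}$ inside $A^k_{h0}$, and your proposal supplies precisely those details — threading $u_h\mapsto\nabla_hu_h\mapsto u_h$ through the exact sequence, identifying the constrained minimizer with the Stokes velocity, and closing with injectivity of $\nabla_h$ on $A^4_{h0}$. Your attention to where \textbf{Assumption CF} is actually needed (existence of the multiplier $p_h$, not the identification of the velocity) is a point the paper leaves implicit.
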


\section{Concluding remarks}
\label{sec:conc}

In this paper, piecewise cubic and piecewise quartic element spaces for biharmonic equation are constructed with optimal convergence rate with respect to energy norm. The schemes can be implemented and the generated systems can be optimally solved in quite a friendly way. Together with the optimal quadratic element space (Morley element) and optimal high-degree (not smaller than 5) $\mathcal{C}^1$ spaces, these form a complete answer to the question whether and how arbitrary order optimal finite element schemes can be constructed for planar biharmonic equation. The schemes may be expected to find practical applications in the computation of both source and eigenvalue problems. 

~\\

A basic tool is the discretized Stokes complexes in this paper, and the role of the Stokes complex that it is a tool for studying biharmonic equation is illustrated. The optimality of the schemes constructed is relevant to the stability of the corresponding Stokes element pairs; the Fortin-Soulie pair is stable on general grids, and there remains no more than a tiny gap if the Crouziex-Falk pair does. Besides, similar to \cite{Zhang.S2017axv} where a discretized Stokes complex on quadrilateral grids (\cite{Zhang.S2016nm}) is used for aid, in this paper, an auxiliary discretized Stokes complex with locally defined finite element spaces is constructed for the discretized Stokes complex with spaces defined globally. These routine techniques for deriving exact sequences may find more usages in future, especially usage in counting the dimension of certain spaces. Moreover, in contrast to general practice, the convergence analysis of finite element schemes is illustrated to be useful for the proof of approximation for finite element space to certain Sobolev spaces in this paper. This is why, though the piecewise cubic element space admits a space of locally supported basis functions and a procedure how to establish such a basis is given in the paper, we do not seek to establish the theory based upon interpolation. In the future, how the basis functions can be used for the implementation of the scheme, and how the approximation of the space can be estimated in a more direct way will be discussed. 
~\\

The schemes constructed in the present paper can be viewed as a generalization of the Morley element to higher-degree polynomials. Actually, they three, namely the Morley element space, the spaces $B^3_h$ and the spaces $B_h^4$, belong to a same family which reads in two dimension:
\begin{multline}
B_h^k:=\{w_h\in L^2(\Omega):w_h(a)\ \mbox{is\ continuous\ at}\ a\in\mathcal{X}_h;
\\ 
\fint_e\llt w_h\rrt p_e=0, \ \forall\,p_e\in P_{k-3}(e), \fint_eq_e\llt\partial_{\bf n}w_h\rrt=0,\ \forall\,p_e\in P_{k-2}(e),\ \forall\,   e\in\mathcal{E}_h^i\}.
\end{multline}
The space $B_{h0}^k\subset B_h^k$ can be defined corresponding to the boundary condition of $H^2_0(\Omega)$. It has been proved that $B_{h(0)}^k$ is an optimally consistent finite element space for biharmonic equation for $k=2,3$ for arbitrary triangulations and for $k=4$ for most reasonable triangulations. Can the family work optimally with arbitrary $k\geqslant 2$ and can it be generalized to higher dimension and even higher orders? This can be a question of interests and be studied in future. Structural properties relevant to the spaces can be studied together. 
~\\

By Remarks \ref{rem:strB3dg} and \ref{rem:strB4}, $B^3_{h0}$ and $B^4_{h0}$ can be viewed as restricted $A^3_{h0}$ and $A^4_{h0}$  with respect to continuity  on derivatives. This way the schemes can be implemented like discontinuous Galerkin methods, where interior penalties are replaced by continuity restrictions. This hints the possible asymptotic connection between the solutions of these scheme and the solutions of discontinuous Galerkin schemes, which will be studied in future.

%\bibliographystyle{plain}
%\bibliography{../zhang_bib}

\end{document}